\documentclass[11pt,reqno,a4paper]{amsart}

\usepackage{amsmath,amssymb,amsthm,mathtools}
\usepackage{bm}
\usepackage[colorlinks=true,citecolor=blue,linkcolor=blue,urlcolor=blue]{hyperref}
\usepackage{enumerate}
\usepackage{graphicx}
\usepackage[margin=1in]{geometry}

\theoremstyle{plain}
\newtheorem{theorem}{Theorem}[section]
\newtheorem{proposition}[theorem]{Proposition}
\newtheorem{lemma}[theorem]{Lemma}
\newtheorem{corollary}[theorem]{Corollary}
\theoremstyle{definition}
\newtheorem{definition}[theorem]{Definition}
\newtheorem{example}[theorem]{Example}

\theoremstyle{remark}
\newtheorem{remark}[theorem]{Remark}

\numberwithin{equation}{section}

\newcommand{\F}{\mathbb{F}}
\newcommand{\R}{\mathbb{R}}
\newcommand{\C}{\mathbb{C}}

\newcommand{\Sym}{\mathrm{Sym}}
\newcommand{\Ker}{\mathrm{Ker}}
\newcommand{\Img}{\mathrm{Im}}
\newcommand{\rank}{\mathrm{rank}}
\newcommand{\Span}{\mathrm{Span}}

\newcommand{\id}{\mathrm{id}}
\newcommand{\Hmacro}{\mathcal{H}_{\mathrm{macro}}}
\newcommand{\Vmacro}{V_{\mathrm{macro}}}
\newcommand{\cmacro}{c_{\mathrm{macro}}}
\newcommand{\Bmacro}{B_{\mathrm{macro}}}
\newcommand{\Zsp}{\mathcal{Z}}
\newcommand{\Lap}{L_{\beta}}
\newcommand{\Hbb}{\mathcal{H}}
\newcommand{\dphi}{\delta_{\phi}}
\newcommand{\hphi}{\hat{\phi}}

\begin{document}

\title[Defect Invariants and Gram Operators]{Defect Spaces and Gram Operators for Tensor-Valued Incidence Maps}

\author{Kengo Miyamoto}
\address{Department of Computer and Information Science, Ibaraki University, Ibaraki, 316-8511, Japan.}
\email{kengo.miyamoto.uz63@vc.ibaraki.ac.jp}

\subjclass[2020]{05C50, 05C65, 15A03, 15A72, 05B35}
\keywords{incidence map, cycle space, defect invariant, tensor algebra, edge Gram operator, oriented hypergraph}

\thanks{This work was supported by JSPS KAKENHI Grant Number 24K16885.}

\begin{abstract}
We study vector-valued incidence maps obtained from ordinary graph incidence maps by linear observation of the free vertex space.
Let $\F$ be a field, $D = (X, E, s, t)$ a finite directed multigraph, $U$ an $\F$-vector space, and $\phi : X \to U$ a vertex labeling with $\F$-linear extension $\hphi : \F^X \to U$.
The vector-valued incidence map $\partial_\phi : \F^E \to U$, $\partial_\phi(\mathbf{1}_e) = \phi(t(e)) - \phi(s(e))$, factors as $\partial_\phi = \hphi \circ B_D$, where $B_D$ is the classical incidence map of $D$. We prove the formula
$$\dim_\F \Ker(\partial_\phi) = |E| - |X| + c(D) + \delta_\phi,$$
where $c(D)$ is the number of weakly connected components of $D$ and $\delta_\phi := \dim_\F(\Img(B_D) \cap \Ker(\hphi))$ is the defect invariant.
We apply this framework to directed tensor-labeled hypergraphs $\Hbb = (Q_0, Q_1, \beta)$, in which each hyperedge carries a pair of boundary tensors $(A_e, B_e)$ in the tensor algebra $T(\F^{Q_0})$, and prove that $\delta(\Hbb) = 0$ over any field for each of the six standard constructions, including symmetric encodings that degenerate in positive characteristic.
Over $\F = \R$, the edge Gram operator $\Lap = \partial_\beta^* \partial_\beta$ has rank $|\Vmacro| - \cmacro - \delta(\Hbb)$, and its degree-truncated operators form a Loewner-monotone filtration whose rank increments equal the decrements of the defect filtration. We further realize the cycle space of every oriented hypergraph (in the sense of Reff--Rusnak) as $\Ker(\partial_\beta)$ within this framework, and exhibit a four-edge inclusion--exclusion example with $\delta(\Hbb) = 1$.
\end{abstract}

\maketitle

\section{Introduction}\label{sec:intro}

Let $\F$ be a field and let $D = (X, E, s, t)$ be a finite directed multigraph, that is, $X$ is a finite set of vertices, $E$ is a finite set of edges, and $s, t : E \to X$ assign to each edge $e \in E$ its source $s(e)$ and target $t(e)$. The \emph{incidence map} of $D$ is the $\F$-linear map
$$B_D : \F^E \longrightarrow \F^X, \quad B_D(\mathbf{1}_e) = \mathbf{1}_{t(e)} - \mathbf{1}_{s(e)},$$
whose kernel is the cycle space of $D$. The classical rank formula $\rank_\F B_D = |X| - c(D)$, where $c(D)$ is the number of weakly connected components of $D$, yields
\begin{equation}\label{eq:intro-classical-rank}
\dim_\F \Ker(B_D) = |E| - |X| + c(D).
\end{equation}
The identity \eqref{eq:intro-classical-rank} is a standard point of contact between algebraic graph theory \cite{Biggs, GodsilRoyle}, matroid theory \cite{Welsh, Oxley}, and the cellular homology of one-dimensional CW complexes \cite{Hatcher}.

The starting point of this paper is the following generalization of $B_D$. Let $U$ be an $\F$-vector space and let $\phi : X \to U$ be a vertex labeling. The associated \emph{vector-valued incidence map} is
\begin{equation}\label{eq:edge-diff}
\partial_\phi : \F^E \longrightarrow U, \quad \partial_\phi(\mathbf{1}_e) = \phi(t(e)) - \phi(s(e)).
\end{equation}
Write $\hphi : \F^X \to U$ for the $\F$-linear extension of $\phi$. Since $\partial_\phi = \hphi \circ B_D$, we have $\Ker(\partial_\phi) \supseteq \Ker(B_D)$. The inclusion is strict precisely when $\Img(B_D) \cap \Ker(\hphi) \neq 0$. The following identity quantifies the size of this enlargement and provides the organizing framework of the paper.

\begin{theorem}\label{thm:intro-main}
Let $\F$ be a field, $D = (X, E, s, t)$ a finite directed multigraph, $U$ an  $\F$-vector space, and $\phi : X \to U$ a map. Then
$$\dim_\F \Ker(\partial_\phi) = |E| - |X| + c(D) + \delta_\phi,$$
where $\delta_\phi := \dim_\F\left(\Img(B_D) \cap \Ker(\hphi)\right)$.
\end{theorem}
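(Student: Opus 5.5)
The plan is to apply rank--nullity to the factorization $\partial_\phi = \hphi \circ B_D$ and then feed in the classical rank formula \eqref{eq:intro-classical-rank}. All spaces are finite-dimensional except possibly $U$, but only the domain $\F^E$ and the subspace $\Img(B_D) \subseteq \F^X$ enter the dimension count, so this causes no trouble.

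First, I identify the kernel of $\partial_\phi$ as a preimage:
$$\Ker(\partial_\phi) = B_D^{-1}\bigl(\Ker(\hphi)\bigr) = B_D^{-1}\bigl(\Img(B_D)\cap \Ker(\hphi)\bigr).$$
The second equality holds because $B_D$ only takes values in $\Img(B_D)$.

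Next, I restrict $B_D$ to the subspace $\Ker(\partial_\phi)$. This gives a linear map $\Ker(\partial_\phi) \to \Img(B_D)\cap\Ker(\hphi)$, and it is surjective: any $y$ in the intersection equals $B_D(x)$ for some $x$, and then $\partial_\phi(x) = \hphi(y) = 0$. Its kernel is exactly $\Ker(B_D)$, since $\Ker(B_D) \subseteq \Ker(\partial_\phi)$. Rank--nullity for this restricted map then gives
$$\dim_\F \Ker(\partial_\phi) = \dim_\F \Ker(B_D) + \delta_\phi .$$

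Finally, I substitute $\dim_\F \Ker(B_D) = |E| - |X| + c(D)$ from \eqref{eq:intro-classical-rank}, which yields the stated formula.

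If the classical formula must itself be justified rather than assumed, I would prove $\rank_\F B_D = |X| - c(D)$ as follows. The image $\Img(B_D)$ is contained in the space of vectors whose coordinates sum to zero on each weakly connected component; this space has dimension $|X| - c(D)$. For the reverse inclusion, in each component the differences $\mathbf{1}_v - \mathbf{1}_{v_0}$ are images of signed edge paths from a base vertex $v_0$, and these differences span the zero-sum vectors on that component. This argument is valid over any field.

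None of these steps is a real obstacle. The only point needing care is checking that the restricted map is surjective onto the intersection and has kernel exactly $\Ker(B_D)$, and both checks are immediate from the factorization.
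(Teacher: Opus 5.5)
Your proposal is correct and is essentially the paper's proof. The paper also restricts $B_D$ to $\Ker(\partial_\phi)$, observes that it maps onto $\Img(B_D)\cap\Ker(\hphi)$ with kernel $\Ker(B_D)$ (phrased as a short exact sequence), and then substitutes the classical formula, which it cites from Biggs rather than re-deriving as in your optional sketch.
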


We call $\Delta_\phi := \Img(B_D) \cap \Ker(\hphi)$ the \emph{defect space} of $\phi$, and its dimension $\delta_\phi$ the \emph{defect invariant}. Equivalently, $\delta_\phi = \rank_\F(B_D) - \rank_\F(\partial_\phi)$, so $\delta_\phi$ is the rank drop caused by the passage from the free vertex space $\F^X$ to the label space $U$. When $\hphi$ is injective, $\delta_\phi = 0$ and Theorem~\ref{thm:intro-main} reduces to \eqref{eq:intro-classical-rank}. 
In Section~\ref{sec:general}, we further give a geometric description of $\rank_\F(\partial_\phi)$, an explicit basis of $\Ker(\partial_\phi)$ via spanning forests, and identify $\delta_\phi$ with the nullity of the linear matroid determined by rooted label differences.

The principal application of this framework is to a tensor-valued setting. 
Let $Q_0$ and $Q_1$ be two finite sets whose elements are called \emph{vertices} 
and \emph{hyperedges}, respectively, and let $T(\F^{Q_0})$ be the tensor algebra of the free vector space $\F^{Q_0}$. 
We call a tuple $\Hbb = (Q_0, Q_1, \beta)$, where
$$\beta : \F^{Q_1} \longrightarrow T(\F^{Q_0}) \times T(\F^{Q_0})$$
is an $\F$-linear map, a \emph{directed tensor-labeled hypergraph}. 
Set $\beta(\mathbf{1}_e)=(A_e,B_e)$. 
The associated \emph{tensor-valued incidence map} is the $\F$-linear map
$$\partial_\beta : \F^{Q_1} \longrightarrow T(\F^{Q_0}), \quad \partial_\beta(\mathbf{1}_e):=B_e-A_e,$$
and its kernel $\Zsp(\Hbb)$ is the \emph{tensor cycle space} of $\Hbb$.

The tensor-valued setting reduces to the vector-valued one through an auxiliary multigraph. Put $\Vmacro := \{A_e \mid e \in Q_1\} \cup \{B_e \mid e \in Q_1\} \subset T(\F^{Q_0})$ and form the directed multigraph $\Hmacro$ on $\Vmacro$ with edge set $Q_1$ and assignments $s(e) = A_e$, $t(e) = B_e$. We call $\Hmacro$ the \emph{associated macrograph} of $\Hbb$. 
The evaluation map $\hphi : \F^{\Vmacro} \to T(\F^{Q_0})$, $\mathbf{1}_w \mapsto w$, satisfies $\partial_\beta = \hphi \circ \Bmacro$, where $\Bmacro$ is the incidence map of $\Hmacro$. Theorem~\ref{thm:intro-main}, applied to $\Hmacro$ and $\hphi$, therefore gives
$$\dim_\F \Zsp(\Hbb) = |Q_1| - |\Vmacro| + \cmacro + \delta(\Hbb),$$
where $\cmacro := c(\Hmacro)$ and $\delta(\Hbb) := \dim_\F\left(\Img(\Bmacro) \cap \Ker(\hphi)\right)$.

Within this framework, ordinary directed and undirected graphs, multiset hyperedges, ordered-tuple hyperedges, and their directed analogues arise as six \emph{standard constructions} (Section~\ref{sec:tensor-hg}).

The central vanishing result of the paper is the following.

\begin{theorem}\label{thm:intro-vanishing}
Let $\Hbb$ be a directed tensor-labeled hypergraph in which every hyperedge is given by the same one of the six standard constructions of Section~\ref{sec:tensor-hg}. Then, over any field $\F$, $\delta(\Hbb) = 0$ holds.
\end{theorem}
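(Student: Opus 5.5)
The plan is to reduce the statement to one linear-algebra criterion on the macrovertex set $\Vmacro$, and then check that criterion for each of the six standard constructions of Section~\ref{sec:tensor-hg}. The criterion is the following.

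\emph{Criterion.} Let $D=(X,E,s,t)$ and $\phi:X\to U$ be as in Theorem~\ref{thm:intro-main}. Suppose the nonzero labels $\phi(x)$ are pairwise distinct and linearly independent in $U$, and at most one vertex has label $0$. Then $\delta_\phi=0$.

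To prove the criterion, note that under these hypotheses $\Ker(\hphi)$ is either $0$ or the line $\F\mathbf{1}_{x_0}$, where $x_0$ is the unique vertex with $\phi(x_0)=0$. Next, $\Img(B_D)$ is contained in (indeed equals) the space of vectors whose coordinates sum to zero on each weakly connected component. The vector $\mathbf{1}_{x_0}$ has coordinate sum $1\neq 0$ on its component, so $\mathbf{1}_{x_0}\notin \Img(B_D)$. Hence $\Img(B_D)\cap\Ker(\hphi)=0$.

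For the macrograph $\Hmacro$, the vertices are by definition the distinct tensors $A_e,B_e$, so distinctness is automatic. Since $\Vmacro$ is a set, at most one macrovertex equals $0$. So it suffices to show that the nonzero elements of $\Vmacro$ are linearly independent in $T(\F^{Q_0})$.

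I would prove this by a support argument in the standard basis of $T(\F^{Q_0})$, which consists of the words $x_{v_1}\otimes\cdots\otimes x_{v_k}$. The plan is to go through the six constructions and, for each one, write every boundary tensor as a linear combination of words with a prescribed support:
\begin{itemize}
\item for ordered-tuple and directed-graph encodings, a single word;
\item for multiset and undirected encodings, a combination supported on the $S_k$-orbit of a word, i.e.\ on the words with a fixed multiset of letters.
\end{itemize}
The key observation is that distinct labels occurring in a fixed construction have pairwise disjoint supports. Different words are different basis vectors, and different multisets give disjoint orbits; different degrees land in different graded pieces. A family of nonzero vectors with pairwise disjoint supports in a basis is linearly independent, which finishes each case. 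When a construction places a sum or difference of several such tensors on one side of an edge, I would repeat the same argument with the tensor's full support, checking that distinct labels still have disjoint supports.

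The main obstacle is positive characteristic for the symmetric encodings. If a multiset label is defined as $\sum_{\sigma\in S_k}x_{v_{\sigma(1)}}\otimes\cdots\otimes x_{v_{\sigma(k)}}$, then each word in the orbit appears with coefficient equal to the order of a stabilizer, a product of factorials of multiplicities. This coefficient may vanish in $\F$, making the label zero or changing its support. Two things must be verified here.
\begin{itemize}
\item Whenever such a label is nonzero, its support is still contained in its own orbit, so disjointness of supports, and hence independence, survives.
\item Labels that collapse all collapse to the single tensor $0$, which the criterion tolerates because $\mathbf{1}_0$ never lies in $\Img(\Bmacro)$.
\end{itemize}
Together with Theorem~\ref{thm:intro-main}, this yields $\delta(\Hbb)=0$ over every field.
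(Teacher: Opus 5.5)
Your proposal is correct and follows essentially the same route as the paper. The paper first shows that $\Vmacro\setminus\{0\}$ consists of nonzero tensors with pairwise disjoint supports in the standard basis (Lemma~\ref{lem:Vmacro-nonzero-indep}, via $\Sym_k(v_\mu)=(\prod_v m_v!)\sum_{w\in\mathcal{B}_\mu}w$), and then eliminates the possible coefficient of $\mathbf{1}_0$ by the zero-coordinate-sum characterization of $\Img(\Bmacro)$, exactly as in your criterion. One small correction: the factor $\prod_v m_v!$ is the same for every word in the orbit, so a degenerate symmetric label never merely changes its support; it is either supported on all of $\mathcal{B}_\mu$ or equal to $0$, though your containment argument does not need this.
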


By Theorem~\ref{thm:intro-vanishing}, the defect term in the dimension formula vanishes and $\dim_\F \Zsp(\Hbb)$ is determined by the combinatorial invariants $|Q_1|, |\Vmacro|, \cmacro$ of the associated macrograph over the fixed field $\F$. 
The non-triviality of Theorem~\ref{thm:intro-vanishing} lies in its validity in positive characteristic. 
For example, the symmetrized tensor
$$\Sym_k(v_\mu) := \sum_{\sigma \in S_k} v_{\sigma(1)} \otimes \cdots \otimes v_{\sigma(k)},$$
which encodes a multiset hyperedge of cardinality $k$, may vanish when some vertex multiplicity is at least the characteristic. We show that this degeneration contributes to $\Ker(\Bmacro)$, called the \emph{topological cycle space} $\Zsp_{\mathrm{top}}(\Hbb)$, but not to the \emph{algebraic cycle space} $\Zsp_{\mathrm{alg}}(\Hbb) := \Img(\Bmacro) \cap \Ker(\hphi)$ measured by $\delta(\Hbb)$. We remark that, outside the standard constructions, $\delta(\Hbb) > 0$ can occur. Minimal examples with positive defect are given in Sections~\ref{sec:defect} and~\ref{sec:related}.

Beyond this vanishing, the kernel $\Zsp(\Hbb)$ admits a hierarchy of approximations indexed by tensor degree. For any $\F$-linear map $\rho : T(\F^{Q_0}) \to U'$, called an \emph{observation map}, we put
$$\Zsp_\rho(\Hbb) := \Ker(\rho \circ \partial_\beta), \quad \delta_\rho(\Hbb) := \dim_\F\left(\Img(\Bmacro) \cap \Ker(\rho \circ \hphi)\right).$$
The same defect formalism gives the dimension formula
$$\dim_\F \Zsp_\rho(\Hbb) = |Q_1| - |\Vmacro| + \cmacro + \delta_\rho(\Hbb).$$
The degree-truncation maps $\pi_{\leq k} : T(\F^{Q_0}) \to T^{\leq k}(\F^{Q_0})$ produce a natural filtration
$$\Zsp(\Hbb) = \Zsp_{\leq K}(\Hbb) \subseteq \Zsp_{\leq K-1}(\Hbb) \subseteq \cdots \subseteq \Zsp_{\leq 0}(\Hbb),$$
where $K$ is the maximal tensor degree appearing in $\partial_\beta$, and the successive quotients are realized as images of the homogeneous components of $\partial_\beta$. We remark that, over a field $\F$ of characteristic $2$, the projection onto the first tensor factor recovers the classical $\F_2$-coefficient cycle space of an undirected graph from its symmetric-tensor encoding (Section~\ref{sec:observation}, Theorem~\ref{thm:F2-recovery}).

This filtration admits a spectral counterpart. Let $\F = \R$ and equip $T(\R^{Q_0})$ with the standard inner product making the standard tensor basis orthonormal. Write $\partial_\beta^*$ for the adjoint of $\partial_\beta$, and put
$$\Lap := \partial_\beta^* \partial_\beta : \R^{Q_1} \longrightarrow \R^{Q_1},$$
which we call the \emph{edge Gram operator} of $\Hbb$.

\begin{theorem}\label{thm:intro-gram}
Over $\F = \R$, the edge Gram operator $\Lap$ is symmetric positive semi-definite with $\Ker(\Lap) = \Zsp(\Hbb)$ and
$$\rank(\Lap) = |\Vmacro| - \cmacro - \delta(\Hbb).$$
The degree-truncated operators $L_{\leq k} := (\pi_{\leq k} \partial_\beta)^* (\pi_{\leq k} \partial_\beta)$ form a Loewner-monotone chain
$$L_{\leq 0} \preceq L_{\leq 1} \preceq \cdots \preceq L_{\leq K} = \Lap,$$
and their rank jumps coincide with the drops in the defect filtration:
$$\rank(L_{\leq k}) - \rank(L_{\leq k-1}) = \delta_{\leq k-1}(\Hbb) - \delta_{\leq k}(\Hbb).$$
\end{theorem}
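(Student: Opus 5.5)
The argument rests on the identity $\Ker(M^*M) = \Ker(M)$ for a real linear map between finite-dimensional inner product spaces, combined with the dimension formulas stated above.

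\emph{Positivity and kernel.} $\Lap = \partial_\beta^*\partial_\beta$ is symmetric, and $\langle \Lap x, x\rangle = \|\partial_\beta x\|^2 \ge 0$, so $\Lap$ is positive semi-definite. If $\Lap x = 0$, then $\|\partial_\beta x\|^2 = \langle \Lap x, x\rangle = 0$, so $\partial_\beta x = 0$. Conversely, $\Ker(\partial_\beta) \subseteq \Ker(\Lap)$ trivially. Hence $\Ker(\Lap) = \Zsp(\Hbb)$.

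\emph{Rank of $\Lap$.} Rank–nullity on $\R^{Q_1}$, together with the dimension formula $\dim \Zsp(\Hbb) = |Q_1| - |\Vmacro| + \cmacro + \delta(\Hbb)$ (Theorem~\ref{thm:intro-main} applied to $\Hmacro$ and $\hphi$), gives
$$\rank(\Lap) = |\Vmacro| - \cmacro - \delta(\Hbb).$$
The inner product on $T(\R^{Q_0})$ is well defined on each finite-dimensional subspace containing the image, since $\partial_\beta$ has finite-dimensional image inside $T^{\le K}$.

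\emph{Truncated operators.} The same argument applied to $\pi_{\le k}\partial_\beta$, with observation map $\rho = \pi_{\le k}$, gives
$$\Ker(L_{\le k}) = \Zsp_{\le k}(\Hbb), \qquad \rank(L_{\le k}) = |\Vmacro| - \cmacro - \delta_{\le k}(\Hbb),$$
using the observation-map dimension formula. Subtracting consecutive ranks yields
$$\rank(L_{\le k}) - \rank(L_{\le k-1}) = \delta_{\le k-1}(\Hbb) - \delta_{\le k}(\Hbb).$$

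\emph{Loewner monotonicity.} The standard tensor basis is orthonormal and homogeneous, so $T^{\le K}$ is the orthogonal direct sum of its homogeneous components $T^j$. With $\pi_j$ the homogeneous projection, this gives
$$\langle L_{\le k}x, x\rangle = \|\pi_{\le k}\partial_\beta x\|^2 = \sum_{j\le k}\|\pi_j \partial_\beta x\|^2.$$
This is nondecreasing in $k$, so $L_{\le k-1} \preceq L_{\le k}$. Since $\pi_{\le K}\partial_\beta = \partial_\beta$, we get $L_{\le K} = \Lap$. Equivalently, $L_{\le k} - L_{\le k-1} = (\pi_k\partial_\beta)^*(\pi_k\partial_\beta) \succeq 0$.

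\emph{Main obstacle.} The only delicate point is checking that $\pi_{\le k}$ is an orthogonal projection for the chosen inner product, i.e.\ that the degree components are mutually orthogonal. This holds because the standard basis of $T(\R^{Q_0})$ is the disjoint union of the homogeneous bases. I expect the author's interpretation to match: $\pi_{\le k}$ sends a tensor to its components of degree $\le k$.
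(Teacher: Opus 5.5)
Your proposal is correct and follows essentially the same route as the paper. The paper likewise uses $\langle \Lap\xi,\xi\rangle=\|\partial_\beta\xi\|^2$ for positivity and the kernel, the tensor dimension formula for the rank, the degree-wise orthogonal decomposition of $\|\pi_{\leq k}\partial_\beta\xi\|^2$ for Loewner monotonicity, and subtraction of the truncated rank formulas for the jumps, with the $k=0$ case covered by the conventions $L_{\leq -1}:=0$ and $\delta_{\leq -1}(\Hbb):=|\Vmacro|-\cmacro$, which you leave implicit.
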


We remark that for a loopless simple graph in Construction~(1) with $m = |Q_1|$ edges, $\Lap = 2 I_m + J_m$ depends only on $m$, so the Gram operator does not distinguish adjacency structure beyond the edge count. Finer information appears in the presence of parallel edges, loops, or symmetric-tensor degenerations (Section~\ref{sec:spectrum}).

The framework connects naturally to the oriented hypergraph theory of Reff--Rusnak \cite{ReffRusnak, Rusnak, ChenLiuRobinsonRusnakWang}. An \emph{oriented hypergraph} is a triple $\Hbb_o = (Q_0, Q_1, \mathbb{B}^{\mathrm{oh}})$ in which $\mathbb{B}^{\mathrm{oh}} \in \{-1, 0, +1\}^{Q_0 \times Q_1}$ is the incidence matrix.

\begin{theorem}\label{thm:intro-oh}
Every oriented hypergraph $\Hbb_o$ admits a canonical directed tensor-labeled hypergraph $F(\Hbb_o)$ with $A_e, B_e \in T^1(\F^{Q_0}) = \F^{Q_0}$ satisfying
$$\Zsp(F(\Hbb_o)) = \Ker(\mathbb{B}^{\mathrm{oh}}).$$
Moreover, in the star-shaped case, $\delta(F(\Hbb_o))$ is the affine-dependence defect of the indicator vectors of the terminal vertex sets (Proposition~\ref{prop:oh-star-minimality}); in particular, the inclusion--exclusion identity $\mathbf{1}_X + \mathbf{1}_Y = \mathbf{1}_{X \cup Y} + \mathbf{1}_{X \cap Y}$ yields a minimal example with $r = 4$ hyperedges and $\delta = 1$.
\end{theorem}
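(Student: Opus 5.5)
The plan is to build $F(\Hbb_o)$ directly from the columns of $\mathbb{B}^{\mathrm{oh}}$, so that $\partial_\beta$ becomes literally the incidence matrix. After that, the star-shaped statement is a direct application of Theorem~\ref{thm:intro-main} to a macrograph that is a star.

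\emph{Construction and kernel identity.} For each hyperedge $e\in Q_1$ let $T_e^{\pm}:=\{v\in Q_0 : \mathbb{B}^{\mathrm{oh}}_{v,e}=\pm1\}$. Put
$$A_e:=\textstyle\sum_{v\in T_e^-}\mathbf{1}_v,\qquad B_e:=\sum_{v\in T_e^+}\mathbf{1}_v,$$
both in $T^1(\F^{Q_0})=\F^{Q_0}$, and extend $\beta$ linearly. Canonicity is clear because no choices are made. Then $\partial_\beta(\mathbf{1}_e)=B_e-A_e$ is exactly the $e$-th column of $\mathbb{B}^{\mathrm{oh}}$, read as a vector in $\F^{Q_0}$. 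Hence $\partial_\beta$ and $\mathbb{B}^{\mathrm{oh}}$ are the same linear map $\F^{Q_1}\to\F^{Q_0}\subset T(\F^{Q_0})$, which gives $\Zsp(F(\Hbb_o))=\Ker(\mathbb{B}^{\mathrm{oh}})$. This is immediate. The only care needed is to note that the inclusion $T^1\hookrightarrow T$ does not alter kernels.

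\emph{Star-shaped case.} I read ``star-shaped'' as: all hyperedges share a common tail vector $A_e=a$, and the heads $B_e=\mathbf{1}_{X_e}$ are pairwise distinct and differ from $a$. (If the paper allows repeated heads, parallel edges just add the classical cycles, and I would reduce to distinct heads first.) Under this reading $\Hmacro$ is a star with $r+1$ vertices, $r$ edges and $\cmacro=1$, so $\Bmacro$ is injective and $\Img(\Bmacro)$ has basis $\mathbf{1}_{B_e}-\mathbf{1}_a$. By Theorem~\ref{thm:intro-main},
$$\delta(F(\Hbb_o))=\dim\Zsp=\dim\Bigl\{\lambda\in\F^{r}: \textstyle\sum_e\lambda_e(\mathbf{1}_{X_e}-a)=0\Bigr\}.$$
This is the space of linear relations among the differences $\mathbf{1}_{X_e}-a$. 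When the tail is empty, or more generally when $a$ lies in the affine span of the heads, this is precisely the space of affine dependences $\sum\lambda_e\mathbf{1}_{X_e}=0$ with $\sum\lambda_e=0$. In all cases it equals $r-\dim\operatorname{span}\{\mathbf{1}_{X_e}-a\}$, which is the affine-dependence defect as in Proposition~\ref{prop:oh-star-minimality}.

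\emph{Example and minimality.} For the example take $X,Y,X\cup Y,X\cap Y$ pairwise distinct, with $\lambda=(1,1,-1,-1)$. Then $\sum\lambda_e=0$ and $\sum\lambda_e\mathbf{1}_{X_e}=0$, so $\delta\ge1$. Three of the four vectors are affinely independent, so $\delta=1$ exactly.

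Minimality is the step I expect to be the main obstacle. The claim is that $r\le3$ forces $\delta=0$.
\begin{itemize}
\item For $r=1$ there is nothing to show.
\item For $r=2$, two distinct points are always affinely independent, so $\delta=0$.
\item For $r=3$, suppose three distinct $0/1$-vectors were affinely dependent. Then they would be collinear, say $p+t_i w$ for distinct $t_i$. Each coordinate would then be an affine function of $t$ taking only the values $0,1$ at three distinct parameters, which forces that coordinate to be constant. If every coordinate is constant, the three points coincide, a contradiction.
\end{itemize}
This parameter argument is valid in characteristic $0$. In positive characteristic I would still need to check the same reasoning about affine functions on a line. The delicate point is characteristic $2$, where ``taking only values $0,1$'' is automatic over $\F_2$. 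There I would argue directly instead: three distinct $\F_2$-vectors with an affine relation $\lambda_1+\lambda_2+\lambda_3=0$, $\lambda\neq0$, force two $\lambda_i$ equal to $1$ and the third $0$. The relation then reads $u=v$ for two of the vectors, again a contradiction.

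Finally, I would have to handle a nonzero common tail $a$ by the same collinearity argument applied to the points $\mathbf{1}_{X_e}$ together with $a$. The exact hypotheses of Proposition~\ref{prop:oh-star-minimality} would dictate which version is needed.
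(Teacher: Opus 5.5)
Your construction of $F(\Hbb_o)$ and the identity $\Zsp(F(\Hbb_o))=\Ker(\mathbb{B}^{\mathrm{oh}})$ are correct and match the paper's argument. The gap is in the star-shaped case. You correctly get $\delta=\dim K$ with $K=\{\lambda\in\F^r:\sum_e\lambda_e(\mathbf{1}_{X_e}-a)=0\}$. However, your claim that $K$ equals the space of affine dependences ``when the tail is empty, or more generally when $a$ lies in the affine span of the heads'' is backwards. Affine dependences always lie in $K$. $K$ is strictly larger exactly when some $\lambda\in K$ has $\sum_e\lambda_e\neq0$, which happens exactly when $a$ lies in the affine span of the $\mathbf{1}_{X_e}$: if $a=\sum_e\mu_e\mathbf{1}_{X_e}$ with $\sum_e\mu_e=1$, then $\mu\in K$. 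Concretely, take empty tails and heads $\{x\},\{y\},\{x,y\}$. Then $\Vmacro=\{0,x,y,x+y\}$, $\cmacro=1$ and $\rank\,\partial_\beta=2$, so $\delta=3-2=1$ with $r=3$. Under your reading, the minimality statement you go on to prove is therefore false.

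The missing idea is the one used in Proposition~\ref{prop:oh-star-minimality}. There the common source set $S$ is assumed non-empty, and $S\cap H_i=\emptyset$ holds automatically because each entry of $\mathbb{B}^{\mathrm{oh}}$ carries a single sign. In a relation $\sum_i\lambda_i(\mathbf{1}_{H_i}-\mathbf{1}_S)=0$, the vectors $\sum_i\lambda_i\mathbf{1}_{H_i}$ and $(\sum_i\lambda_i)\mathbf{1}_S$ then have disjoint supports, so both vanish. Since $\mathbf{1}_S\neq0$, this gives $\sum_i\lambda_i=0$ and $\sum_i\lambda_i\mathbf{1}_{H_i}=0$. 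Equivalently, $\mathbf{1}_S$ is not in the affine span of the heads, because every head vanishes on $S$.

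You explicitly leave this nonzero-tail case open. As written, this leaves three things unestablished:
\begin{enumerate}
\item the identification of $\delta$ with the affine-dependence defect;
\item the upper bound $\delta\le 1$ in your inclusion--exclusion example, which needs $\delta$ to be at most the affine defect (only $\delta\ge1$ holds unconditionally); you should also specify a non-empty source set disjoint from $X\cup Y$, as the paper does with the vertex $d$;
\item the implication $r\le 3\Rightarrow\delta=0$.
\end{enumerate}

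Your lemma on three distinct $0/1$-vectors is fine and is essentially Lemma~\ref{lem:01-affine}. It is also characteristic-free as it stands. If $w_v\neq0$, the map $t\mapsto p_v+tw_v$ is injective, so it cannot send three distinct parameters into the two-element set $\{0,1\}$. No separate characteristic-$2$ argument is needed, and your $\F_2$-only fallback would not cover fields such as $\F_4$ anyway.
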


The paper is structured as follows. In Section~\ref{sec:general} we prove Theorem~\ref{thm:intro-main} and develop the general theory of vector-valued incidence maps, including the spanning-forest basis and the matroid-nullity interpretation of $\delta_\phi$. In Section~\ref{sec:tensor-hg} we introduce directed tensor-labeled hypergraphs, the six standard constructions, and the dimension formula for $\Zsp(\Hbb)$. Section~\ref{sec:defect} proves Theorem~\ref{thm:intro-vanishing} and exhibits examples of positive defect outside the standard class. In Section~\ref{sec:observation} we develop observation maps, observed cycle spaces, and degree filtrations. In Section~\ref{sec:spectrum} we study Gram operators and prove Theorem~\ref{thm:intro-gram}. In Section~\ref{sec:related} we prove Theorem~\ref{thm:intro-oh} and relate the framework to oriented hypergraph theory.

Throughout the paper, $\F$ denotes an arbitrary field. Specific characteristic assumptions are stated explicitly where required.

\section{General theory of vector-valued incidence maps}\label{sec:general}

\subsection{Basic setup}

Let $D = (X, E, s, t)$ be a finite directed multigraph, that is, $X$ is a finite set of vertices, $E$ is a finite set of edges, and $s, t : E \to X$ assign to each edge $e \in E$ its source $s(e)$ and target $t(e)$. We allow loops and parallel edges. 
We write $c(D)$ for the number of weakly connected components of $D$.

\begin{definition}
Let $\F^X$ and $\F^E$ be the free $\F$-vector spaces on $X$ and $E$, with standard basis vectors $\mathbf{1}_x$ ($x \in X$) and $\mathbf{1}_e$ ($e \in E$). The \textbf{incidence matrix} of $D$ is the $\F$-linear map $B_D : \F^E \to \F^X$ defined on basis vectors by
$$B_D(\mathbf{1}_e) := \mathbf{1}_{t(e)} - \mathbf{1}_{s(e)} \quad (e \in E)$$
and extended by $\F$-linearity.
\end{definition}

The following is classical (see \cite[Proposition~4.3]{Biggs}).

\begin{lemma}\label{lem:classical-rank}
For any field $\F$,
\begin{equation}\label{eq:classical-rank}
\rank_{\F}(B_D)=|X|-c(D),\quad \dim_{\F} \Ker(B_D) = |E| - |X| + c(D).
\end{equation}
\end{lemma}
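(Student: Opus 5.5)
The plan is to prove the rank formula $\rank_\F(B_D) = |X| - c(D)$ directly. The kernel formula then follows by rank--nullity applied to $B_D : \F^E \to \F^X$, since $\dim_\F \F^E = |E|$.

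The first step identifies the image. Let $X_1, \dots, X_{c}$ be the vertex sets of the weakly connected components of $D$, with $c = c(D)$. Define the augmentation functionals $\varepsilon_i : \F^X \to \F$ by $\varepsilon_i(\mathbf{1}_x) = 1$ if $x \in X_i$ and $0$ otherwise. Each edge $e$ has both endpoints in a single component, so $\varepsilon_i(B_D(\mathbf{1}_e)) = 0$ for all $i$; loops give $B_D(\mathbf{1}_e)=0$ and cause no trouble. Hence $\Img(B_D) \subseteq W := \bigcap_i \Ker(\varepsilon_i)$. The $\varepsilon_i$ have pairwise disjoint nonempty supports, so they are linearly independent, and therefore $\dim_\F W = |X| - c$. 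This gives $\rank_\F(B_D) \le |X| - c(D)$.

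For the reverse inequality, I will show $W \subseteq \Img(B_D)$. Fix a root $r_i \in X_i$ in each component. The vectors $\mathbf{1}_x - \mathbf{1}_{r_i}$ with $x \in X_i \setminus \{r_i\}$ number $|X| - c$ in total and are linearly independent: each involves the coordinate $\mathbf{1}_x$, which appears in no other vector of the family. They also lie in $W$, so they form a basis of $W$. It therefore suffices to show each $\mathbf{1}_x - \mathbf{1}_{r_i}$ lies in $\Img(B_D)$. Since $x$ and $r_i$ are weakly connected, there is an undirected walk $r_i = x_0, x_1, \dots, x_\ell = x$ in which each consecutive pair is joined by some edge $e_j$, traversed either forwards or backwards. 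Put $\varepsilon_j = +1$ if $s(e_j) = x_{j-1}$ and $t(e_j) = x_j$, and $\varepsilon_j = -1$ otherwise. The sum $\sum_j \varepsilon_j \mathbf{1}_{e_j}$ then telescopes under $B_D$ to $\mathbf{1}_x - \mathbf{1}_{r_i}$.

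No step is a genuine obstacle. The only care needed is the characteristic-free bookkeeping: everything above uses only the coefficients $\pm 1$ and linear independence via disjoint supports, so the argument is valid over any field $\F$, as the lemma asserts. Combining the two inclusions gives $\Img(B_D) = W$, so $\rank_\F(B_D) = |X| - c(D)$, and rank--nullity yields $\dim_\F \Ker(B_D) = |E| - |X| + c(D)$.
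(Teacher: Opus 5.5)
The paper does not prove this lemma; it quotes it as classical with a reference to Biggs. Your argument is a correct and complete proof over any field, and it explicitly handles loops and parallel edges, which the paper's setting allows.

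Structurally, your logic runs in the opposite direction from the paper's. You first show that $\Img(B_D)$ is the subspace of vectors with zero coordinate sum on each weakly connected component, and then read off the rank. The paper instead takes the rank as given and deduces that same characterization in Lemma~\ref{lem:Bmacro-image} by a dimension count. Your spanning step, in which signed walks telescope to $\mathbf{1}_x - \mathbf{1}_{r_i}$, is the same device the paper uses in Proposition~\ref{prop:geometric-rank} and in \eqref{eq:path-telescope}. So your proof would make Section~\ref{sec:general} self-contained without any new ingredients.

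A common alternative is the dual argument. A vector $y \in \F^X$ satisfies $y^{\top} B_D = 0$ if and only if $y_{s(e)} = y_{t(e)}$ for every edge, that is, if and only if $y$ is constant on each weakly connected component. Hence the left kernel has dimension $c(D)$, and $\rank_\F(B_D) = |X| - c(D)$ follows at once. This avoids choosing roots and a basis of $W$. Your version has the advantage of being constructive: it exhibits explicit preimages of a basis of the image, which is the content the paper later reuses in Lemma~\ref{lem:algebraic-lift}.

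One cosmetic point: you use $\varepsilon$ both for the augmentation functionals and for the edge signs, so rename one of them.
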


\begin{definition}\label{def:edge-diff}
Let $U$ be an $\F$-vector space and $\phi : X \to U$ a map, which we call a \textbf{vector-valued labeling}. Denote by $\hphi : \F^X \to U$, $\mathbf{1}_x \mapsto \phi(x)$, the $\F$-linear extension of $\phi$. The \textbf{vector-valued incidence map} $\partial_{\phi} : \F^E \to U$ is the $\F$-linear map defined on basis vectors by
$$\partial_{\phi}(\mathbf{1}_e) := \phi(t(e)) - \phi(s(e)) \quad (e \in E)$$
and extended by $\F$-linearity.
\end{definition}

\begin{proposition}\label{prop:factorization}
The equation $\partial_{\phi} = \hphi \circ B_D$ holds. 
\end{proposition}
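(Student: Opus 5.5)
Since both sides of the claimed identity are $\F$-linear maps $\F^E \to U$, we only need to check that they agree on the standard basis $\{\mathbf{1}_e\}_{e \in E}$ of $\F^E$. Once agreement on each $\mathbf{1}_e$ is established, the universal property of the free vector space $\F^E$ (a linear map is determined by its values on a basis) gives $\partial_\phi = \hphi \circ B_D$.

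The key step is a direct computation. Fix $e \in E$. By the definition of the incidence matrix, $B_D(\mathbf{1}_e) = \mathbf{1}_{t(e)} - \mathbf{1}_{s(e)}$. Because $\hphi$ is $\F$-linear and satisfies $\hphi(\mathbf{1}_x) = \phi(x)$ for every $x \in X$, we get
$$(\hphi \circ B_D)(\mathbf{1}_e) = \hphi(\mathbf{1}_{t(e)}) - \hphi(\mathbf{1}_{s(e)}) = \phi(t(e)) - \phi(s(e)),$$
and this is exactly $\partial_\phi(\mathbf{1}_e)$ by Definition~\ref{def:edge-diff}.

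There is no real obstacle here. The only points needing a word of care are the following.
\begin{enumerate}[(i)]
\item The composite $\hphi \circ B_D$ is $\F$-linear, since it is a composition of linear maps.
\item Loops, where $s(e) = t(e)$, need no separate treatment: both sides vanish on $\mathbf{1}_e$.
\item Parallel edges are harmless, because the computation is carried out edge by edge.
\end{enumerate}
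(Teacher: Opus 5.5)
Your proof is correct and follows the paper's argument: both compute $\hphi(B_D(\mathbf{1}_e)) = \phi(t(e)) - \phi(s(e)) = \partial_\phi(\mathbf{1}_e)$ on each basis vector and conclude by linearity. Your remarks on loops and parallel edges are accurate but not needed, since the basis computation already covers them.
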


\begin{proof}
By the definitions, $\hphi(B_D(\mathbf{1}_e)) = \hphi(\mathbf{1}_{t(e)} - \mathbf{1}_{s(e)}) = \phi(t(e)) - \phi(s(e)) = \partial_{\phi}(\mathbf{1}_e)$. As both sides are $\F$-linear and agree on the basis $\{\mathbf{1}_e\}_{e \in E}$, the equality holds on $\F^E$.
\end{proof}

\subsection{The defect invariant and the kernel dimension formula}

\begin{definition}\label{def:defect}
The \textbf{defect invariant} of a vector-valued labeling $\phi : X \to U$ is
$$\dphi := \dim_{\F}\left(\Img(B_D) \cap \Ker(\hphi)\right).$$
\end{definition}

\begin{remark}
The subspace $\Img(B_D) \subset \F^X$ is the space of edge boundaries of $D$, that is, the elements whose coordinate sum on each weakly connected component equals zero. The subspace $\Ker(\hphi) \subset \F^X$ encodes the $\F$-linear relations among the labels $\phi(x)$ in $U$. Thus $\dphi$ measures the dimension of the linear dependencies among labels that are detected by the boundary operator.
\end{remark}

\begin{theorem}\label{thm:rank-nullity}
Let $\F$ be a field, $D = (X, E, s, t)$ a finite directed multigraph, $U$ an $\F$-vector space, and $\phi : X \to U$ a labeling. Then
\begin{equation}\label{eq:main}
\dim_{\F} \Ker(\partial_{\phi}) = |E| - |X| + c(D) + \dphi.
\end{equation}
\end{theorem}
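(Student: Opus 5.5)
The plan is to apply rank--nullity twice: once to $\partial_\phi$, and once to the restriction of $\hphi$ to $\Img(B_D)$. The factorization of Proposition~\ref{prop:factorization} connects the two computations, and Lemma~\ref{lem:classical-rank} supplies $\rank_\F(B_D)$.

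First, since $\partial_\phi = \hphi \circ B_D$, the image of $\partial_\phi$ equals $\hphi(\Img(B_D))$. Write $\psi := \hphi|_{\Img(B_D)} : \Img(B_D) \to U$. Then $\Img(\partial_\phi) = \Img(\psi)$, and
$$\Ker(\psi) = \Img(B_D) \cap \Ker(\hphi).$$
This is exactly the defect space, of dimension $\dphi$. Everything here is finite-dimensional, because $\Img(B_D) \subseteq \F^X$ and $X$ is finite; so $U$ may be infinite-dimensional without causing trouble.

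Rank--nullity for $\psi$ gives
$$\rank_\F(\partial_\phi) = \dim \Img(B_D) - \dphi = |X| - c(D) - \dphi,$$
using Lemma~\ref{lem:classical-rank} for $\dim \Img(B_D) = \rank_\F(B_D)$. Rank--nullity for $\partial_\phi$ on $\F^E$ then gives
$$\dim \Ker(\partial_\phi) = |E| - \rank_\F(\partial_\phi) = |E| - |X| + c(D) + \dphi,$$
which is \eqref{eq:main}. As a by-product, this also yields the identity $\dphi = \rank_\F(B_D) - \rank_\F(\partial_\phi)$ mentioned in the introduction.

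There is no real obstacle. The only point requiring care is the identification $\Ker(\psi) = \Img(B_D)\cap\Ker(\hphi)$, which holds immediately from the definition of restriction. Alternatively, one could argue via the short exact sequence $0 \to \Ker(B_D) \to \Ker(\partial_\phi) \xrightarrow{B_D} \Img(B_D)\cap\Ker(\hphi) \to 0$, checking surjectivity by lifting an element of the defect space through $B_D$. The rank--nullity route above avoids even this.
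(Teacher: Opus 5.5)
Your argument is correct, but it restricts a different map than the paper does.

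The paper restricts $B_D$ to $\Ker(\partial_\phi)$. This restriction maps onto $\Img(B_D)\cap\Ker(\hphi)$ with kernel $\Ker(B_D)$, which gives the short exact sequence $0\to\Ker(B_D)\to\Ker(\partial_\phi)\to\Img(B_D)\cap\Ker(\hphi)\to 0$. This is the alternative you mention at the end; surjectivity is immediate, since any $\xi$ with $B_D(\xi)\in\Ker(\hphi)$ already satisfies $\partial_\phi(\xi)=0$. The paper then adds $\dim\Ker(B_D)$ from Lemma~\ref{lem:classical-rank}.

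You instead restrict $\hphi$ to $\Img(B_D)$. That is exactly the argument the paper uses afterwards to prove Corollary~\ref{cor:defect-rank-drop}. So your route obtains the rank-drop identity $\dphi=\rank_\F(B_D)-\rank_\F(\partial_\phi)$ first and the kernel formula as a consequence, combining the theorem and that corollary into one computation.

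What the paper's route gives in addition is structural rather than numerical. It identifies the quotient $\Ker(\partial_\phi)/\Ker(B_D)$ canonically with the defect space. That identification is reused in Proposition~\ref{prop:cycle-ses}. The extended basis of Theorem~\ref{thm:basis} is a splitting of the same sequence via the lifts of Lemma~\ref{lem:algebraic-lift}.

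Your remark that only finite-dimensional spaces enter the computation, so an infinite-dimensional $U$ such as the later $T(\F^{Q_0})$ is harmless, is a point the paper leaves implicit.
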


\begin{proof}
By Proposition \ref{prop:factorization}, $\partial_{\phi} = \hphi \circ B_D$. Hence $\xi \in \Ker(\partial_{\phi})$ if and only if $B_D(\xi) \in \Ker(\hphi)$. The restriction of $B_D$ to $\Ker(\partial_{\phi})$ is an $\F$-linear surjection onto $\Img(B_D) \cap \Ker(\hphi)$ with kernel $\Ker(B_D)$, which yields the short exact sequence
$$0 \to \Ker(B_D) \to \Ker(\partial_{\phi}) \xrightarrow{B_D} \Img(B_D) \cap \Ker(\hphi) \to 0.$$
Thus, we have $\dim_{\F} \Ker(\partial_{\phi}) = \dim_{\F} \Ker(B_D) + \dphi$. Substituting Lemma \ref{lem:classical-rank} implies \eqref{eq:main}.
\end{proof}

In particular, if $\hphi$ is injective then $\Ker(\hphi) = 0$, so $\dphi = 0$, and \eqref{eq:main} reduces to the classical formula \eqref{eq:classical-rank}.

\begin{corollary}\label{cor:defect-rank-drop}
The defect invariant equals the rank drop
$$\dphi = \rank_{\F}(B_D) - \rank_{\F}(\partial_{\phi}) = \left(|X| - c(D)\right) - \rank_{\F}(\partial_{\phi}).$$
\end{corollary}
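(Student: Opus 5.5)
The plan is to combine Theorem~\ref{thm:rank-nullity} with rank--nullity for $\partial_\phi$ and then substitute Lemma~\ref{lem:classical-rank}. No new structural input is needed; the corollary is a reformulation of the dimension count already obtained.

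First, apply rank--nullity to the $\F$-linear map $\partial_\phi : \F^E \to U$. Since $\F^E$ is finite-dimensional of dimension $|E|$, this gives
\[
\rank_\F(\partial_\phi) = |E| - \dim_\F \Ker(\partial_\phi).
\]
Substituting the formula \eqref{eq:main} of Theorem~\ref{thm:rank-nullity} yields
\[
\rank_\F(\partial_\phi) = |X| - c(D) - \dphi.
\]
Rearranging gives $\dphi = (|X| - c(D)) - \rank_\F(\partial_\phi)$. By Lemma~\ref{lem:classical-rank}, $|X| - c(D) = \rank_\F(B_D)$, which gives the other stated form.

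As a cross-check independent of the graph-theoretic count, I would also note a direct argument. Since $\partial_\phi = \hphi \circ B_D$ by Proposition~\ref{prop:factorization}, we have $\Img(\partial_\phi) = \hphi(\Img(B_D))$. Restricting $\hphi$ to the finite-dimensional subspace $\Img(B_D)$, whose kernel is $\Img(B_D) \cap \Ker(\hphi)$, rank--nullity gives
\[
\rank_\F(B_D) = \rank_\F(\partial_\phi) + \dphi.
\]
There is no real obstacle. The only point to watch is that $U$ may be infinite-dimensional, but every rank in sight is bounded by $|E|$ or $|X|$, so all quantities are finite and the subtractions are legitimate.
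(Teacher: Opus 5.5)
Your proof is correct. Your primary route differs slightly from the paper's: you combine rank--nullity for $\partial_\phi$ with the kernel formula of Theorem~\ref{thm:rank-nullity}. Your ``cross-check'' is exactly the paper's proof. The paper uses $\Img(\partial_\phi) = \hphi(\Img(B_D))$, applies rank--nullity to $\hphi|_{\Img(B_D)}$ (whose kernel is $\Img(B_D)\cap\Ker(\hphi)$), and invokes Lemma~\ref{lem:classical-rank} only for the second equality.

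The paper's route has a small conceptual advantage. The first equality $\dphi = \rank_\F(B_D) - \rank_\F(\partial_\phi)$ comes out as a statement about any composite of linear maps on a finite-dimensional domain, with no graph-theoretic input. Your primary route instead passes through $|X| - c(D)$, which is built into Theorem~\ref{thm:rank-nullity}, and then converts back via the lemma. Your route has its own merit: it shows that the corollary is just the rank form of the kernel-dimension formula already proved, so no new argument is needed.

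Your remark that all ranks are finite even when $U$ is infinite-dimensional is the right thing to check, and it holds in both versions.
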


\begin{proof}
By Proposition \ref{prop:factorization}, $\Img(\partial_{\phi}) = (\hphi \circ B_D)(\F^E) = \hphi(\Img(B_D))$. Applying the rank-nullity theorem to the restriction $\hphi|_{\Img(B_D)} : \Img(B_D) \to U$, we obtain
\begin{align*}
\rank_{\F}(B_D) &=  \dim_{\F} \hphi(\Img(B_D)) + \dim_{\F}\left(\Img(B_D) \cap \Ker(\hphi)\right) \\
&= \rank_{\F}(\partial_{\phi}) + \dphi.
\end{align*}
The claim now follows from $\rank_{\F}(B_D) = |X| - c(D)$ in Lemma \ref{lem:classical-rank}.
\end{proof}

\subsection{Geometric interpretation of the rank}

\begin{proposition}\label{prop:geometric-rank}
For each weakly connected component $C \subset X$ of $D$, choose a basepoint $r_C \in C$ and set
$$U_C := \Span_{\F}\{\phi(x) - \phi(r_C) \mid x \in C\} \subset U.$$
Then, we have
\begin{equation}\label{eq:rank-sum}
\rank_{\F}(\partial_{\phi}) = \dim_{\F}\left(\sum_C U_C\right),
\end{equation}
where the sum is over the weakly connected components of $D$.
\end{proposition}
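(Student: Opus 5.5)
The approach is to identify $\Img(\partial_\phi)$ with $\sum_C U_C$ directly, and then read off dimensions. By Proposition~\ref{prop:factorization} we have $\Img(\partial_\phi) = \hphi(\Img(B_D))$, so it suffices to describe $\Img(B_D)$ explicitly and push it forward through $\hphi$.

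First, I will show that
$$\Img(B_D) = \Span_\F\{\mathbf{1}_x - \mathbf{1}_{r_C} \mid C \text{ a component},\ x \in C\}.$$
For the inclusion $\subseteq$, each generator $\mathbf{1}_{t(e)} - \mathbf{1}_{s(e)}$ has both endpoints in a single component $C$. It can therefore be written as $(\mathbf{1}_{t(e)} - \mathbf{1}_{r_C}) - (\mathbf{1}_{s(e)} - \mathbf{1}_{r_C})$.

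For the inclusion $\supseteq$, fix $x \in C$. Weak connectivity gives an undirected walk $r_C = x_0, x_1, \dots, x_n = x$ in which consecutive vertices are joined by some edge $e_i$ of either orientation. The vector $\mathbf{1}_{x_i} - \mathbf{1}_{x_{i-1}}$ equals $\pm B_D(\mathbf{1}_{e_i})$, and the sum telescopes to $\mathbf{1}_x - \mathbf{1}_{r_C}$. Hence each generator lies in $\Img(B_D)$. A dimension count against Lemma~\ref{lem:classical-rank} is available as a sanity check: the right-hand side has dimension $\sum_C(|C|-1) = |X| - c(D)$. However, the two inclusions already suffice.

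Next, applying $\hphi$ to this spanning set gives
$$\Img(\partial_\phi) = \hphi(\Img(B_D)) = \Span_\F\{\phi(x) - \phi(r_C)\} = \sum_C U_C.$$
Taking dimensions yields \eqref{eq:rank-sum}.

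The only step needing any care is the telescoping argument along an undirected walk, which handles edges traversed against their orientation by a sign. Loops contribute zero and cause no trouble. The statement does not depend on the choice of basepoints: $U_C$ is simply the span of all differences $\phi(x) - \phi(y)$ with $x, y \in C$, and the argument never uses the particular $r_C$ chosen.
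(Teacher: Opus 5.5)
Your proof is correct and is essentially the paper's argument: both inclusions come from writing each edge difference as a difference of rooted differences and from telescoping along a signed path from $r_C$ to $x$. The only difference is cosmetic: you carry this out for $\Img(B_D)$ in $\F^X$ and then push forward through $\hphi$ via Proposition~\ref{prop:factorization}, whereas the paper works directly with the vectors $\phi(t(e)) - \phi(s(e))$ in $U$.
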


\begin{proof}
We have $\Img(\partial_{\phi}) = \Span_{\F}\{\phi(t(e)) - \phi(s(e)) \mid e \in E\}$. For an edge $e$ in the component $C$,
$$\phi(t(e)) - \phi(s(e)) = \left(\phi(t(e)) - \phi(r_C)\right) - \left(\phi(s(e)) - \phi(r_C)\right) \in U_C.$$
Conversely, let $x \in C$ and take a path $r_C = x_0, x_1, \ldots, x_{\ell} = x$ from $r_C$ to $x$ in the underlying graph of $D$. For each $i$, choose an edge $e_i \in E$ joining $x_{i-1}$ and $x_i$, and put
$$\varepsilon_i := \begin{cases} +1 & \text{if } s(e_i) = x_{i-1}, \\ -1 & \text{if } s(e_i) = x_i. \end{cases}$$
Then $\varepsilon_i \partial_{\phi}(\mathbf{1}_{e_i}) = \phi(x_i) - \phi(x_{i-1})$ in both cases. Hence
$$\partial_{\phi}\left(\sum_{i=1}^{\ell} \varepsilon_i \mathbf{1}_{e_i}\right) = \sum_{i=1}^{\ell} \left(\phi(x_i) - \phi(x_{i-1})\right) = \phi(x) - \phi(r_C),$$
so $\phi(x) - \phi(r_C) \in \Img(\partial_{\phi})$. We conclude $\Img(\partial_{\phi}) = \sum_C U_C$.
\end{proof}

\begin{proposition}\label{prop:affine-defect}
For each weakly connected component $C$ of $D = (X, E, s, t)$, fix a basepoint $r_C \in C$ and define the $\F$-linear map
$$A_{\phi} : \bigoplus_C \F^{C \setminus \{r_C\}} \longrightarrow U, \quad A_{\phi}(\mathbf{1}_x) := \phi(x) - \phi(r_C) \quad (x \in C \setminus \{r_C\}).$$
Then the following hold.
\begin{enumerate}
\item[\textup{(1)}] $\dphi = \dim_{\F} \Ker(A_{\phi})$, and this value is independent of the choice of basepoints $\{r_C\}$.
\item[\textup{(2)}] $\dphi = 0$ if and only if the family $\left(\phi(x) - \phi(r_C)\right)_{C, x \in C \setminus \{r_C\}}$ is $\F$-linearly independent in $U$.
\item[\textup{(3)}] If $D$ is connected, then $\dphi = 0$ if and only if the family $(\phi(x))_{x \in X}$ is affinely independent in $U$.
\end{enumerate}
\end{proposition}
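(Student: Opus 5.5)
The plan is to identify $\Img(B_D)$ with the domain of $A_\phi$ through an explicit isomorphism intertwining $A_\phi$ with $\hphi$. Define
$$\iota : \bigoplus_C \F^{C\setminus\{r_C\}} \longrightarrow \F^X, \qquad \iota(\mathbf{1}_x) := \mathbf{1}_x - \mathbf{1}_{r_C} \quad (x \in C\setminus\{r_C\}).$$
By construction $\hphi\circ\iota = A_\phi$. The images $\mathbf{1}_x - \mathbf{1}_{r_C}$ are linearly independent in $\F^X$, since each involves a distinct coordinate $x$ not equal to any basepoint. Hence $\iota$ is injective.

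The first step is to show $\Img(\iota) = \Img(B_D)$. For the inclusion $\Img(\iota)\subseteq \Img(B_D)$, use the path argument from the proof of Proposition~\ref{prop:geometric-rank}, carried out in $\F^X$ with $\phi$ replaced by the identity labeling $x\mapsto \mathbf{1}_x$. It shows $\mathbf{1}_x-\mathbf{1}_{r_C}=B_D(\sum\varepsilon_i\mathbf{1}_{e_i})$. Equality then follows from dimensions: $\dim\Img(\iota)=\sum_C(|C|-1)=|X|-c(D)=\rank_\F(B_D)$ by Lemma~\ref{lem:classical-rank}.

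Consequently $\iota$ is an isomorphism onto $\Img(B_D)$, and it carries $\Ker(A_\phi)=\Ker(\hphi\circ\iota)$ isomorphically onto $\Img(B_D)\cap\Ker(\hphi)$. This gives $\dphi=\dim\Ker(A_\phi)$. Independence of the choice of basepoints is then automatic, because $\dphi$ is defined without reference to any basepoints. This proves (1).

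For (2), $\Ker(A_\phi)=0$ holds exactly when the images of the basis vectors, namely the family $(\phi(x)-\phi(r_C))$, are linearly independent. For (3), when $D$ is connected there is a single component with basepoint $r$. Affine independence of $(\phi(x))_{x\in X}$ is by definition, or by the standard equivalent characterization, the linear independence of the differences $(\phi(x)-\phi(r))_{x\neq r}$ for a fixed $r$, so (3) follows from (2).

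If affine independence is taken to mean that $\sum\lambda_x\phi(x)=0$ with $\sum\lambda_x=0$ forces all $\lambda_x=0$, I will add a one-line translation. Substituting $\lambda_r=-\sum_{x\neq r}\lambda_x$ turns such a relation into a relation among the differences, and conversely.

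The only step needing care is the equality $\Img(\iota)=\Img(B_D)$, which rests on the path construction together with the rank count. Everything else is formal.
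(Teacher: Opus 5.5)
Your proof is correct, but it takes a slightly different route from the paper's.

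\textbf{The paper's route.} The paper works only with dimensions. It notes $\Img(A_\phi)=\sum_C U_C$ and uses Proposition~\ref{prop:geometric-rank} to get $\rank_{\F}(A_\phi)=\rank_{\F}(\partial_\phi)$. It then combines rank--nullity for $A_\phi$, whose domain has dimension $|X|-c(D)$, with the rank-drop formula of Corollary~\ref{cor:defect-rank-drop}. Basepoint independence is argued separately, by showing that $U_C$ does not depend on the choice of $r_C$.

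\textbf{Your route.} You work one step earlier, inside $\F^X$. The injective map $\iota$ satisfies $\hphi\circ\iota=A_\phi$ and $\Img(\iota)=\Img(B_D)$, so it restricts to an explicit isomorphism $\Ker(A_\phi)\xrightarrow{\sim}\Img(B_D)\cap\Ker(\hphi)$.

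\textbf{Comparison.} Both arguments use the same ingredients: a path argument and Lemma~\ref{lem:classical-rank}. Your version identifies the defect space $\Delta_\phi$ itself, not just its dimension, which is convenient for writing down its elements concretely. It also makes basepoint independence tautological. You are right that $\dphi$ is defined without reference to basepoints, so the paper's separate argument for independence is unnecessary, though harmless. The paper's route has the advantage of reusing results already established and of needing no auxiliary map.
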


\begin{proof}
(1) We have $\Img(A_{\phi}) = \Span_{\F}\{\phi(x) - \phi(r_C)\} = \sum_C U_C$, so by Proposition \ref{prop:geometric-rank}, $\rank_{\F}(A_{\phi}) = \dim_{\F}\left(\sum_C U_C\right) = \rank_{\F}(\partial_{\phi})$. The dimension of the domain is $\sum_C (|C| - 1) = |X| - c(D)$, so Corollary \ref{cor:defect-rank-drop} gives
$$\dim_{\F} \Ker(A_{\phi}) = \left(|X| - c(D)\right) - \rank_{\F}(A_{\phi}) = \left(|X| - c(D)\right) - \rank_{\F}(\partial_{\phi}) = \dphi.$$
If $r_C$ is replaced by another basepoint $r_C'$, then
$$\phi(x) - \phi(r_C') = \left(\phi(x) - \phi(r_C)\right) - \left(\phi(r_C') - \phi(r_C)\right),$$
which shows that $U_C$ is independent of the choice of basepoint. Hence $\rank_{\F}(A_{\phi})$ is independent of $\{r_C\}$, and so is $\dim_{\F} \Ker(A_{\phi}) = \dphi$.

(2) The injectivity of $A_{\phi}$ is equivalent to the $\F$-linear independence of $\{A_{\phi}(\mathbf{1}_x)\}_{C, x \in C \setminus \{r_C\}}$. The claim follows from (1).

(3) This is immediate from (2) and the definition of affine independence.
\end{proof}

For subspaces of $U$ we have in general
$$\dim_{\F}\left(\sum_C U_C\right) \leq \sum_C \dim_{\F} U_C,$$
with equality if and only if the sum is direct. By Corollary \ref{cor:defect-rank-drop} and Proposition \ref{prop:geometric-rank},
\begin{equation}\label{eq:geometric-defect}
\dphi = \left(|X| - c(D)\right) - \dim_{\F}\left(\sum_C U_C\right).
\end{equation}
For each component $C$, writing $B_{D,C}$ and $\hphi_C$ for the restrictions, Corollary \ref{cor:defect-rank-drop} applied to $C$ gives $\dim U_C = (|C| - 1) - \dim_{\F}(\Img(B_{D,C}) \cap \Ker(\hphi_C))$. Substituting this into \eqref{eq:geometric-defect}, we obtain the decomposition
\begin{equation}
\dphi = \underbrace{\left[\sum_C \dim_{\F} U_C - \dim_{\F}\left(\sum_C U_C\right)\right]}_{\text{cross-component coupling}} + \underbrace{\sum_C \dim_{\F}\left(\Img(B_{D,C}) \cap \Ker(\hphi_C)\right)}_{\text{within-component algebraic dependencies}}.
\end{equation}
The first term, which vanishes precisely when the sum $\sum_C U_C$ is direct, measures the dimension in which labels from different components are linearly coupled in $U$. The second measures, for each component, the linear relations among the labels that are compatible with the boundary. We note that $\dphi$ is not a local quantity determined componentwise, but a global quantity that depends on the configuration of all labels in $U$.

\begin{remark}\label{rem:matroid-nullity}
Proposition \ref{prop:affine-defect} yields a matroid-theoretic interpretation of $\dphi$. Let $\mathcal{A}_{\phi} := (\phi(x) - \phi(r_C))_{C, x \in C \setminus \{r_C\}}$ be the family of rooted differences, and let $M[\mathcal{A}_{\phi}]$ be the linear matroid on $\mathcal{A}_{\phi}$ in $U$. Then
$$\dphi = |\mathcal{A}_{\phi}| - \rank(\mathcal{A}_{\phi}),$$
which is the corank of $M[\mathcal{A}_{\phi}]$. Although the matroid $M[\mathcal{A}_{\phi}]$ itself depends on the choice of basepoints $\{r_C\}$, its nullity $\dphi$ does not (Proposition \ref{prop:affine-defect}~(1)). In the tensor version developed in Section \ref{sec:tensor-hg}, $\delta(\Hbb)$ is reinterpreted as the corank of the linear matroid defined by the family $(w - r_C)_{C, w \in C \setminus \{r_C\}}$ (Corollary \ref{cor:tensor-affine-defect}).
\end{remark}

\subsection{Basis construction via spanning forests}

A \textbf{spanning forest} of $D$ is a subset $T \subset E$ such that the underlying graph of the sub-multigraph of $D$ with edge set $T$ contains no cycle and satisfies $|T| = |X| - c(D)$.

For two vertices $a, b$ lying in the same weakly connected component, the underlying graph of $T$ contains a unique simple path from $a$ to $b$. Write this path as $a = y_0, y_1, \ldots, y_m = b$, and let $g_k \in T$ be the tree edge joining $y_{k-1}$ and $y_k$. Set $\eta_k := +1$ if $s(g_k) = y_{k-1}$, and $\eta_k := -1$ if $s(g_k) = y_k$. The \textbf{signed path vector} from $a$ to $b$ is
$$F[a, b] := \sum_{k=1}^{m} \eta_k \mathbf{1}_{g_k} \;\in\; \F^T $$
(with $F[a, a] := 0$, taking $m = 0$).

\begin{definition}\label{def:spanning-forest}
Let $T$ be a spanning forest of $D$. For each edge $e \in E \setminus T$, the \textbf{topological cycle} at $e$ is
$$Z_e^{(\mathrm{top})} := \mathbf{1}_e - F[s(e), t(e)] \;\in\; \F^E.$$
\end{definition}

The signed path vector satisfies
\begin{equation}\label{eq:path-telescope}
B_D(F[a, b]) = \mathbf{1}_b - \mathbf{1}_a.
\end{equation}
Indeed, for the path $a = y_0, \ldots, y_m = b$ above, the definition of $\eta_k$ gives $\eta_k B_D(\mathbf{1}_{g_k}) = \eta_k(\mathbf{1}_{t(g_k)} - \mathbf{1}_{s(g_k)}) = \mathbf{1}_{y_k} - \mathbf{1}_{y_{k-1}}$ in either case. Hence
$$B_D(F[a, b]) = \sum_{k=1}^{m} \eta_k B_D(\mathbf{1}_{g_k}) = \sum_{k=1}^{m} (\mathbf{1}_{y_k} - \mathbf{1}_{y_{k-1}}) = \mathbf{1}_b - \mathbf{1}_a.$$

\begin{lemma}\label{lem:topological-cycle}
The family $\{Z_e^{(\mathrm{top})}\}_{e \in E \setminus T}$ is an $\F$-basis of $\Ker(B_D)$.
\end{lemma}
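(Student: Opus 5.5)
The plan is to verify three things in order: that each $Z_e^{(\mathrm{top})}$ lies in $\Ker(B_D)$, that the family is linearly independent, and that it has the right cardinality. Lemma~\ref{lem:classical-rank} then finishes the argument.

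\emph{Membership.} Using \eqref{eq:path-telescope}, we compute
$$B_D(Z_e^{(\mathrm{top})}) = \mathbf{1}_{t(e)} - \mathbf{1}_{s(e)} - B_D(F[s(e),t(e)]) = \mathbf{1}_{t(e)} - \mathbf{1}_{s(e)} - (\mathbf{1}_{t(e)} - \mathbf{1}_{s(e)}) = 0.$$
This uses the fact that $s(e)$ and $t(e)$ lie in the same weakly connected component, so $F[s(e),t(e)]$ is defined. A loop gives $F[s(e),s(e)] = 0$, so $Z_e^{(\mathrm{top})} = \mathbf{1}_e$, which is consistent.

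\emph{Linear independence.} Since $F[s(e),t(e)] \in \F^T$, the coordinate of $Z_e^{(\mathrm{top})}$ at a non-tree edge $e'\in E\setminus T$ equals $1$ if $e'=e$ and $0$ otherwise. Suppose $\sum_{e\in E\setminus T} \lambda_e Z_e^{(\mathrm{top})} = 0$. Projecting onto the coordinates indexed by $E\setminus T$ gives $\lambda_e = 0$ for all $e$. So the family is independent; in fact its projection to $\F^{E\setminus T}$ is the standard basis.

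\emph{Counting.} The family has $|E\setminus T| = |E| - |X| + c(D)$ elements, by the definition of a spanning forest. By Lemma~\ref{lem:classical-rank}, this equals $\dim_\F \Ker(B_D)$. An independent family of this size inside $\Ker(B_D)$ is therefore a basis.

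An alternative that avoids dimension counting is to show spanning directly. Given $\xi \in \Ker(B_D)$, put $\xi' := \xi - \sum_{e\notin T} \xi_e Z_e^{(\mathrm{top})}$. Then $\xi'$ is supported on $T$ and lies in $\Ker(B_D)$. Since $T$ is acyclic, $B_D|_{\F^T}$ is injective: by Lemma~\ref{lem:classical-rank} applied to $(X,T)$, which has $c = c(D)$, the kernel has dimension $|T| - |X| + c(D) = 0$. Hence $\xi' = 0$.

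The only point requiring care is the well-definedness of $F[s(e),t(e)]$. This needs the underlying graph of $T$ to connect every pair of vertices in each weakly connected component of $D$. It follows because $T$ is acyclic with $|T| = |X| - c(D)$ edges, so the forest $(X,T)$ has exactly $c(D)$ components. Since the components of $(X,T)$ refine those of $D$ and there are equally many of them, the two partitions coincide.
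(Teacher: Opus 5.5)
Your proof is correct and follows the paper's argument exactly: membership via \eqref{eq:path-telescope}, independence by projecting onto the non-tree coordinates, and the count $|E\setminus T| = |E|-|X|+c(D)$ matched against Lemma~\ref{lem:classical-rank}. Your closing argument that $(X,T)$ has exactly $c(D)$ components, so that $F[s(e),t(e)]$ is well defined, justifies a point the paper only asserts. The direct spanning argument is a valid but optional alternative to the dimension count.
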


\begin{proof}
For each $e \in E \setminus T$, by the definition of $B_D$ and \eqref{eq:path-telescope},
$$B_D(Z_e^{(\mathrm{top})}) = B_D(\mathbf{1}_e) - B_D(F[s(e), t(e)]) = (\mathbf{1}_{t(e)} - \mathbf{1}_{s(e)}) - (\mathbf{1}_{t(e)} - \mathbf{1}_{s(e)}) = 0,$$
so $Z_e^{(\mathrm{top})} \in \Ker(B_D)$.

By Definition \ref{def:spanning-forest}, $F[s(e), t(e)]$ has support in $T$, so the projection $\pi : \F^E \to \F^{E \setminus T}$ onto the non-tree coordinates satisfies $\pi(F[s(e), t(e)]) = 0$. 
Hence $\pi(Z_e^{(\mathrm{top})}) = \pi(\mathbf{1}_e) = \mathbf{1}_e$ for $e \in E \setminus T$. 
If $\sum_{e \in E \setminus T} \alpha_e Z_e^{(\mathrm{top})} = 0$ with $\alpha_e \in \F$, then applying $\pi$ gives $\sum_{e \in E \setminus T} \alpha_e \mathbf{1}_e = 0$, and since $\{\mathbf{1}_e\}_{e \in E \setminus T}$ is the standard basis of $\F^{E \setminus T}$, all $\alpha_e = 0$.

The dimension of $\Span_{\F}\{Z_e^{(\mathrm{top})}\}_{e \in E \setminus T}$ is $|E \setminus T| = |E| - |X| + c(D)$, which by Lemma \ref{lem:classical-rank} equals $\dim_{\F} \Ker(B_D)$. 
Therefore, $\{Z_e^{(\mathrm{top})}\}_{e \in E \setminus T}$ is an $\F$-basis of $\Ker(B_D)$.
\end{proof}

\begin{lemma}\label{lem:algebraic-lift}
For any $r \in \Img(B_D) \cap \Ker(\hphi) \subset \F^X$, there exists $\zeta_r \in \Ker(\partial_{\phi})$ such that $B_D(\zeta_r) = r$.
\end{lemma}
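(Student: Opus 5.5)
The lemma is essentially a restatement of the surjectivity step in the proof of Theorem~\ref{thm:rank-nullity}. I will argue directly from the factorization of Proposition~\ref{prop:factorization}, and I would also make the lift explicit using the spanning-forest machinery just introduced.

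Let $r \in \Img(B_D) \cap \Ker(\hphi)$. Since $r \in \Img(B_D)$, there is some $\zeta \in \F^E$ with $B_D(\zeta) = r$. By Proposition~\ref{prop:factorization},
$$\partial_\phi(\zeta) = \hphi(B_D(\zeta)) = \hphi(r) = 0,$$
because $r \in \Ker(\hphi)$. Hence $\zeta \in \Ker(\partial_\phi)$, and we may set $\zeta_r := \zeta$.

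For a canonical choice of lift, fix a spanning forest $T$. By the Remark following Definition~\ref{def:defect}, $r$ has coordinate sum zero on each weakly connected component $C$. So on each $C$ with basepoint $r_C$ we can write
$$r|_C = \sum_{x \in C} r_x(\mathbf{1}_x - \mathbf{1}_{r_C}).$$
Set
$$\zeta_r := \sum_C \sum_{x \in C} r_x\, F[r_C, x] \in \F^T \subset \F^E.$$
By \eqref{eq:path-telescope}, $B_D(\zeta_r) = \sum_C \sum_{x\in C} r_x(\mathbf{1}_x - \mathbf{1}_{r_C}) = r$, and the factorization again gives $\partial_\phi(\zeta_r) = 0$. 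This lift is supported on $T$ and depends linearly on $r$, so it is the form to use later when building a basis of $\Ker(\partial_\phi)$.

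The only step needing care is the component-sum-zero description of $\Img(B_D)$, which the explicit version uses. One inclusion follows because each $B_D(\mathbf{1}_e)$ has zero sum on every component. Equality then follows from the dimension count in Lemma~\ref{lem:classical-rank}: the subspace of $\F^X$ with zero sum on each component has dimension $|X| - c(D) = \rank_\F(B_D)$. Beyond this, the argument is routine.
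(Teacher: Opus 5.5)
Your proof is correct. Your first paragraph already proves the lemma: any preimage $\zeta$ of $r$ under $B_D$ satisfies $\partial_\phi(\zeta)=\hphi(B_D(\zeta))=\hphi(r)=0$ by the factorization. So existence needs neither the spanning forest nor the component-sum description of $\Img(B_D)$.

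The paper does not take this shortcut. It goes directly to the explicit lift, which is exactly your second construction. Your sum over all $x\in C$ agrees with the paper's sum over $C\setminus\{r_C\}$ because $F[r_C,r_C]=0$.

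What the explicit version buys is a concrete lift that is linear in $r$ and supported on $T$. It is in fact the unique preimage of $r$ inside $\F^T$, since $B_D$ restricted to $\F^T$ is injective for a forest. This is what the paper has in mind when it later remarks that lifting $\Zsp_{\mathrm{alg}}$ into edge space depends on the choice of spanning forest. For the use in Theorem~\ref{thm:basis}, however, only $\zeta_{r_i}\in\Ker(\partial_\phi)$ and $B_D(\zeta_{r_i})=r_i$ are needed, so your abstract lift would serve equally well there.

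One minor point: the explicit construction uses only the easy inclusion, namely that elements of $\Img(B_D)$ have zero sum on each component, because $r$ is assumed to lie in $\Img(B_D)$. The dimension count establishing equality is unnecessary.
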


\begin{proof}
Write $r = \sum_{x \in X} r_x \mathbf{1}_x$ with $r_x \in \F$. 
Note that $\Img(B_D)$ is spanned by the elements $B_D(\mathbf{1}_e) = \mathbf{1}_{t(e)} - \mathbf{1}_{s(e)}$.  
For any $e\in E$ $s(e)$ and $t(e)$  lie in the same weakly connected component. 
Thus, $\sum_{x \in C} r_x = 0$ for each weakly connected component $C$.

For each component $C$, fix a basepoint $r_C \in C$. Using the signed path vector $F[r_C, x] \in \F^T \subset \F^E$ for $x \in C \setminus \{r_C\}$, define
$$\zeta_r := \sum_C \sum_{x \in C \setminus \{r_C\}} r_x\, F[r_C, x] \;\in\; \F^E.$$
By \eqref{eq:path-telescope}, $B_D(F[r_C, x]) = \mathbf{1}_x - \mathbf{1}_{r_C}$. The condition $\sum_{x \in C} r_x = 0$ gives $\sum_{x \in C \setminus \{r_C\}} r_x = -r_{r_C}$, and so
\begin{align*}
B_D(\zeta_r) &= \sum_C \sum_{x \in C \setminus \{r_C\}} r_x \left(\mathbf{1}_x - \mathbf{1}_{r_C}\right) \\
&= \sum_C \left(\sum_{x \in C \setminus \{r_C\}} r_x \mathbf{1}_x - \left(\sum_{x \in C \setminus \{r_C\}} r_x\right) \mathbf{1}_{r_C}\right) \\
&= \sum_C \left(\sum_{x \in C \setminus \{r_C\}} r_x \mathbf{1}_x + r_{r_C} \mathbf{1}_{r_C}\right) = \sum_{x \in X} r_x \mathbf{1}_x = r.
\end{align*}
Furthermore, by Proposition \ref{prop:factorization} and $r \in \Ker(\hphi)$, we have $\partial_{\phi}(\zeta_r) =\hphi(r) = 0$. Hence $\zeta_r \in \Ker(\partial_{\phi})$ and $B_D(\zeta_r) = r$.
\end{proof}

\begin{theorem}\label{thm:basis}
Fix $D = (X, E, s, t)$, $\phi : X \to U$, and a spanning forest $T$. Let $\{r_1, \ldots, r_{\dphi}\}$ be any $\F$-basis of $\Zsp_{\mathrm{alg}} := \Img(B_D) \cap \Ker(\hphi)$, and let $\{\zeta_{r_1}, \ldots, \zeta_{r_{\dphi}}\}$ be lifts as in Lemma \ref{lem:algebraic-lift}. Then the set
\begin{equation}\label{eq:basis-extended}
\mathcal{B}_T^{\mathrm{ext}} := \{Z_e^{(\mathrm{top})}\}_{e \in E \setminus T} \cup \{\zeta_{r_1}, \ldots, \zeta_{r_{\dphi}}\}
\end{equation}
is an $\F$-basis of $\Ker(\partial_{\phi})$.
\end{theorem}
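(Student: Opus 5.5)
The plan is to use the short exact sequence from the proof of Theorem~\ref{thm:rank-nullity},
$$0 \to \Ker(B_D) \to \Ker(\partial_{\phi}) \xrightarrow{B_D} \Img(B_D) \cap \Ker(\hphi) \to 0,$$
and to recognize $\mathcal{B}_T^{\mathrm{ext}}$ as a basis of the subspace together with lifts of a basis of the quotient. The argument is the standard splitting argument for a short exact sequence of vector spaces.

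First, $\mathcal{B}_T^{\mathrm{ext}} \subset \Ker(\partial_{\phi})$. Each $Z_e^{(\mathrm{top})}$ lies in $\Ker(B_D) \subseteq \Ker(\partial_{\phi})$ by Lemma~\ref{lem:topological-cycle} and Proposition~\ref{prop:factorization}. Each $\zeta_{r_i}$ lies in $\Ker(\partial_{\phi})$ by Lemma~\ref{lem:algebraic-lift}.

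Second, linear independence. Suppose
$$\sum_{e \in E\setminus T} \alpha_e Z_e^{(\mathrm{top})} + \sum_{i} \gamma_i \zeta_{r_i} = 0.$$
Apply $B_D$. The topological cycles are killed and $B_D(\zeta_{r_i}) = r_i$, so we get $\sum_i \gamma_i r_i = 0$. Hence all $\gamma_i = 0$, because $\{r_i\}$ is a basis of $\Zsp_{\mathrm{alg}}$. What remains is a vanishing combination of the $Z_e^{(\mathrm{top})}$, so all $\alpha_e = 0$ by Lemma~\ref{lem:topological-cycle}.

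Third, spanning, or equivalently counting. The set has $|E \setminus T| + \dphi = |E| - |X| + c(D) + \dphi$ elements, using $|T| = |X| - c(D)$. By Theorem~\ref{thm:rank-nullity} this equals $\dim_{\F}\Ker(\partial_{\phi})$, so independence already gives a basis.

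For a direct spanning argument instead: given $\xi \in \Ker(\partial_{\phi})$, note that $B_D(\xi) \in \Img(B_D)\cap\Ker(\hphi)$. Write $B_D(\xi) = \sum_i \gamma_i r_i$; then $\xi - \sum_i \gamma_i \zeta_{r_i} \in \Ker(B_D)$, and we expand it in the topological cycles.

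There is no real obstacle. The only point needing care is that the lifts are fixed vectors, not unique ones: any choice satisfying $B_D(\zeta_{r_i}) = r_i$ works, and the independence step uses only this property. One should also note that distinct $e$ give distinct $Z_e^{(\mathrm{top})}$, so no element of the union is counted twice. Finally, no $\zeta_{r_i}$ can coincide with a topological cycle, since $B_D(\zeta_{r_i}) = r_i \neq 0$ while $B_D(Z_e^{(\mathrm{top})}) = 0$. The union therefore really has the stated cardinality.
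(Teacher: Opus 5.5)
Your proposal is correct and follows essentially the same argument as the paper's proof. Both check membership in $\Ker(\partial_{\phi})$, prove linear independence by applying $B_D$ to kill the topological cycles and using $B_D(\zeta_{r_i}) = r_i$, and conclude by comparing the cardinality $|E| - |X| + c(D) + \dphi$ with Theorem~\ref{thm:rank-nullity}. Your direct spanning argument is an optional extra.
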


\begin{proof}
Every element of $\mathcal{B}_T^{\mathrm{ext}}$ lies in $\Ker(\partial_{\phi})$ by Proposition \ref{prop:factorization}, Lemmas \ref{lem:topological-cycle} and \ref{lem:algebraic-lift}.  
We show $\mathcal{B}_T^{\mathrm{ext}}$ is linearly independent. Suppose
$$\sum_{e \in E \setminus T} \alpha_e Z_e^{(\mathrm{top})} + \sum_{i=1}^{\dphi} \gamma_i \zeta_{r_i} = 0, \quad \alpha_e, \gamma_i \in \F.$$
Applying $B_D$ and using $Z_e^{(\mathrm{top})} \in \Ker(B_D)$ together with $B_D(\zeta_{r_i}) = r_i$, we obtain $\sum_{i=1}^{\dphi} \gamma_i r_i = 0$. Since $\{r_1, \ldots, r_{\dphi}\}$ is an $\F$-basis of $\Zsp_{\mathrm{alg}}$, all $\gamma_i = 0$. Substituting back, $\sum_{e \in E \setminus T} \alpha_e Z_e^{(\mathrm{top})} = 0$, and by Lemma \ref{lem:topological-cycle}, all $\alpha_e = 0$.

The topological cycles in $\mathcal{B}_T^{\mathrm{ext}}$ are indexed by $E \setminus T$ and the lifts by $\{1, \ldots, \dphi\}$, and the linear independence above shows all these elements are distinct. By the definition of a spanning forest, $|E \setminus T| = |E| - |X| + c(D)$, so $|\mathcal{B}_T^{\mathrm{ext}}| = |E| - |X| + c(D) + \dphi$. By Theorem \ref{thm:rank-nullity}, this equals $\dim_{\F} \Ker(\partial_{\phi})$. Hence $\mathcal{B}_T^{\mathrm{ext}}$ is an $\F$-basis of $\Ker(\partial_{\phi})$.
\end{proof}

If $U = \F^X$ and $\phi(x) = \mathbf{1}_x$ (the identity labeling), then $\hphi = \id_{\F^X}$, $\Ker(\hphi) = 0$, and $\dphi = 0$. The formula \eqref{eq:main} reduces to the classical formula \eqref{eq:classical-rank}, and Theorem \ref{thm:basis} recovers the classical fundamental cycle basis theorem associated with a spanning forest (\cite[Ch.~14, Section 2]{GodsilRoyle}).

\section{Directed tensor-labeled hypergraphs and the tensor incidence operator}\label{sec:tensor-hg}

In this section we define directed tensor-labeled hypergraphs and apply to them the general theory of Section \ref{sec:general}. The notion subsumes the classical directed hypergraph framework of Gallo, Longo, Nguyen, and Pallottino \cite{GalloLongoNguyenPallottino} and admits a uniform encoding of multiset, ordered, and tensor-valued hyperedge data.

\subsection{Directed tensor-labeled hypergraphs}\label{sec:tensor-prep}

Let $V$ be an $\F$-vector space and $T^k(V) := V^{\otimes k}$ ($T^0(V) := \F$) its $k$-th tensor power. The $\F$-algebra
$$T(V) := \bigoplus_{k \geq 0} T^k(V),$$
equipped with the tensor product $\otimes$ as multiplication, is called the \textbf{tensor algebra} of $V$. When $V = \F^{Q_0}$, the set
\begin{equation}\label{eq:standard-basis}
\mathcal{B} := \{1\} \cup \{u_1 \otimes \cdots \otimes u_k \mid k \geq 1, \; u_i \in Q_0\}
\end{equation}
is an $\F$-basis of $T(\F^{Q_0})$, which we call the \textbf{standard basis}.

\begin{remark}\label{rem:word-space}
The non-unit elements $u_1 \otimes \cdots \otimes u_k$ of $\mathcal{B}$ are in bijection with the words of length $k$ on $Q_0$, and under this bijection $T(\F^{Q_0})$ is $\F$-algebra isomorphic to the free associative algebra $\F\langle Q_0 \rangle$ (the tensor product corresponding to the concatenation of words). However, the proofs of the main results in this paper rely only on the graded decomposition $T(\F^{Q_0}) = \bigoplus_k T^k(\F^{Q_0})$ and on the standard basis $\mathcal{B}$, not on the algebra structure. The word notation for pure tensors is adopted only for convenience. We identify each $v \in Q_0$ with the corresponding standard basis vector $\mathbf{1}_v \in \F^{Q_0} = T^1(\F^{Q_0})$. Under this identification, the non-unit element $u_1 \otimes \cdots \otimes u_k$ of $\mathcal{B}$ denotes the tensor product $\mathbf{1}_{u_1} \otimes \cdots \otimes \mathbf{1}_{u_k} \in T^k(\F^{Q_0})$.
\end{remark}

\begin{definition}
For $k \geq 1$, the \textbf{symmetrization operator} $\Sym_k : T^k(V) \to T^k(V)$ is defined by
\begin{equation}\label{eq:Sym}
\Sym_k(m_1 \otimes \cdots \otimes m_k) := \sum_{\sigma \in S_k} m_{\sigma(1)} \otimes \cdots \otimes m_{\sigma(k)}.
\end{equation}
For $k = 0$, we set $\Sym_0 := \id_{\F}$.
\end{definition}

\begin{lemma}\label{lem:sym-invariance}
For any $\tau \in S_k$ and any $m_1, \ldots, m_k \in V$,
$$\Sym_k(m_{\tau(1)} \otimes \cdots \otimes m_{\tau(k)}) = \Sym_k(m_1 \otimes \cdots \otimes m_k).$$
\end{lemma}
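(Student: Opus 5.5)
The argument is a reindexing of the sum defining $\Sym_k$ by the bijection $\sigma \mapsto \tau\sigma$ of $S_k$. Set $n_i := m_{\tau(i)}$ for $i = 1, \ldots, k$, so the left-hand side is $\Sym_k(n_1 \otimes \cdots \otimes n_k)$. By the definition \eqref{eq:Sym}, applied to the vectors $n_1, \ldots, n_k$, this equals
$$\sum_{\sigma \in S_k} n_{\sigma(1)} \otimes \cdots \otimes n_{\sigma(k)} = \sum_{\sigma \in S_k} m_{\tau(\sigma(1))} \otimes \cdots \otimes m_{\tau(\sigma(k))} = \sum_{\sigma \in S_k} m_{(\tau\sigma)(1)} \otimes \cdots \otimes m_{(\tau\sigma)(k)}.$$

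Left multiplication $\sigma \mapsto \tau\sigma$ is a bijection of the finite group $S_k$, with inverse $\sigma \mapsto \tau^{-1}\sigma$. Substituting $\rho := \tau\sigma$ therefore rewrites the last sum as $\sum_{\rho \in S_k} m_{\rho(1)} \otimes \cdots \otimes m_{\rho(k)}$, which is $\Sym_k(m_1 \otimes \cdots \otimes m_k)$ by definition. The reindexing only permutes the summands of a finite sum, so it is valid over any field, in particular in positive characteristic. The case $k = 0$ is vacuous, since $S_0$ is trivial and $\Sym_0 = \id_{\F}$.

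One point needs care: $\Sym_k$ is defined in \eqref{eq:Sym} only on pure tensors. I will read \eqref{eq:Sym} as the formula for the linear map on all of $T^k(V)$ obtained from the multilinear map $(m_1, \ldots, m_k) \mapsto \sum_{\sigma \in S_k} m_{\sigma(1)} \otimes \cdots \otimes m_{\sigma(k)}$ via the universal property of the tensor product. This map is well defined because each summand is multilinear in $(m_1, \ldots, m_k)$. With this reading the formula holds for arbitrary vectors $m_i \in V$, and the substitution $n_i = m_{\tau(i)}$ above is legitimate. No step is a real obstacle; the only thing to get right is the order of composition ($\tau\sigma$, not $\sigma\tau$). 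Either order would work, since right multiplication is also a bijection of $S_k$.
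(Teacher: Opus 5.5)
Your proof is correct and follows the paper's argument: you rewrite the left side as $\sum_{\sigma \in S_k} m_{\tau\sigma(1)} \otimes \cdots \otimes m_{\tau\sigma(k)}$ and reindex by the bijection $\sigma \mapsto \tau\sigma$ of $S_k$. Your added remark that $\Sym_k$ extends to a well-defined linear map on all of $T^k(V)$, by multilinearity, is a point the paper leaves implicit.
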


\begin{proof}
Since $\sigma \mapsto \tau \sigma$ is a bijection of $S_k$,
\begin{align*}
\Sym_k(m_{\tau(1)} \otimes \cdots \otimes m_{\tau(k)})
&= \sum_{\sigma \in S_k} m_{\tau\sigma(1)} \otimes \cdots \otimes m_{\tau\sigma(k)} \\
&= \sum_{\sigma' \in S_k} m_{\sigma'(1)} \otimes \cdots \otimes m_{\sigma'(k)} \\
&= \Sym_k(m_1 \otimes \cdots \otimes m_k).
\end{align*}
Thus, the assertion follows.
\end{proof}

We note that, in positive characteristic, factorial coefficients appearing in the value of $\Sym_k$ may vanish in $\F$. For instance, if $\mathrm{char}(\F) = 2$, then $\Sym_2(v \otimes v) = 2(v \otimes v) = 0$. A precise characterization of the vanishing condition is given in Proposition \ref{prop:sym-vanishing}~(2).

\begin{definition}\label{def:tensor-hg}
Let $Q_0 = \{v_1, \ldots, v_n\}$ be a finite set of vertices and $Q_1 = \{e_1, \ldots, e_m\}$ a finite set of directed hyperedges. A \textbf{directed tensor-labeled hypergraph} is a triple $\Hbb = (Q_0, Q_1, \beta)$, where
\begin{equation}\label{eq:incidence-assignment}
\beta : \F^{Q_1} \longrightarrow T(\F^{Q_0}) \times T(\F^{Q_0})
\end{equation}
is an $\F$-linear map, called the \textbf{incidence assignment} of $\Hbb$. We abbreviate the image of the basis vector $\mathbf{1}_e$ ($e \in Q_1$) by $\beta(e) := \beta(\mathbf{1}_e) = (A_e, B_e)$, and call $A_e \in T(\F^{Q_0})$ the \textbf{source tensor} and $B_e \in T(\F^{Q_0})$ the \textbf{target tensor} of $e$. The map $\beta$ is completely determined by the data $\{(A_e, B_e)\}_{e \in Q_1}$.
\end{definition}

\subsection{Standard constructions (1)--(6)}

Ordinary graphs and various hypergraphs arise as special cases of Definition \ref{def:tensor-hg}. In what follows, we assume the non-emptiness of the structural data ($k \geq 1$ in (3) and (4); $p, q \geq 1$ in (5) and (6)). Undirected structures are encoded by placing the unit $1 \in T^0(\F^{Q_0})$ of the target tensor.

\begin{enumerate}
\item[(1)] \textbf{Symmetric quadratic encoding of undirected edges.}\footnote{This encoding is not intended to reproduce the classical undirected cycle space directly. Rather, it records a symmetric quadratic boundary label. To recover the classical undirected cycle space one may either choose an orientation for each edge and use Construction (2), or, in characteristic $2$, apply the observation map of Theorem~\ref{thm:F2-recovery}.} Let $Q_0$ be the vertex set and $Q_1$ the edge set, and let $\psi : Q_1 \to 2^{Q_0}$ satisfy $1 \leq |\psi(e)| \leq 2$. For each $e \in Q_1$, set
$$\beta(e) := \begin{cases} (2(v \otimes v), 1) & \text{if } \psi(e) = \{v\} \text{ (a loop)}, \\ 
(u \otimes v + v \otimes u, 1) & \text{if } \psi(e) = \{u, v\}, \; u \neq v. \end{cases}$$

\item[(2)] \textbf{Ordinary directed graph.} Let $D = (Q_0, Q_1, s, t)$ be a directed graph with source $s : Q_1 \to Q_0$ and target $t : Q_1 \to Q_0$. For each $e \in Q_1$, set $\beta(e) := (s(e), t(e))$.

\item[(3)] \textbf{Undirected hypergraph with multiset hyperedges.} Let $\psi$ assign to each $e \in Q_1$ a multiset $\psi(e)$ on $Q_0$, and put $k = |\psi(e)|$. If $\psi(e) = \{\!\{u_1, \ldots, u_k\}\!\}$, set $\beta(e) := (\Sym_k(u_1 \otimes \cdots \otimes u_k), 1)$. By Lemma \ref{lem:sym-invariance}, $\Sym_k(u_1 \otimes \cdots \otimes u_k)$ depends only on the multiset $\psi(e)$. Construction (1) is the special case of (3) obtained by viewing an ordinary edge $\{u, v\}$ ($u \neq v$) as the multiset $\{\!\{u, v\}\!\}$ and a loop $\{v\}$ as the multiset $\{\!\{v, v\}\!\}$.

\item[(4)] \textbf{Hypergraph with ordered tuple hyperedges.} Let $\psi$ assign to each $e \in Q_1$ an ordered tuple $\psi(e) = (u_1, \ldots, u_k)$ of vertices, and set $\beta(e) := (u_1 \otimes \cdots \otimes u_k, 1)$.

\item[(5)] \textbf{Directed hypergraph with multiset components.} Let $\psi_s$ and $\psi_t$ assign to each $e \in Q_1$ a source multiset $\psi_s(e)$ and a target multiset $\psi_t(e)$ on $Q_0$, and put $p = |\psi_s(e)|$, $q = |\psi_t(e)|$. If $\psi_s(e) = \{\!\{u_1, \ldots, u_p\}\!\}$ and $\psi_t(e) = \{\!\{v_1, \ldots, v_q\}\!\}$, set
$$\beta(e) := (\Sym_p(u_1 \otimes \cdots \otimes u_p), \Sym_q(v_1 \otimes \cdots \otimes v_q)).$$

\item[(6)] \textbf{Directed hypergraph with ordered components.} Let $\psi$ assign to each $e \in Q_1$ a source tuple $(u_1, \ldots, u_p)$ and a target tuple $(w_1, \ldots, w_q)$, and set $\beta(e) := (u_1 \otimes \cdots \otimes u_p, w_1 \otimes \cdots \otimes w_q)$. Construction (2) is the case $p = q = 1$.
\end{enumerate}

In Construction (2), $\beta(e) = (s(e), t(e)) \in T^1(\F^{Q_0}) \times T^1(\F^{Q_0})$, so the difference $B_e - A_e$ takes values in $T^1(\F^{Q_0}) = \F^{Q_0}$ and recovers the classical incidence matrix. 
In Construction (1), every hyperedge is encoded in $T^2(\F^{Q_0})$. When $k \geq 2$ in Constructions (3) or (4), or $\max(p, q) \geq 2$ in (5) or (6), the internal structure of a hyperedge is recorded in higher tensors and the difference $B_e - A_e$ involves higher-degree components. 
The cases $k = 1$ in (3) or (4) and $(p, q) = (1, 1)$ in (5) or (6), on the other hand, take values in $T^1(\F^{Q_0})$ and include the situation isomorphic to Construction (2). 
The cycle space $\Zsp(\Hbb)$ of the tensor incidence operator $\partial_\beta$ introduced in the next subsection (Definition \ref{def:tensor-incidence}), in these constructions, captures the algebraic relations among the tensor labels.

We say that $\Hbb$ follows a \textbf{single standard construction} if every hyperedge of $\Hbb$ is given by the same standard construction. Mixed constructions are not treated in this paper.

\begin{remark}\label{rem:divided-power}
Constructions (3) and (5) encode multiset hyperedges through $\Sym_k$. In characteristic zero, $\mathrm{Im}(\Sym_k)$ coincides with the symmetric tensor space $T^k(\F^{Q_0})^{S_k}$. In positive characteristic, by Proposition \ref{prop:sym-vanishing}~(2), $\Sym_k$ may vanish when a vertex multiplicity is at least the characteristic, so the inclusion $\mathrm{Im}(\Sym_k) \subsetneq T^k(\F^{Q_0})^{S_k}$ can be strict; for example, in characteristic $2$ with $V = \F v_1 \oplus \F v_2$, $T^2(V)^{S_2}$ has dimension $3$ while $\mathrm{Im}(\Sym_2)$ is the $1$-dimensional space $\F(v_1 \otimes v_2 + v_2 \otimes v_1)$.

A characteristic-free alternative is the divided power algebra \cite{Roby} $\Gamma(\F^{Q_0}) = \bigoplus_{k \geq 0} \Gamma^k(\F^{Q_0})$, where $\Gamma^k(\F^{Q_0})$ has basis $v_1^{[m_1]} \cdots v_r^{[m_r]}$ ($v_i \in Q_0$ distinct, $\sum_i m_i = k$). The assignment sending $v_1^{[m_1]} \cdots v_r^{[m_r]}$ to the sum of all distinct arrangements of the corresponding multiset gives an isomorphism $\Gamma^k(\F^{Q_0}) \xrightarrow{\sim} T^k(\F^{Q_0})^{S_k}$, under which
$$\Sym_k(v_1^{\otimes m_1} \otimes \cdots \otimes v_r^{\otimes m_r}) = \left(\prod_i m_i!\right) v_1^{[m_1]} \cdots v_r^{[m_r]}$$
holds in any characteristic. The two formulations coincide in characterisitc zero and diverge precisely where $\prod_i m_i!$ vanishes.

We adopt the symmetric-tensor formulation in Constructions (3) and (5). The vanishing theorem of Section \ref{sec:defect} asserts $\delta(\Hbb) = 0$ even under the positive-characteristic degeneration in which $\Vmacro$ (see, Definition \ref{def:macro}) contains zero tensors; since this degeneration does not occur in the divided-power encoding, the non-triviality of the theorem is specific to the symmetric tensor encoding.
\end{remark}

\subsection{The defect invariant and the tensor dimension formula}

Let $\Hbb = (Q_0, Q_1, \beta)$ be a directed tensor-labeled hypergraph.

\begin{definition}\label{def:tensor-incidence}
The $\F$-linear map $\partial_{\beta} : \F^{Q_1} \to T(\F^{Q_0})$ defined by
\begin{equation}\label{eq:tensor-incidence}
\partial_{\beta}(\mathbf{1}_e) := B_e - A_e \quad (e \in Q_1)
\end{equation}
is called the \textbf{tensor-valued incidence map} of $\Hbb$, also referred to as the tensor incidence operator. Its kernel $\Zsp(\Hbb) := \Ker(\partial_{\beta})$ is called the \textbf{tensor cycle space} of $\Hbb$.
\end{definition}

\begin{definition}\label{def:macro}
The subset
$$\Vmacro := \{A_e \mid e \in Q_1\} \cup \{B_e \mid e \in Q_1\} \subset T(\F^{Q_0})$$
is called the \textbf{set of boundary tensors} of $\Hbb$. The directed multigraph
$$\Hmacro := (\Vmacro, Q_1, s_{\mathrm{macro}}, t_{\mathrm{macro}}), \quad s_{\mathrm{macro}}(e) := A_e, \quad t_{\mathrm{macro}}(e) := B_e,$$
with vertex set $\Vmacro$ and edge set $Q_1$, is called the \textbf{associated macrograph} of $\Hbb$. We write $\cmacro$ for the number of weakly connected components of $\Hmacro$, and $\Bmacro : \F^{Q_1} \to \F^{\Vmacro}$ for the incidence matrix of $\Hmacro$.
\end{definition}

\begin{definition}\label{def:evaluation}
The $\F$-linear map $\hphi : \F^{\Vmacro} \to T(\F^{Q_0})$ defined by $\mathbf{1}_w \mapsto w$ ($w \in \Vmacro$) is called the \textbf{evaluation map}. Its image is denoted $W_{\Hbb} := \Span_{\F}(\Vmacro)$.
\end{definition}

\begin{proposition}\label{prop:tensor-factor}
The equation $\partial_{\beta} = \hphi \circ \Bmacro$ holds.
\end{proposition}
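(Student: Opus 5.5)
The proof reduces to Proposition \ref{prop:factorization} applied to the associated macrograph. I will check the identity on the standard basis $\{\mathbf{1}_e\}_{e \in Q_1}$ of $\F^{Q_1}$. Both $\partial_\beta$ and $\hphi \circ \Bmacro$ are $\F$-linear maps $\F^{Q_1} \to T(\F^{Q_0})$, so agreement on the basis gives equality on all of $\F^{Q_1}$.

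For $e \in Q_1$, Definition \ref{def:macro} gives $s_{\mathrm{macro}}(e) = A_e$ and $t_{\mathrm{macro}}(e) = B_e$, both elements of $\Vmacro$. Hence
$$\Bmacro(\mathbf{1}_e) = \mathbf{1}_{B_e} - \mathbf{1}_{A_e} \in \F^{\Vmacro}.$$
Applying the evaluation map of Definition \ref{def:evaluation}, which sends $\mathbf{1}_w \mapsto w$, yields
$$\hphi(\Bmacro(\mathbf{1}_e)) = B_e - A_e = \partial_\beta(\mathbf{1}_e).$$
Equivalently, the whole statement is the special case of Proposition \ref{prop:factorization} with $D = \Hmacro$, $U = T(\F^{Q_0})$ and labeling $\phi : \Vmacro \to T(\F^{Q_0})$ given by inclusion. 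Indeed, $\partial_\phi$ for this labeling coincides with $\partial_\beta$ on each basis vector.

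The only point needing care is a degenerate case. It can happen that $A_e = B_e$ as tensors, which makes $e$ a loop in $\Hmacro$. It can also happen that $A_e = 0$ or $B_e = 0$, for instance when $\Sym_k$ vanishes in positive characteristic. In these cases $\Vmacro$ is a \emph{set}, so coinciding tensors are identified as a single macro-vertex, and $0$ may itself be a vertex. For a loop, $\mathbf{1}_{B_e} - \mathbf{1}_{A_e} = 0 = B_e - A_e$, so the computation is unaffected. If $0 \in \Vmacro$, then $\hphi(\mathbf{1}_0) = 0$, which is still consistent. So there is no real obstacle; the argument is a direct verification.
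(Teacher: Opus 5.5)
Your proof is correct and follows the paper's route: the paper simply applies Proposition~\ref{prop:factorization} with $D = \Hmacro$, $U = T(\F^{Q_0})$ and $\phi(w) = w$. Your explicit basis-vector check, and your remarks on loops and on the zero tensor lying in $\Vmacro$, are accurate elaborations of that same reduction.
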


\begin{proof}
Apply Proposition \ref{prop:factorization} with $D = \Hmacro$, $\phi(w) = w$, $U = T(\F^{Q_0})$.
\end{proof}

\begin{definition}\label{def:tensor-defect}
The \textbf{defect invariant} of $\Hbb$ is
$$\delta(\Hbb) := \dim_{\F}\left(\Img(\Bmacro) \cap \Ker(\hphi)\right).$$
\end{definition}

\begin{definition}\label{def:cycle-decomposition}
For a directed tensor-labeled hypergraph $\Hbb$, the subspaces
$$\Zsp_{\mathrm{top}}(\Hbb) := \Ker(\Bmacro) \subseteq \F^{Q_1}, \quad \Zsp_{\mathrm{alg}}(\Hbb) := \Img(\Bmacro) \cap \Ker(\hphi)$$
are called the \textbf{topological cycle space} and the \textbf{algebraic cycle space} of $\Hbb$, respectively.
\end{definition}

\begin{proposition}\label{prop:cycle-ses}
The inclusion $\Zsp_{\mathrm{top}}(\Hbb) \subseteq \Zsp(\Hbb)$ holds. Moreover,  the restriction of $\Bmacro$ yields the short exact sequence
$$0 \longrightarrow \Zsp_{\mathrm{top}}(\Hbb) \longrightarrow \Zsp(\Hbb) \xrightarrow{\Bmacro} \Zsp_{\mathrm{alg}}(\Hbb) \longrightarrow 0.$$
In particular, $\Zsp(\Hbb) / \Zsp_{\mathrm{top}}(\Hbb) \cong \Zsp_{\mathrm{alg}}(\Hbb)$.
\end{proposition}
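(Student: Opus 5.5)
This is the tensor specialization of Theorem~\ref{thm:rank-nullity}. I would take the directed multigraph to be $D=\Hmacro$ with vertex set $X=\Vmacro$, and the labeling to be $\phi(w)=w$ in $U=T(\F^{Q_0})$, so that $\hphi$ is the evaluation map of Definition~\ref{def:evaluation}. Proposition~\ref{prop:tensor-factor} gives $\partial_\beta=\hphi\circ\Bmacro$. Unwinding the definitions then yields $\Zsp(\Hbb)=\Ker(\partial_\beta)$, $\Zsp_{\mathrm{top}}(\Hbb)=\Ker(\Bmacro)$ and $\Zsp_{\mathrm{alg}}(\Hbb)=\Img(\Bmacro)\cap\Ker(\hphi)$.

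For the inclusion $\Zsp_{\mathrm{top}}(\Hbb)\subseteq\Zsp(\Hbb)$, I would take $\xi$ with $\Bmacro\xi=0$. Then $\partial_\beta\xi=\hphi(\Bmacro\xi)=0$. The first map in the sequence is this inclusion, which is injective.

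Next I would check that $\Bmacro$ maps $\Zsp(\Hbb)$ into $\Zsp_{\mathrm{alg}}(\Hbb)$. If $\partial_\beta\xi=0$, then $\Bmacro\xi$ lies in $\Img(\Bmacro)$ trivially, and it lies in $\Ker(\hphi)$ because $\hphi(\Bmacro\xi)=\partial_\beta\xi=0$.

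For surjectivity, I would take $r=\Bmacro\eta\in\Img(\Bmacro)\cap\Ker(\hphi)$. Then $\partial_\beta\eta=\hphi(r)=0$, so $\eta\in\Zsp(\Hbb)$, and $\eta$ maps to $r$. Alternatively, Lemma~\ref{lem:algebraic-lift} applied to $\Hmacro$ supplies an explicit lift. I would use the direct argument, since it is shorter.

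For exactness in the middle, the kernel of $\Bmacro$ restricted to $\Zsp(\Hbb)$ is $\Ker(\Bmacro)\cap\Zsp(\Hbb)$. By the inclusion already proved, this equals $\Ker(\Bmacro)=\Zsp_{\mathrm{top}}(\Hbb)$, which is exactly the image of the first map.

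The isomorphism $\Zsp(\Hbb)/\Zsp_{\mathrm{top}}(\Hbb)\cong\Zsp_{\mathrm{alg}}(\Hbb)$ then follows from the first isomorphism theorem applied to this surjection.

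None of these steps is a real obstacle. The only point needing care is noting that the middle map is well defined, i.e.\ that $\Bmacro$ lands in $\Ker(\hphi)$; this rests entirely on the factorization of Proposition~\ref{prop:tensor-factor}.
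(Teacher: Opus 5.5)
Your proposal is correct and follows essentially the same route as the paper: you derive the inclusion from the factorization $\partial_\beta = \hphi \circ \Bmacro$, identify the kernel of the restricted $\Bmacro$ as $\Zsp_{\mathrm{top}}(\Hbb)$ and its image as $\Img(\Bmacro)\cap\Ker(\hphi)$, and conclude with the first isomorphism theorem. Your separate checks of well-definedness and surjectivity just spell out the paper's one-line identity $\Bmacro(\Ker(\hphi\circ\Bmacro)) = \Img(\Bmacro)\cap\Ker(\hphi)$.
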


\begin{proof}
It follows from Proposition \ref{prop:tensor-factor} that 
\[ \Zsp_{\mathrm{top}}(\Hbb) = \Ker(\Bmacro) \subseteq \Ker(\hphi \circ \Bmacro) = \Ker(\partial_{\beta}) = \Zsp(\Hbb). \]
The restriction $\Bmacro|_{\Zsp(\Hbb)} : \Zsp(\Hbb) \to \F^{\Vmacro}$ has kernel $\Zsp_{\mathrm{top}}(\Hbb)$, and image $\Bmacro(\Ker(\hphi \circ \Bmacro)) = \Img(\Bmacro) \cap \Ker(\hphi) = \Zsp_{\mathrm{alg}}(\Hbb)$. The claim follows from the first isomorphism theorem.
\end{proof}

The quotient $\Zsp(\Hbb) / \Zsp_{\mathrm{top}}(\Hbb)$ is, via Proposition \ref{prop:cycle-ses}, canonically isomorphic to the subspace $\Zsp_{\mathrm{alg}}(\Hbb) \subseteq \F^{\Vmacro}$; however, the corresponding lift to a subspace of $\F^{Q_1}$ depends on the choice of a spanning forest (cf.\ Lemma \ref{lem:algebraic-lift}) and is therefore non-canonical. When we speak of ``algebraic cycles in $\Zsp(\Hbb)$'' in this paper, the canonical object is the quotient.

\begin{lemma}\label{lem:Bmacro-image}
For $v = \sum_{w \in \Vmacro} v_w \mathbf{1}_w \in \F^{\Vmacro}$ with $v_w \in \F$, $v \in \Img(\Bmacro)$ holds if and only if
\[ \sum_{w \in C} v_w = 0 \]
for every weakly connected component $C$ of $\Hmacro$.
\end{lemma}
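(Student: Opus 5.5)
The claim characterizes $\Img(\Bmacro)$ by component sums. I will prove both inclusions directly, using only the definition of $\Bmacro$ and the signed path vectors from Section~\ref{sec:general} applied to $D = \Hmacro$.

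For necessity, consider for each weakly connected component $C$ of $\Hmacro$ the linear functional $\sigma_C : \F^{\Vmacro} \to \F$, $\sigma_C(v) = \sum_{w \in C} v_w$. For every hyperedge $e \in Q_1$, the endpoints $A_e$ and $B_e$ lie in the same component, by the definition of weak connectivity. Hence
\[ \sigma_C(\Bmacro(\mathbf{1}_e)) = \sigma_C(\mathbf{1}_{B_e} - \mathbf{1}_{A_e}) \]
equals $1 - 1 = 0$ if both endpoints lie in $C$, and $0$ otherwise. This also covers loops, where $A_e = B_e$ and $\Bmacro(\mathbf{1}_e) = 0$. By linearity, $\sigma_C$ vanishes on $\Img(\Bmacro)$.

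For sufficiency, suppose every $\sigma_C(v) = 0$. Fix a spanning forest $T$ of $\Hmacro$ and a basepoint $r_C \in C$ for each component. Set
\[ \zeta := \sum_C \sum_{w \in C \setminus \{r_C\}} v_w\, F[r_C, w]. \]
This is exactly the computation in the proof of Lemma~\ref{lem:algebraic-lift}, which used only the hypothesis that component sums vanish and never used membership in $\Ker(\hphi)$. By \eqref{eq:path-telescope}, $\Bmacro(\zeta) = \sum_C \sum_{w \neq r_C} v_w(\mathbf{1}_w - \mathbf{1}_{r_C})$, and the vanishing sums give $-\sum_{w\neq r_C} v_w = v_{r_C}$, so $\Bmacro(\zeta) = v$.

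Alternatively, one can use a dimension count. The subspace cut out by the $\cmacro$ independent functionals $\sigma_C$ has dimension $|\Vmacro| - \cmacro$. These functionals are independent because their supports are disjoint and nonempty. By Lemma~\ref{lem:classical-rank}, this dimension equals $\rank(\Bmacro)$, so the inclusion proved in the first step is an equality.

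There is no real obstacle. The only point requiring care is that a spanning forest exists and that the path vectors are well defined, which is standard, or avoiding them entirely via the dimension count. I would present the dimension-count version for brevity.
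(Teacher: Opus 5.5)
Your proposal is correct, and the dimension-count version you say you would present is exactly the paper's proof: each generator $\mathbf{1}_{B_e}-\mathbf{1}_{A_e}$ satisfies the component-sum condition, and the inclusion is then an equality because $\rank_{\F}(\Bmacro)=|\Vmacro|-\cmacro$ by Lemma~\ref{lem:classical-rank}. Your alternative spanning-forest argument for sufficiency is also valid. It reuses the computation from Lemma~\ref{lem:algebraic-lift}, which, as you note, only needs the vanishing component sums, and it has the advantage of producing an explicit preimage.
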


\begin{proof}
Since $A_e$ and $B_e$ lie in the same weakly connected component of $\Hmacro$, each generator $\Bmacro(\mathbf{1}_e) = \mathbf{1}_{B_e} - \mathbf{1}_{A_e}$ satisfies the stated condition, and so does every element of $\Img(\Bmacro)$. By Lemma \ref{lem:classical-rank}, $\rank_{\F}(\Bmacro) = |\Vmacro| - \cmacro$, which coincides with the dimension of the subspace defined by the condition. The inclusion is therefore an equality.
\end{proof}

\begin{theorem}\label{thm:tensor-dim}
For any directed tensor-labeled hypergraph $\Hbb$, we have
\begin{equation}\label{eq:tensor-dim}
\dim_{\F} \Zsp(\Hbb) = |Q_1| - |\Vmacro| + \cmacro + \delta(\Hbb).
\end{equation}
\end{theorem}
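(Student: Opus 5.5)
This is a direct specialization of Theorem~\ref{thm:rank-nullity}, so the plan is simply to place $\Hbb$ inside the general setting of Section~\ref{sec:general}. We take the directed multigraph $D := \Hmacro = (\Vmacro, Q_1, s_{\mathrm{macro}}, t_{\mathrm{macro}})$ and the label space $U := T(\F^{Q_0})$. The labeling is the inclusion $\phi : \Vmacro \to T(\F^{Q_0})$, $\phi(w) = w$, whose $\F$-linear extension is exactly the evaluation map $\hphi$ of Definition~\ref{def:evaluation}.

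First we observe that $\Hmacro$ is a finite directed multigraph. $\Vmacro$ is finite because $|\Vmacro| \le 2|Q_1|$, and loops (when $A_e = B_e$) and parallel edges are permitted. One point needs a word of care. $\Vmacro$ is a set of tensors, so distinct hyperedges with equal source tensors share a vertex, and a zero tensor (which can arise from symmetric degeneration in positive characteristic) is just an ordinary element of $\Vmacro$. None of this conflicts with Section~\ref{sec:general}, which makes no assumption on the labels.

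Next we invoke Proposition~\ref{prop:tensor-factor}, which gives $\partial_\beta = \hphi \circ \Bmacro$, so that $\partial_\beta$ is precisely the vector-valued incidence map $\partial_\phi$ of this labeled multigraph. Theorem~\ref{thm:rank-nullity} then yields
\[
\dim_\F \Ker(\partial_\beta) = |Q_1| - |\Vmacro| + c(\Hmacro) + \dim_\F\bigl(\Img(\Bmacro)\cap\Ker(\hphi)\bigr).
\]
By Definitions~\ref{def:macro} and~\ref{def:tensor-defect}, the last two terms are $\cmacro$ and $\delta(\Hbb)$, which is the claimed formula.

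There is no real obstacle; the only step deserving attention is the identification of the data above. Alternatively, the formula follows directly from Proposition~\ref{prop:cycle-ses} combined with Lemma~\ref{lem:classical-rank}: one has $\dim \Zsp(\Hbb) = \dim \Zsp_{\mathrm{top}}(\Hbb) + \dim \Zsp_{\mathrm{alg}}(\Hbb)$ and $\dim \Zsp_{\mathrm{top}}(\Hbb) = |Q_1| - |\Vmacro| + \cmacro$. We would present this second route as a one-line check.
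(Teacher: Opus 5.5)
Your proposal is correct and is the paper's own proof: it specializes Theorem~\ref{thm:rank-nullity} to $D=\Hmacro$, $U=T(\F^{Q_0})$, $\phi(w)=w$, using Proposition~\ref{prop:tensor-factor}. Your alternative via Proposition~\ref{prop:cycle-ses} and Lemma~\ref{lem:classical-rank} is also valid; it just unpacks the same short exact sequence that appears in the proof of Theorem~\ref{thm:rank-nullity}.
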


\begin{proof}
Apply Theorem \ref{thm:rank-nullity} together with Proposition \ref{prop:tensor-factor}, with $D = \Hmacro$, $X = \Vmacro$, $E = Q_1$, $U = T(\F^{Q_0})$, and $\phi(w) = w$.
\end{proof}

\begin{proposition}\label{prop:delta-iso}
The defect invariant $\delta(\Hbb)$ depends only on the isomorphism class of $\Hbb$. That is, if $\Hbb = (Q_0, Q_1, \beta)$ and $\Hbb' = (Q_0', Q_1', \beta')$ admit bijections $f : Q_0 \to Q_0'$ and $g : Q_1 \to Q_1'$ such that the induced tensor algebra isomorphism $T(f) : T(\F^{Q_0}) \to T(\F^{Q_0'})$ satisfies
$$(T(f) \times T(f)) \circ \beta = \beta' \circ g,$$
where $g$ also denotes its $\F$-linear extension $\F^{Q_1} \to \F^{Q_1'}$, then $\delta(\Hbb) = \delta(\Hbb')$.
\end{proposition}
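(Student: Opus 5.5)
The plan is to transport all the data along the isomorphism $T(f)$ and check that each ingredient in the definition of $\delta$ is carried to the corresponding ingredient for $\Hbb'$. Write $\Phi := T(f)$. It is an $\F$-linear isomorphism of $T(\F^{Q_0})$ onto $T(\F^{Q_0'})$, because it is induced functorially from the linear isomorphism $\F^{Q_0} \to \F^{Q_0'}$, $\mathbf{1}_v \mapsto \mathbf{1}_{f(v)}$, and it has inverse $T(f^{-1})$. Evaluating the hypothesis $(\Phi \times \Phi)\circ\beta = \beta'\circ g$ on $\mathbf{1}_e$ gives
\[
A'_{g(e)} = \Phi(A_e), \qquad B'_{g(e)} = \Phi(B_e) \quad (e \in Q_1).
\]
Since $\Phi$ is injective, it restricts to a bijection $\Phi|_{\Vmacro} : \Vmacro \to \Vmacro'$; surjectivity onto $\Vmacro'$ holds because $g$ is onto $Q_1'$.

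Together with $g : Q_1 \to Q_1'$, this bijection is an isomorphism of directed multigraphs $\Hmacro \cong \Hmacro'$, since source and target are preserved by the displayed equations. Let $P : \F^{\Vmacro} \to \F^{\Vmacro'}$ be the permutation isomorphism $\mathbf{1}_w \mapsto \mathbf{1}_{\Phi(w)}$. Checking on basis vectors gives two intertwining identities:
\[
P \circ \Bmacro = \Bmacro' \circ g, \qquad \hphi' \circ P = \Phi \circ \hphi .
\]
For the first, $P(\mathbf{1}_{B_e}-\mathbf{1}_{A_e}) = \mathbf{1}_{B'_{g(e)}} - \mathbf{1}_{A'_{g(e)}}$. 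For the second, both sides send $\mathbf{1}_w$ to $\Phi(w)$.

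The first identity and the bijectivity of $g$ give $P(\Img \Bmacro) = \Img \Bmacro'$. The second identity and the injectivity of $\Phi$ give $P(\Ker \hphi) = \Ker \hphi'$. Since $P$ is bijective, it preserves intersections, so $P$ maps $\Img(\Bmacro)\cap\Ker(\hphi)$ isomorphically onto $\Img(\Bmacro')\cap\Ker(\hphi')$. Hence $\delta(\Hbb)=\delta(\Hbb')$.

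Alternatively, the dimension formula of Theorem~\ref{thm:tensor-dim} would give the same conclusion. Here $\partial_{\beta'}\circ g = \Phi\circ\partial_\beta$, so $\dim \Zsp$ is preserved, and the combinatorial quantities $|Q_1|$, $|\Vmacro|$ and $\cmacro$ are preserved by the macrograph isomorphism. Subtracting then yields equality of the defects.

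The only step needing care is the claim that $\Phi$ restricts to a bijection on boundary tensors. Distinct boundary tensors must not collapse, and this is exactly where the injectivity of $T(f)$ is used; I expect no further obstacle.
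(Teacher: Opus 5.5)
Your proposal is correct and follows the same transport-of-structure argument as the paper. The bijections induce isomorphisms $\F^{\Vmacro}\cong\F^{\Vmacro'}$ and $\F^{Q_1}\cong\F^{Q_1'}$ that intertwine $\Bmacro$ with $\Bmacro'$ and $\hphi$ with $\hphi'$, so $\Img(\Bmacro)\cap\Ker(\hphi)$ is carried isomorphically onto its primed counterpart. You make explicit what the paper leaves implicit: injectivity of $T(f)$ and surjectivity of $g$ give the bijection $\Vmacro\to\Vmacro'$, and the two identities $P\circ\Bmacro=\Bmacro'\circ g$ and $\hphi'\circ P=T(f)\circ\hphi$ carry the argument; your alternative via Theorem~\ref{thm:tensor-dim} is also valid.
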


\begin{proof}
The bijections $f$ and $g$ induce $\F$-linear isomorphisms $\F^{\Vmacro} \cong \F^{\Vmacro'}$ and $\F^{Q_1} \cong \F^{Q_1'}$ intertwining $\Bmacro$ with $B_{\mathrm{macro}}'$ and $\hphi$ with $\hat\phi'$. Hence the subspace $\Img(\Bmacro) \cap \Ker(\hphi)$ is mapped isomorphically onto $\Img(B_{\mathrm{macro}}') \cap \Ker(\hat\phi')$, and their dimensions agree.
\end{proof}

\begin{corollary}\label{cor:tensor-defect-rank-drop}
Let $\Hbb$ be a directed tensor-labeled hypergraph. Then
$$\delta(\Hbb) = \left(|\Vmacro| - \cmacro\right) - \rank_{\F}(\partial_{\beta}).$$
\end{corollary}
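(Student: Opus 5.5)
This is the tensor-valued specialization of Corollary~\ref{cor:defect-rank-drop}, and I will derive it by instantiating the general result. Take $D = \Hmacro$, $X = \Vmacro$, $E = Q_1$, $U = T(\F^{Q_0})$, and $\phi(w) = w$, as in the proof of Theorem~\ref{thm:tensor-dim}. With these choices $\hphi$ is the evaluation map of Definition~\ref{def:evaluation}, and $B_D = \Bmacro$. By Proposition~\ref{prop:tensor-factor} we have $\partial_\phi = \hphi \circ \Bmacro = \partial_\beta$.

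The defect $\delta_\phi$ of this labeling is, by Definition~\ref{def:defect}, $\dim_\F(\Img(\Bmacro) \cap \Ker(\hphi))$. This is exactly $\delta(\Hbb)$ as given in Definition~\ref{def:tensor-defect}.

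Corollary~\ref{cor:defect-rank-drop} then reads
$$\delta(\Hbb) = \bigl(|\Vmacro| - c(\Hmacro)\bigr) - \rank_\F(\partial_\beta).$$
Since $c(\Hmacro) = \cmacro$ by definition, this is the claimed formula.

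As a cross-check, the same identity follows from Theorem~\ref{thm:tensor-dim}. Rank--nullity for $\partial_\beta$ on $\F^{Q_1}$ gives $\rank_\F(\partial_\beta) = |Q_1| - \dim_\F \Zsp(\Hbb)$. Substituting \eqref{eq:tensor-dim} for $\dim_\F \Zsp(\Hbb)$ gives $\rank_\F(\partial_\beta) = |\Vmacro| - \cmacro - \delta(\Hbb)$, which rearranges to the statement.

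There is no real obstacle here. The only point needing care is that the general theory was stated for an arbitrary vector space $U$, which may be infinite-dimensional like $T(\F^{Q_0})$. This causes no problem: $\Img(\partial_\beta) = \hphi(\Img(\Bmacro))$ lies in the finite-dimensional span $W_{\Hbb}$, so all ranks involved are finite.
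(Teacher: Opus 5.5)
Your proposal is correct and matches the paper's proof: the paper likewise applies Corollary~\ref{cor:defect-rank-drop} with $D = \Hmacro$ and $\phi(w) = w$, together with $\rank_{\F}(\Bmacro) = |\Vmacro| - \cmacro$ from Lemma~\ref{lem:classical-rank}. Your rank--nullity cross-check via Theorem~\ref{thm:tensor-dim} is also valid.
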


\begin{proof}
Apply Corollary \ref{cor:defect-rank-drop} with $D = \Hmacro$ and $\partial_{\phi} = \partial_{\beta}$. By Lemma \ref{lem:classical-rank}, $\rank_{\F}(\Bmacro) = |\Vmacro| - \cmacro$.
\end{proof}

\begin{corollary}\label{cor:tensor-affine-defect}
For each weakly connected component $C$ of $\Hmacro$, fix a basepoint tensor $r_C \in \Vmacro$ and define the $\F$-linear map
$$A_{\Hbb} : \bigoplus_C \F^{C \setminus \{r_C\}} \longrightarrow T(\F^{Q_0}), \quad A_{\Hbb}(\mathbf{1}_w) := w - r_C.$$
Then $\delta(\Hbb) = \dim_{\F} \Ker(A_{\Hbb})$, and $\delta(\Hbb) = 0$ if and only if the family $(w - r_C)_{C, w \in C \setminus \{r_C\}}$ is $\F$-linearly independent in $T(\F^{Q_0})$. In particular, if $\Hmacro$ is connected, then $\delta(\Hbb) = 0$ if and only if $\Vmacro$ is affinely independent in $T(\F^{Q_0})$.
\end{corollary}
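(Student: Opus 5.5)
This is Proposition~\ref{prop:affine-defect} specialised to the macrograph, so the plan is to reduce to that result.

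Take $D = \Hmacro$, $X = \Vmacro$, $E = Q_1$ and $U = T(\F^{Q_0})$, with labeling $\phi(w) = w$. Its linear extension is exactly the evaluation map $\hphi$ of Definition~\ref{def:evaluation}. By Proposition~\ref{prop:tensor-factor} the associated incidence map $\partial_\phi$ is $\partial_\beta$. By Definition~\ref{def:tensor-defect}, $\delta(\Hbb)$ coincides with $\dphi$ for this labeling.

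With these identifications, the map $A_\phi$ of Proposition~\ref{prop:affine-defect} becomes the map $A_{\Hbb}$ of the statement, since $A_\phi(\mathbf{1}_w) = \phi(w) - \phi(r_C) = w - r_C$. The domains also agree, because the weakly connected components of $D$ are those of $\Hmacro$ and the basepoints are chosen the same way.

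The three assertions then follow from Proposition~\ref{prop:affine-defect}:
\begin{enumerate}
\item[(1)] gives $\delta(\Hbb) = \dim_\F \Ker(A_{\Hbb})$.
\item[(2)] gives that $\delta(\Hbb) = 0$ if and only if the family $(w - r_C)_{C,\, w \in C \setminus \{r_C\}}$ is linearly independent.
\item[(3)] gives the connected case, once we note that the family $(\phi(w))_{w \in \Vmacro}$ is just $\Vmacro$ indexed by itself.
\end{enumerate}

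One point needs care. $\Vmacro$ is a set of tensors, so distinct vertices of $\Hmacro$ are distinct tensors. Hence "the family is affinely independent" is the same as "the set $\Vmacro$ is affinely independent": no repeated labels are silently merged.

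The possible appearance of the zero tensor in $\Vmacro$ causes no trouble, because it is simply one vertex among others. There is no real obstacle here; the only work is this bookkeeping of identifications.
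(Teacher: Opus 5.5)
Your proposal is correct and is exactly the paper's argument: the paper's proof is the one line "apply Proposition~\ref{prop:affine-defect} with $D = \Hmacro$, $U = T(\F^{Q_0})$, and $\phi(w) = w$." Your extra bookkeeping (taking $r_C \in C$, noting that $\Vmacro$ is a set so no labels repeat, and that a zero tensor in $\Vmacro$ is just another vertex) is accurate and makes explicit what that line leaves implicit.
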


\begin{proof}
Apply Proposition \ref{prop:affine-defect} with $D = \Hmacro$, $U = T(\F^{Q_0})$, and $\phi(w) = w$.
\end{proof}

\begin{theorem}\label{thm:tensor-basis}
Fix a spanning forest $T \subset Q_1$ of $\Hmacro$. Let $\{r_1, \ldots, r_{\delta}\}$, where $\delta = \delta(\Hbb)$, be any $\F$-basis of $\Zsp_{\mathrm{alg}}(\Hbb)$, and let $\{\zeta_{r_1}, \ldots, \zeta_{r_{\delta}}\}$ be the corresponding lifts provided by Lemma \ref{lem:algebraic-lift}. Then the set
\begin{equation}
\mathcal{B}_T^{\mathrm{ext}} := \{Z_e^{(\mathrm{top})}\}_{e \in Q_1 \setminus T} \cup \{\zeta_{r_1}, \ldots, \zeta_{r_{\delta}}\}
\end{equation}
is an $\F$-basis of $\Zsp(\Hbb)$.
\end{theorem}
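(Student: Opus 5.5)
This is a direct specialization of Theorem~\ref{thm:basis}, so the plan is to instantiate the general data and check that every hypothesis of that theorem holds. Take $D := \Hmacro = (\Vmacro, Q_1, s_{\mathrm{macro}}, t_{\mathrm{macro}})$, $X := \Vmacro$, $E := Q_1$, $U := T(\F^{Q_0})$, and let $\phi : \Vmacro \to T(\F^{Q_0})$ be the inclusion $\phi(w) = w$. With these choices:
\begin{itemize}
\item[] $\hphi$ is the evaluation map of Definition~\ref{def:evaluation};
\item[] $B_D = \Bmacro$;
\item[] $\partial_\phi = \hphi \circ \Bmacro = \partial_\beta$ by Proposition~\ref{prop:tensor-factor}, so $\Ker(\partial_\phi) = \Zsp(\Hbb)$;
\item[] $\Img(B_D) \cap \Ker(\hphi) = \Zsp_{\mathrm{alg}}(\Hbb)$ and $\dphi = \delta(\Hbb)$, by Definitions~\ref{def:tensor-defect} and~\ref{def:cycle-decomposition}.
\end{itemize}

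Next, check that the auxiliary objects coincide. A spanning forest $T \subset Q_1$ of $\Hmacro$ is by definition a spanning forest of $D$. Hence the topological cycles $Z_e^{(\mathrm{top})} = \mathbf{1}_e - F[A_e, B_e]$ of Definition~\ref{def:spanning-forest} are exactly those appearing in the statement. Likewise, the lifts $\zeta_{r_i}$ are the ones constructed in Lemma~\ref{lem:algebraic-lift} for this $D$ and $\phi$.

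Theorem~\ref{thm:basis} then says that $\{Z_e^{(\mathrm{top})}\}_{e \in Q_1 \setminus T} \cup \{\zeta_{r_1}, \ldots, \zeta_{r_\delta}\}$ is an $\F$-basis of $\Ker(\partial_\phi) = \Zsp(\Hbb)$. As a consistency check, its cardinality $|Q_1| - |\Vmacro| + \cmacro + \delta(\Hbb)$ agrees with Theorem~\ref{thm:tensor-dim}.

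The only point that needs any care is a mild technicality: $\Vmacro$ is a \emph{set} of tensors, possibly containing $0$ or containing coincidences $A_e = B_{e'}$. This causes no problem, because $\F^{\Vmacro}$ is the free space on the set of distinct elements, so $D$ is a genuine finite multigraph (loops allowed when $A_e = B_e$) and the general theory applies verbatim. Beyond this, there is no real obstacle.
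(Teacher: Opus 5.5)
Your proposal is correct and takes the same route as the paper, whose proof is the one-line instruction to apply Theorem~\ref{thm:basis} directly with $D = \Hmacro$, $U = T(\F^{Q_0})$ and $\phi(w) = w$. The identifications you spell out ($\partial_\phi = \partial_\beta$ via Proposition~\ref{prop:tensor-factor}, $\dphi = \delta(\Hbb)$, and $\Ker(\partial_\phi) = \Zsp(\Hbb)$) are exactly what that application requires.
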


\begin{proof}
Apply Theorem \ref{thm:basis} directly.
\end{proof}

\section{Vanishing of the defect invariant for standard constructions}\label{sec:defect}

The short exact sequence
$$0 \to \Zsp_{\mathrm{top}}(\Hbb) \to \Zsp(\Hbb) \to \Zsp_{\mathrm{alg}}(\Hbb) \to 0$$
of Proposition \ref{prop:cycle-ses} decomposes $\Zsp(\Hbb)$ into a topological component $\Zsp_{\mathrm{top}}(\Hbb)$, of dimension $|Q_1| - |\Vmacro| + \cmacro$, and an algebraic component $\Zsp_{\mathrm{alg}}(\Hbb)$, of dimension $\delta(\Hbb)$. The main result of this section is Theorem \ref{thm:defect-vanishing}, which asserts that $\delta(\Hbb) = 0$ for any $\Hbb$ following a single standard construction over any field $\F$. Even when symmetric tensors degenerate in positive characteristic, this effect is confined to $\Zsp_{\mathrm{top}}(\Hbb)$ and does not appear in the algebraic part. In Section \ref{subsec:nontrivial-defect} we conversely give examples where $\delta(\Hbb) > 0$ arises from linear-combination labels beyond the standard constructions.

By Corollary~\ref{cor:tensor-affine-defect}, the vanishing of $\delta(\Hbb)$ is equivalent to the $\F$-linear independence of the rooted differences $(w - r_C)_{C, w \in C \setminus \{r_C\}}$ in $T(\F^{Q_0})$. For each standard construction we establish a stronger property---that $\Vmacro \setminus \{0\}$ itself is $\F$-linearly independent (Lemma~\ref{lem:Vmacro-nonzero-indep})---and then deduce $\delta(\Hbb) = 0$ after handling the possibility that the zero tensor belongs to $\Vmacro$, which may occur in positive characteristic.

\subsection{The main vanishing theorem}

\begin{proposition}\label{prop:sym-vanishing}
Let $\mu = \{\!\{v_1, \ldots, v_k\}\!\}$ be a non-empty multiset on $Q_0$, and for each $v \in Q_0$ let $m_v := |\{i \mid v_i = v\}|$ denote the multiplicity of $v$ in $\mu$. Set $v_\mu := v_1 \otimes \cdots \otimes v_k \in T^k(\F^{Q_0})$, and write
$$\mathcal{B}_\mu := \{w_1 \otimes \cdots \otimes w_k \in \mathcal{B} \mid \{\!\{w_1, \ldots, w_k\}\!\} = \mu\} \subset T^k(\F^{Q_0})$$
for the set of standard basis elements whose underlying multiset is $\mu$. Then the following hold.
\begin{enumerate}
\item[\textup{(1)}] The equation
\begin{equation}\label{eq:sym-formula}
\Sym_k(v_\mu) = \left(\prod_v m_v!\right) \sum_{w \in \mathcal{B}_\mu} w
\end{equation}
holds.
\item[\textup{(2)}] $\Sym_k(v_\mu) = 0$ if and only if $\mathrm{char}(\F) = p > 0$ and $m_v \geq p$ for some $v \in Q_0$.
\item[\textup{(3)}] The subfamily $\{\Sym_{|\mu|}(v_\mu)\}_\mu$, indexed by $\mu$ such that $\Sym_{|\mu|}(v_\mu) \neq 0$, is $\F$-linearly independent in $T(\F^{Q_0})$.
\end{enumerate}
\end{proposition}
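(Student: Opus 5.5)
The plan is to prove (1) by counting orbits and stabilizers, and then to read off (2) and (3) from it using the standard basis $\mathcal{B}$ of \eqref{eq:standard-basis}.

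For (1), let $S_k$ act on the index positions $\{1,\dots,k\}$. Each summand $v_{\sigma(1)} \otimes \cdots \otimes v_{\sigma(k)}$ in \eqref{eq:Sym} is a standard basis element whose underlying multiset is $\mu$. Conversely, every $w \in \mathcal{B}_\mu$ arises as such a summand, since any arrangement of $\mu$ can be reached by permuting positions. So the map $\sigma \mapsto v_{\sigma(1)} \otimes \cdots \otimes v_{\sigma(k)}$ is a surjection $S_k \to \mathcal{B}_\mu$.

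Next I compute its fibres. Two permutations $\sigma, \sigma'$ give the same word exactly when $\sigma^{-1}\sigma'$ only permutes positions carrying equal labels. That is, $\sigma^{-1}\sigma'$ lies in the Young subgroup $\prod_v S_{m_v}$, where $S_{m_v}$ permutes the positions $\{i \mid v_i = v\}$. Hence every fibre is a coset of this subgroup and has size $\prod_v m_v!$. Summing over $S_k$ and grouping by fibres gives \eqref{eq:sym-formula}, with the integer coefficient interpreted in $\F$.

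For (2), note that $\mathcal{B}_\mu$ is non-empty (it contains $v_\mu$) and consists of distinct standard basis elements. Therefore $\sum_{w \in \mathcal{B}_\mu} w \neq 0$ in $T^k(\F^{Q_0})$, so $\Sym_k(v_\mu) = 0$ if and only if the scalar $\prod_v m_v!$ vanishes in $\F$.
\begin{itemize}
\item In characteristic $0$ this scalar is a nonzero integer, so it never vanishes.
\item In characteristic $p>0$, the integer $\prod_v m_v!$ is divisible by $p$ iff some $m_v! $ is, since $p$ is prime. This holds iff $m_v \geq p$ for some $v$, because $m!$ contains the factor $p$ exactly when $m \ge p$.
\end{itemize}

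For (3), I use that the sets $\mathcal{B}_\mu$ for distinct multisets $\mu$ (of any size) are pairwise disjoint subsets of $\mathcal{B}$. A word determines both its length and its underlying multiset. So the vectors $s_\mu := \sum_{w \in \mathcal{B}_\mu} w$ have pairwise disjoint nonempty supports in the standard basis, and hence are linearly independent. Take a relation $\sum_\mu c_\mu \Sym_{|\mu|}(v_\mu) = 0$ over the $\mu$ with nonzero symmetrization. By (1) it reads $\sum_\mu c_\mu (\prod_v m_v!)\, s_\mu = 0$. Thus each $c_\mu \prod_v m_v! = 0$, and since the factorial product is nonzero for these $\mu$, every $c_\mu = 0$.

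The only step needing care is the fibre count in (1), namely identifying the stabilizer with the Young subgroup. Everything else is bookkeeping in the standard basis.
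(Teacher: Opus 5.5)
Your proposal is correct and follows the paper's proof essentially step for step: the Young-subgroup fibre count for (1), the nonvanishing of the $0/1$ sum $\sum_{w\in\mathcal{B}_\mu} w$ reducing (2) to whether $p \mid \prod_v m_v!$, and the pairwise disjoint supports $\mathcal{B}_\mu$ for (3). Your fibre argument supplies detail the paper leaves implicit, but with the usual convention $(\sigma\tau)(i)=\sigma(\tau(i))$ it has a small slip. The condition $v_{\sigma(i)} = v_{\sigma'(i)}$ for all $i$ means $\sigma'\sigma^{-1}$ (not $\sigma^{-1}\sigma'$) lies in $\prod_v S_{m_v}$, so the fibres are the cosets $(\prod_v S_{m_v})\sigma$; their size $\prod_v m_v!$ is unaffected.
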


\begin{proof}
(1) For each $w \in \mathcal{B}_\mu$, the number of $\sigma \in S_k$ with $v_{\sigma(1)} \otimes \cdots \otimes v_{\sigma(k)} = w$ equals the order of the Young subgroup $\prod_v S_{m_v}$, which is $\prod_v m_v!$. The formula \eqref{eq:sym-formula} follows.

(2) Since $\sum_{w \in \mathcal{B}_\mu} w$ is a sum of distinct standard basis elements with coefficient $1$, it is non-zero in $T^k(\F^{Q_0})$. Hence $\Sym_k(v_\mu) = 0$ if and only if $\prod_v m_v! = 0$ in $\F$. If $\mathrm{char}(\F) = 0$, this never holds. If $\mathrm{char}(\F) = p > 0$, then since $p$ is prime, $p \mid \prod_v m_v!$ if and only if $p \mid m_v!$ for some $v$, which holds if and only if $m_v \geq p$.

(3) Let $\mu \neq \mu'$ be distinct multisets, and put $k := |\mu|$, $k' := |\mu'|$. The sets $\mathcal{B}_\mu$ and $\mathcal{B}_{\mu'}$ consist of basis monomials whose underlying multisets are $\mu$ and $\mu'$ respectively, hence are disjoint. By (1), $\Sym_k(v_\mu)$ and $\Sym_{k'}(v_{\mu'})$ have disjoint supports in the standard basis $\mathcal{B}$, and under the hypothesis (2) ensures both are non-zero. Non-zero elements with pairwise disjoint supports in the standard basis are $\F$-linearly independent.
Therefore, the assertion follows.
\end{proof}

\begin{lemma}\label{lem:Vmacro-nonzero-indep}
For any field $\F$ and any directed tensor-labeled hypergraph $\Hbb$ following a single standard construction, $\Vmacro \setminus \{0\}$ is $\F$-linearly independent in $T(\F^{Q_0})$.
\end{lemma}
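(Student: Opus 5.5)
The plan is to show that, in each of the six standard constructions, every element of $\Vmacro$ is either $0$, the unit $1$, or a nonzero element of a single explicit family of tensors with pairwise disjoint supports in the standard basis $\mathcal{B}$. Nonzero elements with pairwise disjoint supports in $\mathcal{B}$ are $\F$-linearly independent, exactly as in the last step of Proposition~\ref{prop:sym-vanishing}~(3). So the whole argument reduces to a support computation. Since $\Vmacro$ is a \emph{set}, repeated labels such as parallel edges or equal multisets contribute a single element, which removes any worry about duplicates.

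First I treat the ordered Constructions (2), (4) and (6). Every source or target tensor is a single standard basis element: a vertex $v\in Q_0\subset\mathcal{B}$, a word $u_1\otimes\cdots\otimes u_k$, or the unit $1\in\mathcal{B}$. Thus $\Vmacro\subseteq\mathcal{B}$, and a subset of a basis is linearly independent. None of these tensors is zero, so here $\Vmacro\setminus\{0\}=\Vmacro$.

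Next I treat the symmetric Constructions (1), (3) and (5). Every non-unit element of $\Vmacro$ has the form $\Sym_{|\mu|}(v_\mu)$ for a non-empty multiset $\mu$. In Construction (1) this holds because $2(v\otimes v)=\Sym_2(v\otimes v)$ and $u\otimes v+v\otimes u=\Sym_2(u\otimes v)$, as observed after Construction (3). In Constructions (1) and (3) the target tensor is the unit $1\in T^0$. In Construction (5) both the source and the target tensor are of this form with $p,q\geq 1$. By Proposition~\ref{prop:sym-vanishing}~(1), the support of $\Sym_{|\mu|}(v_\mu)$ lies in $\mathcal{B}_\mu\subset T^{|\mu|}$. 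These supports are disjoint for distinct $\mu$, and they are disjoint from the support $\{1\}$ of the unit, since every $\mu$ is non-empty and so has degree $\geq 1$. By Proposition~\ref{prop:sym-vanishing}~(3), the nonzero symmetrized tensors are linearly independent. Adjoining $1$, whose support is disjoint from all of theirs, preserves independence. The only elements discarded are the zero tensors arising in positive characteristic, as characterized in Proposition~\ref{prop:sym-vanishing}~(2).

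The main obstacle is bookkeeping rather than mathematics. In Construction (1), a loop is encoded by the literal label $2(v\otimes v)$, so I have to check that it equals $\Sym_2(v\otimes v)$ and therefore falls under Proposition~\ref{prop:sym-vanishing}. I also have to make sure that a single element of $\Vmacro$ can never be simultaneously a source label and a distinct target label with overlapping support. Grading settles this second point: the unit lies in degree $0$ and all other labels lie in degree $\geq1$. Within a fixed degree, distinct multisets or distinct words have disjoint supports.
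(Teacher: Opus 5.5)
Your proposal is correct and takes essentially the same route as the paper. In each construction you show that the nonzero elements of $\Vmacro$ are nonzero tensors with pairwise disjoint supports in $\mathcal{B}$, using Proposition~\ref{prop:sym-vanishing} for the symmetric cases and the degree separation of the unit $1$ for the undirected ones. The only cosmetic difference is that you absorb Construction (1) into Construction (3) via $2(v\otimes v)=\Sym_2(v\otimes v)$ and $u\otimes v+v\otimes u=\Sym_2(u\otimes v)$, whereas the paper checks the supports in Construction (1) directly.
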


\begin{proof}
For each of the standard constructions (1)--(6), we show that the elements of $\Vmacro \setminus \{0\}$ form a family of non-zero tensors with pairwise disjoint supports in the standard basis $\mathcal{B}$. The claim then follows from the fact that non-zero elements with pairwise disjoint supports in the standard basis are $\F$-linearly independent. Since $\Vmacro$ is a set, multiple edges yielding the same tensor (for example through parallel hyperedges) appear only once in $\Vmacro$. In what follows we show that distinct non-zero tensors in $\Vmacro$ have pairwise disjoint supports.

For Construction (2), $\Vmacro$ is a subset of $Q_0$, which is the standard basis of $T^1(\F^{Q_0})$. Distinct vertices are distinct standard basis elements, so each element of $\Vmacro \setminus \{0\}$ is non-zero with singleton support, and these supports are pairwise disjoint.

For Construction (1), the source tensor of a non-loop edge $\{u, v\}$ ($u \neq v$) is $u \otimes v + v \otimes u \in T^2(\F^{Q_0})$, and the source tensor of a loop edge $\{v\}$ is $2(v \otimes v) \in T^2(\F^{Q_0})$. In characteristic $2$, the source tensor of a loop edge equals $0$ and is therefore excluded from $\Vmacro \setminus \{0\}$, although it may still belong to $\Vmacro$. The target tensor of every hyperedge is $1 \in T^0(\F^{Q_0})$. For distinct vertex pairs $\{u, v\} \neq \{u', v'\}$, the corresponding non-loop source tensors are distinct elements of $\Vmacro$ with supports $\{u \otimes v, v \otimes u\}$ and $\{u' \otimes v', v' \otimes u'\}$ that are disjoint. For distinct vertices $v \neq v'$, the loop source tensors (non-zero when $\mathrm{char}(\F) \neq 2$) have disjoint singleton supports $\{v \otimes v\}$ and $\{v' \otimes v'\}$. Between non-loop and loop source tensors, $u \neq v$ implies $u \otimes v, v \otimes u \neq v' \otimes v'$, so the supports are disjoint. Finally, the support $\{1\}$ of the target tensor lies in $T^0(\F^{Q_0})$, which is in a different degree from the supports above, hence disjoint.

For Constructions (3) and (5), each $A_e$ (and each $B_e$ in Construction (5)) is of the form $\Sym_k(v_\mu)$ for some multiset $\mu$. By Proposition \ref{prop:sym-vanishing}~(1), the support of $\Sym_k(v_\mu)$ in the standard basis is $\mathcal{B}_\mu$, and for distinct multisets $\mu \neq \mu'$ we have $\mathcal{B}_\mu \cap \mathcal{B}_{\mu'} = \emptyset$. Since only elements of $\Vmacro \setminus \{0\}$ are considered, Proposition \ref{prop:sym-vanishing}~(2) restricts attention to non-zero $\Sym_k(v_\mu)$. Hence distinct symmetric tensors in $\Vmacro \setminus \{0\}$ correspond to distinct multisets and have disjoint supports. In Construction (3), the target tensor of every hyperedge is $1 \in T^0(\F^{Q_0})$, which lies in a different degree from the source supports and is therefore disjoint.

For Constructions (4) and (6), each $A_e$ (and each $B_e$ in Construction (6)) is a pure tensor $u_1 \otimes \cdots \otimes u_k$ associated with an ordered tuple $(u_1, \ldots, u_k)$. This is itself an element of the standard basis $\mathcal{B}$ and is non-zero by construction. Distinct pure tensors in $\Vmacro$ are distinct standard basis elements with disjoint singleton supports. In Construction (4), the target tensor of every hyperedge is $1 \in T^0(\F^{Q_0})$, which lies in a different degree from the source supports and is therefore disjoint.

This establishes, for each standard construction, that $\Vmacro \setminus \{0\}$ is a family of non-zero tensors with pairwise disjoint supports in the standard basis.
\end{proof}

\begin{theorem}\label{thm:defect-vanishing}
Let $\F$ be a field and $\Hbb$ a directed tensor-labeled hypergraph following a single standard construction. Then
\begin{equation}\label{eq:delta-vanishes}
\delta(\Hbb) = 0.
\end{equation}
In particular, $\dim_{\F} \Zsp(\Hbb) = |Q_1| - |\Vmacro| + \cmacro$.
\end{theorem}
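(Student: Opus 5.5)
The plan is to work directly with the defect space $\Zsp_{\mathrm{alg}}(\Hbb) = \Img(\Bmacro) \cap \Ker(\hphi)$ and show it is zero, using Lemma~\ref{lem:Vmacro-nonzero-indep} together with the component-sum characterization of $\Img(\Bmacro)$ in Lemma~\ref{lem:Bmacro-image}. Let $v = \sum_{w \in \Vmacro} v_w \mathbf{1}_w$ lie in $\Img(\Bmacro) \cap \Ker(\hphi)$. Then $\hphi(v) = \sum_w v_w w = 0$ in $T(\F^{Q_0})$. Since the zero tensor contributes nothing to this sum, the relation reads $\sum_{w \in \Vmacro \setminus \{0\}} v_w w = 0$. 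Lemma~\ref{lem:Vmacro-nonzero-indep} then forces $v_w = 0$ for every nonzero $w \in \Vmacro$.

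Two cases remain. If $0 \notin \Vmacro$, we already have $v = 0$. If $0 \in \Vmacro$, which can only happen in positive characteristic through a degenerate symmetric tensor in Constructions (1), (3) or (5), then $v = v_0 \mathbf{1}_0$. Let $C_0$ be the weakly connected component of $\Hmacro$ containing the vertex $0$. Lemma~\ref{lem:Bmacro-image} says the coefficient sum of $v$ over $C_0$ vanishes. All coefficients of $v$ other than $v_0$ are already zero, so this sum is just $v_0$, and hence $v_0 = 0$.

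In both cases $v = 0$, so $\delta(\Hbb) = 0$. The dimension formula $\dim_\F \Zsp(\Hbb) = |Q_1| - |\Vmacro| + \cmacro$ then follows immediately from Theorem~\ref{thm:tensor-dim}.

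The substantive content sits in Lemma~\ref{lem:Vmacro-nonzero-indep}, which is given. The only step needing care is the zero tensor, which lies in $\Ker(\hphi)$ by itself. It is excluded from $\Img(\Bmacro)$ because $\mathbf{1}_0$ has nonzero coefficient sum on its component. This is exactly why the degeneration lands in $\Zsp_{\mathrm{top}}$ rather than $\Zsp_{\mathrm{alg}}$. Note also that no connectivity hypothesis is needed, since the argument uses only the single component containing $0$.
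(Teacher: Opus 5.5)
Your proof is correct and follows the paper's argument essentially step for step. You evaluate under $\hphi$ and use Lemma~\ref{lem:Vmacro-nonzero-indep} to kill every coefficient at a nonzero boundary tensor. You then apply the component-sum criterion of Lemma~\ref{lem:Bmacro-image} on the component containing $0$ to force the remaining coefficient $v_0$ to vanish, and the dimension formula follows from Theorem~\ref{thm:tensor-dim}.
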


\begin{proof}
Take an arbitrary $\eta \in \Img(\Bmacro) \cap \Ker(\hphi) \subseteq \F^{\Vmacro}$, and show $\eta = 0$.

Expand $\eta$ in the standard basis:
$$\eta = \sum_{w \in \Vmacro} c_w \mathbf{1}_w \quad (c_w \in \F).$$
If $0 \in \Vmacro$, note that the corresponding basis vector $\mathbf{1}_0 \in \F^{\Vmacro}$ is distinct from the zero element $0 \in T(\F^{Q_0})$ in the codomain. To unify the notation, write $c_0$ for the coefficient of $\mathbf{1}_0$ when $0 \in \Vmacro$, and set $c_0 := 0$ when $0 \notin \Vmacro$.

By the definition of the evaluation map $\hphi$ and the assumption $\eta \in \Ker(\hphi)$,
$$0 = \hphi(\eta) = c_0 \cdot 0 + \sum_{w \in \Vmacro \setminus \{0\}} c_w \cdot w = \sum_{w \in \Vmacro \setminus \{0\}} c_w \cdot w.$$
By Lemma \ref{lem:Vmacro-nonzero-indep}, $\Vmacro \setminus \{0\}$ is $\F$-linearly independent in $T(\F^{Q_0})$, so $c_w = 0$ for every $w \in \Vmacro \setminus \{0\}$. Hence $\eta = c_0 \mathbf{1}_0$. If $0 \notin \Vmacro$, then $c_0 = 0$ and we already have $\eta = 0$.

It remains to show $c_0 = 0$ when $0 \in \Vmacro$. By Lemma \ref{lem:Bmacro-image}, any element of $\Img(\Bmacro)$ has zero coordinate sum on each weakly connected component of $\Hmacro$. Let $C_0 \subseteq \Vmacro$ denote the component containing the zero tensor $0$. The coordinate sum of $\eta = c_0 \mathbf{1}_0$ on $C_0$ is $c_0$. From $\eta \in \Img(\Bmacro)$ we obtain $c_0 = 0$, and so $\eta = 0$.

This proves $\delta(\Hbb) = 0$. The second assertion is immediate from Theorem \ref{thm:tensor-dim}.
\end{proof}

\begin{proposition}\label{prop:parallel-cycle}
Let $\Hbb$ be a directed tensor-labeled hypergraph with $k \geq 2$ distinct hyperedges $e_1, \ldots, e_k \in Q_1$ satisfying $\beta(e_1) = \cdots = \beta(e_k)$. For each $i = 2, \ldots, k$, set
$$\xi_i := \mathbf{1}_{e_1} - \mathbf{1}_{e_i} \in \F^{Q_1}.$$
Then $\xi_i \in \Zsp_{\mathrm{top}}(\Hbb) \subseteq \Zsp(\Hbb)$, and $\{\xi_2, \ldots, \xi_k\}$ is $\F$-linearly independent in $\F^{Q_1}$. In particular, $\Zsp_{\mathrm{top}}(\Hbb)$ contains the $(k-1)$-dimensional subspace $\Span_{\F}\{\xi_2, \ldots, \xi_k\}$.
\end{proposition}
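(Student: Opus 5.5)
The plan is to verify the two assertions directly from the definition of $\Bmacro$; no appeal to the defect machinery is needed. Write $(A, B) := \beta(e_1) = \cdots = \beta(e_k)$. In the associated macrograph $\Hmacro$ of Definition~\ref{def:macro}, all the edges $e_1, \ldots, e_k$ then have the same source vertex $A \in \Vmacro$ and the same target vertex $B \in \Vmacro$. They are parallel edges of $\Hmacro$, or parallel loops if $A = B$.

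First I show $\xi_i \in \Zsp_{\mathrm{top}}(\Hbb)$. By the definition of the incidence matrix,
$$\Bmacro(\mathbf{1}_{e_1}) = \mathbf{1}_B - \mathbf{1}_A = \Bmacro(\mathbf{1}_{e_i})$$
for every $i$. By linearity this gives $\Bmacro(\xi_i) = 0$, so $\xi_i \in \Ker(\Bmacro) = \Zsp_{\mathrm{top}}(\Hbb)$. The inclusion $\Zsp_{\mathrm{top}}(\Hbb) \subseteq \Zsp(\Hbb)$ is Proposition~\ref{prop:cycle-ses}. Alternatively, one can check directly that $\partial_\beta(\xi_i) = (B - A) - (B - A) = 0$.

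Next comes linear independence. Suppose $\sum_{i=2}^k \alpha_i \xi_i = 0$. The coefficient of $\mathbf{1}_{e_i}$ for $i \geq 2$ in this sum is $-\alpha_i$. This uses the fact that the $e_i$ are distinct hyperedges, so the vectors $\mathbf{1}_{e_i}$ are distinct standard basis vectors of $\F^{Q_1}$. Hence every $\alpha_i = 0$. The span therefore has dimension exactly $k-1$, and the final sentence of the statement follows.

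There is no real obstacle here. The only point needing care is that distinctness is required of the edges, not of their labels. The vectors $\mathbf{1}_{e_i}$ are independent regardless of the fact that $\beta$ identifies the edges, and the argument covers the degenerate cases $A = B$ and $A = 0$ without change.
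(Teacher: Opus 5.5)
Your proposal is correct and follows essentially the same route as the paper: both show $\Bmacro(\xi_i) = 0$ (and $\partial_\beta(\xi_i) = 0$) directly from $\beta(e_1) = \beta(e_i)$. Both then get linear independence by reading off the coefficient $-\alpha_j$ of $\mathbf{1}_{e_j}$ for $j \geq 2$.
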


\begin{proof}
For each $i \in \{2, \ldots, k\}$, the equality $\beta(e_1) = \beta(e_i)$ gives $A_{e_1} = A_{e_i}$ and $B_{e_1} = B_{e_i}$. Hence
\begin{align*}
&\partial_\beta(\xi_i) = (B_{e_1} - A_{e_1}) - (B_{e_i} - A_{e_i}) = 0, \\
& \Bmacro(\xi_i) = (\mathbf{1}_{B_{e_1}} - \mathbf{1}_{A_{e_1}}) - (\mathbf{1}_{B_{e_i}} - \mathbf{1}_{A_{e_i}}) = 0,
\end{align*}
so $\xi_i \in \Ker(\Bmacro) = \Zsp_{\mathrm{top}}(\Hbb) \subseteq \Ker(\partial_\beta) = \Zsp(\Hbb)$.

For linear independence, suppose $\sum_{i=2}^k a_i \xi_i = 0$ for some $a_i \in \F$. Expanding in the basis $\{\mathbf{1}_e \mid e \in Q_1\}$, the coefficient of $\mathbf{1}_{e_j}$ on the left side is $-a_j$ for each $j \in \{2, \ldots, k\}$. Hence $a_j = 0$ for $j = 2, \ldots, k$, and $\{\xi_2, \ldots, \xi_k\}$ is $\F$-linearly independent.
\end{proof}

\begin{remark}\label{rem:caveats}
Two caveats about Theorem \ref{thm:defect-vanishing}.

(1) The single-standard-construction hypothesis is essential. For instance, an ordered-tuple hyperedge $e_1 = (v, v)$ in Construction (4) yields $A_{e_1} = v \otimes v$, while a loop $e_2 = \{v\}$ in Construction (1) yields $A_{e_2} = 2(v \otimes v)$. If $\mathrm{char}(\F) \neq 2$, both are non-zero and $\Vmacro$ contains the linearly dependent pair $\{v \otimes v, 2(v \otimes v)\}$. The independence of Lemma \ref{lem:Vmacro-nonzero-indep} fails, and $\delta(\Hbb) > 0$ may occur under a suitable macrograph structure.

(2) In Construction (1) over $\mathrm{char}(\F) = 2$ with $k \geq 2$ parallel loops $\{e_0^{(1)}, \ldots, e_0^{(k)}\}$ at a single vertex $v$, each $\beta(e_0^{(i)}) = (0, 1)$. Proposition \ref{prop:parallel-cycle} gives a $(k-1)$-dimensional space of topological cycles, which arises from $\Zsp_{\mathrm{top}}(\Hbb)$ and does not contradict the theorem (which asserts $\delta = 0$ for the algebraic part). The effect of symmetric tensor vanishing appears in the topological, not the algebraic, cycle space.
\end{remark}

\begin{example}\label{ex:char2-loop}
Let $\F = \F_2$, and consider in Construction (1) an undirected graph with a loop $e_0$ at a vertex $v$. Since $\mathrm{char}(\F) = 2$, $\beta(e_0) = (2(v \otimes v), 1) = (0, 1)$, so $A_{e_0} = 0 \in T(\F^{Q_0})$ belongs to $\Vmacro$.
\begin{enumerate}
\item[(a)] Suppose $Q_1 = \{e_0\}$. Then $\Vmacro = \{0, 1\}$, and the macrograph is the single edge $0 \to 1$ with $\cmacro = 1$. Writing $\mathbf{1}_0, \mathbf{1}_1$ for the standard basis of $\F^{\Vmacro}$, we have $\Bmacro(\mathbf{1}_{e_0}) = \mathbf{1}_1 - \mathbf{1}_0$, so
$$\Img(\Bmacro) = \Span_{\F}\{\mathbf{1}_1 - \mathbf{1}_0\}.$$
Since $\hphi(\mathbf{1}_0) = 0$ and $\hphi(\mathbf{1}_1) = 1 \neq 0$,
$$\Ker(\hphi) = \Span_{\F}\{\mathbf{1}_0\}.$$
A common element $\alpha(\mathbf{1}_1 - \mathbf{1}_0) = \beta \mathbf{1}_0$ forces $\alpha = 0$ and $\beta = 0$. Hence $\Img(\Bmacro) \cap \Ker(\hphi) = 0$ and $\delta(\Hbb) = 0$.

\item[(b)] Suppose there are $k \geq 2$ parallel loops $e_0^{(1)}, \ldots, e_0^{(k)}$ at the same vertex $v$. All satisfy $\beta(e_0^{(i)}) = (0, 1)$, so $\Vmacro = \{0, 1\}$ is unchanged from (a), and Theorem \ref{thm:defect-vanishing} gives $\delta(\Hbb) = 0$. On the other hand, Proposition \ref{prop:parallel-cycle} applied to $e_0^{(1)}, \ldots, e_0^{(k)}$ yields $\xi_i := \mathbf{1}_{e_0^{(1)}} - \mathbf{1}_{e_0^{(i)}}$ ($i = 2, \ldots, k$), forming $k - 1$ linearly independent cycles in $\Zsp_{\mathrm{top}}(\Hbb)$.
\end{enumerate}
\end{example}

\subsection{Non-trivial algebraic cycles beyond the standard constructions}\label{subsec:nontrivial-defect}

Theorem \ref{thm:defect-vanishing} guarantees $\delta(\Hbb) = 0$ as long as $\Hbb$ follows a single standard construction. In this subsection we conversely show that allowing $\F$-linear combinations of vertex vectors as source or target tensors, beyond the standard constructions (1)--(6), can produce $\delta(\mathcal{H}) > 0$. We first give a general sufficient condition.

\begin{proposition}\label{prop:star-defect}
Let $\Hbb = (Q_0, Q_1, \beta)$ be a directed tensor-labeled hypergraph, and suppose there exist $r \geq 1$, distinct elements $w_0, w_1, \ldots, w_r \in \Vmacro$, and distinct hyperedges $e_1, \ldots, e_r \in Q_1$ such that $\beta(e_i) = (w_0, w_i)$ for $i = 1, \ldots, r$. If there exists a non-trivial $(\alpha_1, \ldots, \alpha_r) \in \F^r \setminus \{0\}$ with
$$\sum_{i=1}^r \alpha_i (w_i - w_0) = 0$$
in $T(\F^{Q_0})$, then $\xi := \sum_{i=1}^r \alpha_i \mathbf{1}_{e_i} \in \F^{Q_1}$ satisfies
$$\partial_\beta(\xi) = 0, \quad \Bmacro(\xi) \in \left(\Img(\Bmacro) \cap \Ker(\hphi)\right) \setminus \{0\}.$$
In particular, $\delta(\Hbb) \geq 1$.
\end{proposition}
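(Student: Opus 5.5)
The plan is a direct computation of both quantities from the definitions, followed by an appeal to Definition~\ref{def:tensor-defect}.

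\emph{Vanishing of $\partial_\beta(\xi)$.} By linearity,
$$\partial_\beta(\xi)=\sum_{i=1}^r \alpha_i\,\partial_\beta(\mathbf{1}_{e_i})=\sum_{i=1}^r \alpha_i (B_{e_i}-A_{e_i}).$$
Since $\beta(e_i)=(w_0,w_i)$, we have $A_{e_i}=w_0$ and $B_{e_i}=w_i$. The sum is therefore $\sum_i \alpha_i(w_i-w_0)$, which is $0$ by hypothesis.

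\emph{The image $\Bmacro(\xi)$.} Again by linearity,
$$\Bmacro(\xi)=\sum_{i=1}^r\alpha_i\bigl(\mathbf{1}_{w_i}-\mathbf{1}_{w_0}\bigr)=\sum_{i=1}^r \alpha_i\mathbf{1}_{w_i}-\Bigl(\sum_{i=1}^r\alpha_i\Bigr)\mathbf{1}_{w_0}\in\F^{\Vmacro}.$$
This element lies in $\Img(\Bmacro)$ by construction. By Proposition~\ref{prop:tensor-factor}, $\hphi(\Bmacro(\xi))=\partial_\beta(\xi)=0$, so it also lies in $\Ker(\hphi)$.

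\emph{Nonvanishing.} This is the only step needing care. The elements $w_0,w_1,\dots,w_r$ are distinct elements of $\Vmacro$. Hence the vectors $\mathbf{1}_{w_0},\mathbf{1}_{w_1},\dots,\mathbf{1}_{w_r}$ are distinct standard basis vectors of $\F^{\Vmacro}$. In particular, none of the $\mathbf{1}_{w_i}$ with $i\ge1$ coincides with $\mathbf{1}_{w_0}$, so reading off coordinates, the coefficient of $\mathbf{1}_{w_i}$ in $\Bmacro(\xi)$ is exactly $\alpha_i$ for $i\ge 1$. Because $(\alpha_1,\dots,\alpha_r)\ne0$, some coordinate is nonzero, and so $\Bmacro(\xi)\neq0$.

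Note that the distinctness is needed in $\F^{\Vmacro}$, not in $T(\F^{Q_0})$: the $w_i$ may well be linearly dependent as tensors, which is exactly the situation being exhibited. Distinctness of the edges $e_i$ is not actually needed for the argument, beyond making $\xi$ well defined as written.

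\emph{Conclusion.} We have produced a nonzero element of $\Img(\Bmacro)\cap\Ker(\hphi)$. Its dimension is therefore at least $1$, which gives $\delta(\Hbb)\ge1$ by Definition~\ref{def:tensor-defect}.
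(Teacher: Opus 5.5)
Your proposal is correct and matches the paper's proof essentially step for step. Both compute $\partial_\beta(\xi)=\sum_i\alpha_i(w_i-w_0)=0$, expand $\Bmacro(\xi)$, use the distinctness of $w_0,\dots,w_r$ in $\F^{\Vmacro}$ to find a nonzero coefficient $\alpha_i$, and invoke Proposition~\ref{prop:tensor-factor} to place $\Bmacro(\xi)$ in $\Ker(\hphi)$. Your aside about the edges is harmless: their distinctness is in fact automatic, since $e_i=e_j$ would force $w_i=w_j$.
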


\begin{proof}
By $\beta(e_i) = (w_0, w_i)$,
$$\partial_\beta(\xi) = \sum_{i=1}^r \alpha_i (w_i - w_0) = 0,$$
so $\xi \in \Zsp(\Hbb)$.

Next, we show $\Bmacro(\xi) \neq 0$. By the definition of $\Bmacro$, we obtain
$$\Bmacro(\xi) = \sum_{i=1}^r \alpha_i (\mathbf{1}_{w_i} - \mathbf{1}_{w_0}) = \sum_{i=1}^r \alpha_i \mathbf{1}_{w_i} - \left(\sum_{i=1}^r \alpha_i\right) \mathbf{1}_{w_0}.$$
Since $w_0, w_1, \ldots, w_r$ are distinct elements of $\Vmacro$, $\{\mathbf{1}_{w_0}, \mathbf{1}_{w_1}, \ldots, \mathbf{1}_{w_r}\}$ is linearly independent in $\F^{\Vmacro}$. As $(\alpha_1, \ldots, \alpha_r)$ is non-trivial, some $\alpha_i \neq 0$, and the coefficient of $\mathbf{1}_{w_i}$ in $\Bmacro(\xi)$ is non-zero. Hence $\Bmacro(\xi) \neq 0$.

By Proposition \ref{prop:tensor-factor} and $\partial_\beta(\xi) = 0$, we have $\Bmacro(\xi) \in \Ker(\hphi)$. Combining these, $\delta(\Hbb) = \dim_{\F}(\Img(\Bmacro) \cap \Ker(\hphi)) \geq 1$.
\end{proof}

\begin{example}\label{ex:alg-cycle}
Assume $\mathrm{char}(\F) \neq 2$. Consider the directed tensor-labeled hypergraph $\Hbb = (Q_0, Q_1, \beta)$ with $Q_0 = \{a, b\}$, $Q_1 = \{e_1, e_2\}$, and
$$\beta(e_1) = (a, a + b), \quad \beta(e_2) = (a, a - b).$$
Then $\Vmacro = \{a, a + b, a - b\}$, and the associated macrograph $\Hmacro$ is a star with $a$ as the centre and two edges, so $\cmacro = 1$. Since $W_{\Hbb} = \Span\{a, a + b, a - b\}$ has dimension $2$, $\dim_{\F} \Ker(\hphi) = 3 - 2 = 1$.

We have
$$\Bmacro(\mathbf{1}_{e_1} + \mathbf{1}_{e_2}) = \mathbf{1}_{a+b} + \mathbf{1}_{a-b} - 2\mathbf{1}_a \in \Img(\Bmacro) \cap \Ker(\hphi),$$
so $\delta(\Hbb) \geq 1$. On the other hand, $\Img(\Bmacro) \cap \Ker(\hphi) \subseteq \Ker(\hphi)$ and $\dim_{\F} \Ker(\hphi) = 1$, so $\delta(\Hbb) \leq 1$. Hence $\delta(\Hbb) = 1$.

This is the case $r = 2$, $w_0 = a$, $w_1 = a + b$, $w_2 = a - b$, $\alpha_1 = \alpha_2 = 1$ of Proposition \ref{prop:star-defect}.
\end{example}

The pair $(|Q_1|, |\Vmacro|) = (2, 3)$ in Example \ref{ex:alg-cycle} is minimal in both the number of edges and the number of macrograph vertices needed for $\delta(\Hbb) > 0$, as the following proposition shows.

\begin{proposition}\label{prop:minimality}
Any directed tensor-labeled hypergraph $\Hbb$ with $\delta(\Hbb) \geq 1$ satisfies $|Q_1| \geq 2$ and $|\Vmacro| \geq 3$.
\end{proposition}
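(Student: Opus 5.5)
We argue by contraposition: if $|Q_1| \leq 1$ or $|\Vmacro| \leq 2$, then $\delta(\Hbb) = 0$. Two facts do the work. First, $\delta(\Hbb) \leq \rank_\F(\Bmacro) = |\Vmacro| - \cmacro$, by Corollary~\ref{cor:tensor-defect-rank-drop} or simply because $\Zsp_{\mathrm{alg}}(\Hbb) \subseteq \Img(\Bmacro)$. Second, $\rank_\F(\Bmacro) \leq |Q_1|$. So it suffices to analyse the cases where $\Img(\Bmacro)$ is at most one-dimensional.

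\emph{Case $|Q_1| \leq 1$.} If $Q_1 = \emptyset$, then $\Vmacro = \emptyset$ and $\delta = 0$ trivially. Otherwise $Q_1 = \{e\}$, and $\Img(\Bmacro) = \Span_\F\{\mathbf{1}_{B_e} - \mathbf{1}_{A_e}\}$.
\begin{itemize}
\item If $A_e = B_e$, this span is zero and $\delta = 0$.
\item If $A_e \neq B_e$, a nonzero element of $\Img(\Bmacro) \cap \Ker(\hphi)$ would be a nonzero multiple of $\mathbf{1}_{B_e} - \mathbf{1}_{A_e}$ killed by $\hphi$. That would force $B_e - A_e = 0$ in $T(\F^{Q_0})$, contradicting $A_e \neq B_e$.
\end{itemize}
Hence $\delta(\Hbb) = 0$.

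\emph{Case $|\Vmacro| \leq 2$.} The bound $\delta \leq |\Vmacro| - \cmacro$ handles most subcases.
\begin{itemize}
\item If $|\Vmacro| \leq 1$, then $\delta \leq 0$ since $\cmacro \geq |\Vmacro|$ there.
\item If $|\Vmacro| = 2$ and $\cmacro = 2$, then $\delta \leq 0$ as well.
\item The remaining subcase is $\Vmacro = \{w_1, w_2\}$ with $w_1 \neq w_2$ and $\Hmacro$ connected. By Lemma~\ref{lem:Bmacro-image}, $\Img(\Bmacro) = \Span_\F\{\mathbf{1}_{w_2} - \mathbf{1}_{w_1}\}$. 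As in the first case, $\hphi(\mathbf{1}_{w_2} - \mathbf{1}_{w_1}) = w_2 - w_1 \neq 0$, because $w_1, w_2$ are distinct elements of $T(\F^{Q_0})$. So the intersection is zero.
\end{itemize}
Equivalently, this subcase follows from Corollary~\ref{cor:tensor-affine-defect}: two distinct points are always affinely independent.

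There is no real obstacle. The only subtle point is to distinguish the basis vector $\mathbf{1}_w \in \F^{\Vmacro}$ from the tensor $w$, and to use that distinct elements of $\Vmacro$ are distinct tensors, so their difference is nonzero. This is exactly what rules out a defect whenever $\Img(\Bmacro)$ is spanned by a single edge difference.
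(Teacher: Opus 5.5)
Your proof is correct and follows essentially the same route as the paper: contraposition, the same case split on $|Q_1|\le 1$ versus $|\Vmacro|\le 2$, and the key observation that the single edge difference $\mathbf{1}_{w_2}-\mathbf{1}_{w_1}$ between distinct boundary tensors is not killed by $\hphi$. The only cosmetic difference is that you dispatch the degenerate subcases with the bound $\delta(\Hbb)\le |\Vmacro|-\cmacro$ and argue directly on the one-dimensional $\Img(\Bmacro)$, whereas the paper also bounds $\dim_\F\Ker(\hphi)\le 1$ before comparing the two subspaces; your argument shows that step is unnecessary.
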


\begin{proof}
By contraposition, we show $\delta(\Hbb) = 0$ whenever $|Q_1| \leq 1$ or $|\Vmacro| \leq 2$.

Case $|Q_1| \leq 1$: If $|Q_1| = 0$, then $\Vmacro = \emptyset$, $\Bmacro = 0$, and $\delta(\Hbb) = 0$ trivially. If $|Q_1| = 1$ with unique edge $e$, then $\Bmacro(\mathbf{1}_e) = \mathbf{1}_{B_e} - \mathbf{1}_{A_e}$. If $A_e = B_e$, then $\Img(\Bmacro) = 0$ and $\delta(\Hbb) = 0$. If $A_e \neq B_e$, then $\Img(\Bmacro) = \Span_{\F}\{\mathbf{1}_{B_e} - \mathbf{1}_{A_e}\}$ is one-dimensional, and any non-zero element $c(\mathbf{1}_{B_e} - \mathbf{1}_{A_e})$ ($c \in \F \setminus \{0\}$) is mapped by $\hphi$ to $c(B_e - A_e) \neq 0$, hence does not lie in $\Ker(\hphi)$. Thus $\Img(\Bmacro) \cap \Ker(\hphi) = 0$ and $\delta(\Hbb) = 0$.

Case $|Q_1| \geq 2$ and $|\Vmacro| \leq 2$: If $|\Vmacro| \leq 1$, then every hyperedge is a loop and $\Bmacro = 0$, so $\delta(\Hbb) = 0$. If $|\Vmacro| = 2$, write $\Vmacro = \{w_0, w_1\}$ with $w_0 \neq w_1$. For each $e \in Q_1$, $\Bmacro(\mathbf{1}_e) \in \{0, \pm(\mathbf{1}_{w_1} - \mathbf{1}_{w_0})\}$, so $\Img(\Bmacro) \subseteq \Span_{\F}\{\mathbf{1}_{w_1} - \mathbf{1}_{w_0}\}$ and $\dim_{\F} \Img(\Bmacro) \leq 1$. On the other hand, $\dim_{\F} \F^{\Vmacro} = 2$ and $\hphi(\F^{\Vmacro}) = \Span_{\F}\{w_0, w_1\}$. Since $w_0 \neq w_1$, at least one of $w_0, w_1$ is non-zero, so $\dim_{\F} \hphi(\F^{\Vmacro}) \geq 1$ and $\dim_{\F} \Ker(\hphi) \leq 1$.

For two subspaces of $\F^{\Vmacro}$ each of dimension at most $1$ to have non-zero intersection, both must be one-dimensional and equal. In that case $\mathbf{1}_{w_1} - \mathbf{1}_{w_0} \in \Ker(\hphi)$, that is, $w_1 - w_0 = 0$, contradicting $w_0 \neq w_1$. Hence $\Img(\Bmacro) \cap \Ker(\hphi) = 0$ and $\delta(\Hbb) = 0$.
\end{proof}

\section{Observation maps and projected cycle spaces}\label{sec:observation}

We construct a hierarchy of cycle spaces of varying granularity by composing $\partial_\beta$ with an \emph{observation map}---an $\F$-linear map sending higher-degree tensor components to lower-degree ones. With an appropriate choice, the classical cycle space is recovered from the cycle space of a higher-tensor encoding (Theorem \ref{thm:F2-recovery}).

By the \textbf{classical cycle space}, in the directed-graph case we mean the cycle space $\Zsp(D) = \Ker(B_D)$ of Section \ref{sec:general}, and in the undirected-graph case (Construction (1)) we mean the kernel $\Ker(B^{\mathrm{cl}}) \subseteq \F^{Q_1}$ of the classical undirected incidence matrix $B^{\mathrm{cl}} \in \{0, 1\}^{Q_0 \times Q_1}$ (defined by $B^{\mathrm{cl}}_{x, e} = 1$ if $x \in \psi(e)$ and $e$ is a non-loop edge, and $0$ otherwise), read over the appropriate field $\F$ \cite{Biggs, GodsilRoyle}.

\subsection{Definition of observation maps and projected cycle spaces}

\begin{definition}
Let $\Hbb$ be a directed tensor-labeled hypergraph and $\rho : T(\F^{Q_0}) \to U'$ an $\F$-linear map for some $\F$-vector space $U'$. We call $\rho$ an \textbf{observation map}, and define the corresponding \textbf{projected cycle space} by
$$\Zsp_{\rho}(\Hbb) := \Ker(\rho \circ \partial_{\beta}) \subseteq \F^{Q_1}.$$
\end{definition}

\begin{definition}\label{def:obs-defect}
For $\Hbb$ and an observation map $\rho$, the \textbf{defect invariant with respect to $\rho$} is
$$\delta_{\rho}(\Hbb) := \dim_{\F}\left(\Img(\Bmacro) \cap \Ker(\rho \circ \hphi)\right).$$
\end{definition}

\begin{theorem}\label{thm:obs-dim}
For $\Hbb$ and any observation map $\rho$,
\begin{equation}\label{eq:obs-dim}
\dim_{\F} \Zsp_{\rho}(\Hbb) = |Q_1| - |\Vmacro| + \cmacro + \delta_{\rho}(\Hbb).
\end{equation}
\end{theorem}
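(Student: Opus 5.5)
The plan is to reduce Theorem~\ref{thm:obs-dim} to Theorem~\ref{thm:rank-nullity} by replacing the label space and the labeling. Take $D = \Hmacro$, so $X = \Vmacro$ and $E = Q_1$, and take $U = U'$. Use the composite labeling $\phi_\rho : \Vmacro \to U'$, $\phi_\rho(w) := \rho(w)$. Its $\F$-linear extension is exactly $\rho \circ \hphi : \F^{\Vmacro} \to U'$, since both maps are linear and agree on the basis vectors $\mathbf{1}_w$.

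The first step is the factorization
\[ \rho \circ \partial_\beta = \rho \circ \hphi \circ \Bmacro = \widehat{\phi_\rho} \circ \Bmacro, \]
which follows from Proposition~\ref{prop:tensor-factor} by composing with $\rho$. Equivalently, evaluate on $\mathbf{1}_e$: both sides equal $\rho(B_e) - \rho(A_e) = \phi_\rho(t(e)) - \phi_\rho(s(e))$. So $\rho \circ \partial_\beta$ is precisely the vector-valued incidence map $\partial_{\phi_\rho}$ of Definition~\ref{def:edge-diff} for the macrograph. In particular $\Zsp_\rho(\Hbb) = \Ker(\partial_{\phi_\rho})$.

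The second step is to apply Theorem~\ref{thm:rank-nullity}. This gives
\[ \dim_\F \Zsp_\rho(\Hbb) = |Q_1| - |\Vmacro| + \cmacro + \dim_\F\bigl(\Img(\Bmacro) \cap \Ker(\rho\circ\hphi)\bigr), \]
and the last term is $\delta_\rho(\Hbb)$ by Definition~\ref{def:obs-defect}.

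There is no real obstacle. The only point to check is that the macrograph is built from the unobserved tensors $A_e, B_e$ and not from $\rho(A_e), \rho(B_e)$. Because of this, $|\Vmacro|$ and $\cmacro$ stay the same, and every collision created by $\rho$ (for example $\rho(w) = \rho(w')$ with $w \neq w'$) is absorbed into $\Ker(\rho \circ \hphi)$ and hence into the defect term. Theorem~\ref{thm:rank-nullity} allows an arbitrary map $\phi : X \to U$ with no injectivity requirement, so it applies verbatim.
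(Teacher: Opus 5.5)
Your proposal is correct and follows essentially the same route as the paper: it factors $\rho\circ\partial_\beta = (\rho\circ\hphi)\circ\Bmacro$ and applies Theorem~\ref{thm:rank-nullity} on the macrograph to the labeling $w\mapsto\rho(w)$, whose linear extension is $\rho\circ\hphi$. Your added observation that $\Vmacro$ and $\cmacro$ are built from the unobserved tensors, so that every collision caused by $\rho$ is absorbed into the defect term, is accurate.
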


\begin{proof}
We have $\rho \circ \partial_{\beta} = (\rho \circ \hphi) \circ \Bmacro$, so applying Theorem \ref{thm:rank-nullity} to the labeling $w \mapsto \rho(w)$ (whose linear extension is $\rho \circ \hphi$) gives the claim.
\end{proof}

\begin{corollary}
If $\Ker(\rho_1 \circ \hphi) \subseteq \Ker(\rho_2 \circ \hphi)$, then $\Zsp_{\rho_1}(\Hbb) \subseteq \Zsp_{\rho_2}(\Hbb)$. In particular, for any observation map $\rho$,
$$\Zsp(\Hbb) \subseteq \Zsp_{\rho}(\Hbb) \subseteq \F^{Q_1}.$$
\end{corollary}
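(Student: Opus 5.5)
The corollary follows from the factorization $\rho \circ \partial_\beta = (\rho \circ \hphi) \circ \Bmacro$ used in the proof of Theorem~\ref{thm:obs-dim}. We rewrite each projected cycle space as the preimage under $\Bmacro$ of a kernel in $\F^{\Vmacro}$:
$$\Zsp_{\rho_i}(\Hbb) = \Ker\bigl((\rho_i \circ \hphi) \circ \Bmacro\bigr) = \Bmacro^{-1}\bigl(\Ker(\rho_i \circ \hphi)\bigr), \quad i = 1,2.$$

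Preimages preserve inclusions, so the hypothesis $\Ker(\rho_1 \circ \hphi) \subseteq \Ker(\rho_2 \circ \hphi)$ gives $\Zsp_{\rho_1}(\Hbb) \subseteq \Zsp_{\rho_2}(\Hbb)$. At the level of elements: if $\xi \in \Zsp_{\rho_1}(\Hbb)$, then $\Bmacro(\xi) \in \Ker(\rho_1 \circ \hphi) \subseteq \Ker(\rho_2 \circ \hphi)$, hence $\rho_2(\partial_\beta(\xi)) = 0$.

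For the particular case we take $\rho_1 = \id_{T(\F^{Q_0})}$. Then $\Zsp_{\rho_1}(\Hbb) = \Ker(\partial_\beta) = \Zsp(\Hbb)$. Moreover $\Ker(\hphi) \subseteq \Ker(\rho \circ \hphi)$ holds trivially for any linear $\rho$. Applying the first part with $\rho_2 = \rho$ yields $\Zsp(\Hbb) \subseteq \Zsp_\rho(\Hbb)$, and the inclusion $\Zsp_\rho(\Hbb) \subseteq \F^{Q_1}$ holds by definition.

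There is no real obstacle here. The only point needing care is to route the argument through $\Bmacro$, since the hypothesis is stated in $\F^{\Vmacro}$ rather than in $T(\F^{Q_0})$.
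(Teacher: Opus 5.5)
Your proof is correct and is exactly the intended argument. The paper states this corollary without a separate proof, treating it as immediate from the factorization $\rho\circ\partial_\beta = (\rho\circ\hphi)\circ\Bmacro$ used in Theorem~\ref{thm:obs-dim}; your special case $\rho_1=\id$ recovers the inclusion $\Ker(\partial_\beta)\subseteq\Ker(\rho\circ\partial_\beta)$, which the paper also uses directly in Proposition~\ref{prop:observation-quotient}.
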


\begin{proposition}\label{prop:observation-quotient}
For any observation map $\rho : T(\F^{Q_0}) \to U'$, the restriction of $\partial_\beta$ yields the short exact sequence
$$0 \longrightarrow \Zsp(\Hbb) \longrightarrow \Zsp_\rho(\Hbb) \xrightarrow{\partial_\beta} \Img(\partial_\beta) \cap \Ker(\rho) \longrightarrow 0.$$
In particular, $\Zsp_\rho(\Hbb) / \Zsp(\Hbb) \cong \Img(\partial_\beta) \cap \Ker(\rho)$, and
$$\delta_\rho(\Hbb) - \delta(\Hbb) = \dim_\F \Zsp_\rho(\Hbb) - \dim_\F \Zsp(\Hbb) = \dim_\F\left(\Img(\partial_\beta) \cap \Ker(\rho)\right).$$
\end{proposition}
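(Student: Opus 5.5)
The argument has three steps. First I identify the map, then I check exactness at each spot, and finally I count dimensions.

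\textbf{The map.} I restrict $\partial_\beta$ to $\Zsp_\rho(\Hbb)$. If $\xi \in \Zsp_\rho(\Hbb)$, then $\rho(\partial_\beta(\xi)) = 0$, so $\partial_\beta(\xi)$ lies in $\Img(\partial_\beta) \cap \Ker(\rho)$. The restriction is therefore a well-defined $\F$-linear map
\[
\Zsp_\rho(\Hbb) \to \Img(\partial_\beta) \cap \Ker(\rho).
\]

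\textbf{Exactness.}
\begin{itemize}
\item \emph{Kernel.} An element of $\Zsp_\rho(\Hbb)$ is killed by this map exactly when it lies in $\Ker(\partial_\beta) = \Zsp(\Hbb)$. The inclusion $\Zsp(\Hbb) \subseteq \Zsp_\rho(\Hbb)$ is the preceding corollary, so the kernel of the restricted map is $\Zsp(\Hbb)$. This gives exactness at the left and in the middle.
\item \emph{Surjectivity.} Take $y \in \Img(\partial_\beta) \cap \Ker(\rho)$ and write $y = \partial_\beta(\xi)$ for some $\xi \in \F^{Q_1}$. Then $\rho\partial_\beta(\xi) = \rho(y) = 0$, so $\xi \in \Zsp_\rho(\Hbb)$. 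This is the same argument as the surjectivity step in the proof of Theorem~\ref{thm:rank-nullity}, with $B_D$ replaced by $\partial_\beta$ and $\hphi$ replaced by $\rho$.
\end{itemize}
The first isomorphism theorem then gives
\[
\Zsp_\rho(\Hbb)/\Zsp(\Hbb) \cong \Img(\partial_\beta) \cap \Ker(\rho).
\]

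\textbf{Dimension count.} Everything is finite-dimensional, since $\F^{Q_1}$ is. So the middle quantity $\dim \Zsp_\rho(\Hbb) - \dim \Zsp(\Hbb)$ equals $\dim(\Img(\partial_\beta)\cap\Ker(\rho))$. To relate this to the defects, I subtract the dimension formula of Theorem~\ref{thm:tensor-dim} from that of Theorem~\ref{thm:obs-dim}. The common term $|Q_1| - |\Vmacro| + \cmacro$ cancels, leaving
\[
\delta_\rho(\Hbb) - \delta(\Hbb) = \dim_\F \Zsp_\rho(\Hbb) - \dim_\F \Zsp(\Hbb).
\]

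None of these steps is a real obstacle. The only point needing care is the surjectivity, where one must confirm that a preimage of an element of $\Ker(\rho)$ automatically lies in $\Zsp_\rho(\Hbb)$, and that holds by definition.
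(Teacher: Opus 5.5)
Your proposal is correct and follows the paper's proof essentially step for step. You restrict $\partial_\beta$ to $\Zsp_\rho(\Hbb)$, identify its kernel as $\Zsp(\Hbb)$, obtain surjectivity from the fact that any preimage of an element of $\Ker(\rho)$ lies in $\Zsp_\rho(\Hbb)$, and then subtract the dimension formulas of Theorems~\ref{thm:obs-dim} and~\ref{thm:tensor-dim}, exactly as the paper does.
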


\begin{proof}
We have $\Zsp(\Hbb) = \Ker(\partial_\beta) \subseteq \Ker(\rho \circ \partial_\beta) = \Zsp_\rho(\Hbb)$. The restriction of $\partial_\beta$ to $\Zsp_\rho(\Hbb)$ sends $\xi \in \Zsp_\rho(\Hbb)$ to an element of $\Img(\partial_\beta) \cap \Ker(\rho)$, since $\rho(\partial_\beta(\xi)) = 0$. Its kernel is $\Zsp_\rho(\Hbb) \cap \Ker(\partial_\beta) = \Zsp(\Hbb)$. For surjectivity, any $w = \partial_\beta(\eta) \in \Img(\partial_\beta) \cap \Ker(\rho)$ comes from $\eta \in \Zsp_\rho(\Hbb)$, since $\rho(\partial_\beta(\eta)) = \rho(w) = 0$. This establishes the short exact sequence, and the isomorphism of quotients follows from the first isomorphism theorem. The final equality follows from Theorem \ref{thm:obs-dim} and Theorem \ref{thm:tensor-dim}.
\end{proof}

\subsection{Recovery of the classical cycle space via observation maps}\label{subsec:Z-vs-classical}

When $\Hbb$ follows Construction (2) (an ordinary directed graph), each $A_e, B_e \in Q_0 \subset T^1(\F^{Q_0})$, so $\partial_{\beta}(\mathbf{1}_e) = B_e - A_e \in T^1(\F^{Q_0})$ and $\Img(\partial_{\beta}) \subseteq T^1(\F^{Q_0})$ holds trivially. Hence for the projection $\pi_1 : T(\F^{Q_0}) \to T^1(\F^{Q_0}) = \F^{Q_0}$ onto degree $1$, $\pi_1 \circ \partial_{\beta} = \partial_{\beta}$ and $\Zsp_{\pi_1}(\Hbb) = \Zsp(\Hbb)$. Thus the classical directed-graph cycle space is recovered for Construction (2).

On the other hand, Construction (1) encodes an undirected edge $\{u, v\}$ as the symmetric $2$-tensor $u \otimes v + v \otimes u \in T^2(\F^{Q_0})$, so $\partial_{\beta}$ takes values in both $T^0(\F^{Q_0})$ and $T^2(\F^{Q_0})$, and $\Zsp(\Hbb)$ does not in general coincide with the classical undirected $\F_2$-coefficient cycle space or its oriented analogue.

\begin{example}\label{ex:triangle}
Let $Q_0 = \{a, b, c\}$, $Q_1 = \{e_{ab}, e_{bc}, e_{ca}\}$, and consider the triangle graph $K_3$ as a directed tensor-labeled hypergraph $\Hbb$ via Construction (1):
$$\beta(e_{ab}) = (a \otimes b + b \otimes a,\, 1), \quad \beta(e_{bc}) = (b \otimes c + c \otimes b,\, 1), \quad \beta(e_{ca}) = (c \otimes a + a \otimes c,\, 1).$$
The classical undirected cycle space $\Zsp_{\mathrm{cl}}(K_3)$ is one-dimensional over $\F_2$, generated by $\mathbf{1}_{e_{ab}} + \mathbf{1}_{e_{bc}} + \mathbf{1}_{e_{ca}}$. We denote this generator, read as an element of $\F^{Q_1}$, by $\xi$.

Applying the tensor incidence operator $\partial_\beta$ to $\xi$,
$$\partial_\beta(\xi) = 3 \cdot 1 - \left((a \otimes b + b \otimes a) + (b \otimes c + c \otimes b) + (c \otimes a + a \otimes c)\right).$$
The $T^0(\F^{Q_0})$ component is $3 \cdot 1$, and the $T^2(\F^{Q_0})$ component consists of the six standard basis words $a \otimes b$, $b \otimes a$, $b \otimes c$, $c \otimes b$, $c \otimes a$, $a \otimes c$, each with coefficient $-1$. Hence $\xi \notin \Zsp(\Hbb)$, so $\Zsp(\Hbb)$ does not coincide with the classical cycle space.

Moreover, $\partial_\beta : \F^{Q_1} \to T(\F^{Q_0})$ is injective, so $\Zsp(\Hbb) = 0$. Its dimension is $0$, which does not match the dimension $|Q_1| - |Q_0| + c(K_3) = 3 - 3 + 1 = 1$ of the classical undirected cycle space.

The result is also confirmed by Theorem \ref{thm:tensor-dim} and Theorem \ref{thm:defect-vanishing}. The set of boundary tensors is
$$\Vmacro = \{a \otimes b + b \otimes a,\; b \otimes c + c \otimes b,\; c \otimes a + a \otimes c,\; 1\},$$
of size $4$, and the associated macrograph $\Hmacro$ is a connected star with $1$ as the common target of the three edges, so $\cmacro = 1$. Following the single standard Construction (1), Theorem \ref{thm:defect-vanishing} gives $\delta(\Hbb) = 0$, and the dimension formula yields $\dim_{\F} \Zsp(\Hbb) = 3 - 4 + 1 + 0 = 0$.
\end{example}

\begin{theorem}\label{thm:F2-recovery}
Let $\F$ be a field of characteristic $2$, and let $\Hbb = (Q_0, Q_1, \beta)$ be an undirected graph encoded via Construction (1). Define the $\F$-linear map $\rho : T(\F^{Q_0}) \to \F^{Q_0}$ on the standard basis $\mathcal{B}$ \eqref{eq:standard-basis} by
\begin{equation}\label{eq:rho-recovery}
\rho(1) := 0, \quad \rho(u_1 \otimes u_2 \otimes \cdots \otimes u_k) := u_1 \quad (k \geq 1),
\end{equation}
and extend by $\F$-linearity. We call $\rho$ the \textbf{projection onto the first component}. Then $\Zsp_{\rho}(\Hbb)$ coincides with the kernel $\Ker(B^{\mathrm{cl}}) \subseteq \F^{Q_1}$ of the classical undirected incidence matrix $B^{\mathrm{cl}}$ over $\F$.
\end{theorem}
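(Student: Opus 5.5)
The plan is to show that the two linear maps $\rho \circ \partial_\beta$ and $B^{\mathrm{cl}}$ from $\F^{Q_1}$ to $\F^{Q_0}$ coincide; the equality $\Zsp_{\rho}(\Hbb) = \Ker(B^{\mathrm{cl}})$ then follows immediately. Both maps are $\F$-linear, so it suffices to compare them on the basis vectors $\mathbf{1}_e$, $e \in Q_1$. I will split into two cases according to whether $e$ is a non-loop edge or a loop.

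\textbf{Non-loop edge.} Let $\psi(e) = \{u, v\}$ with $u \neq v$, so $\beta(e) = (u \otimes v + v \otimes u, 1)$. Then
$$\partial_\beta(\mathbf{1}_e) = 1 - u \otimes v - v \otimes u.$$
Applying \eqref{eq:rho-recovery}, $\rho(1) = 0$, $\rho(u \otimes v) = u$ and $\rho(v \otimes u) = v$. Hence $\rho(\partial_\beta(\mathbf{1}_e)) = -(u + v) = u + v$, using $\mathrm{char}(\F) = 2$. Under the identification $u = \mathbf{1}_u$ of Remark \ref{rem:word-space}, this is $\mathbf{1}_u + \mathbf{1}_v$, which is exactly the column $B^{\mathrm{cl}}(\mathbf{1}_e)$.

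\textbf{Loop.} Let $\psi(e) = \{v\}$, so $A_e = 2(v \otimes v)$. Then $\rho(\partial_\beta(\mathbf{1}_e)) = \rho(1) - 2v = 0$. This vanishes in characteristic $2$, and also because $\rho(1) = 0$. By the definition of $B^{\mathrm{cl}}$, loop columns are zero, so the two maps agree here as well. Note that one need not even use $A_e = 0$ here; the coefficient $2$ alone kills the term.

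The only point needing care is the sign and characteristic bookkeeping: the minus signs from $B_e - A_e$ disappear only because $-1 = 1$, and this is precisely where the hypothesis on $\F$ enters. The other ingredient is the convention that loops contribute the zero column to $B^{\mathrm{cl}}$, which matches $2v = 0$. Beyond this, the argument is a direct verification, and I expect no genuine obstacle. One could additionally remark that Theorem \ref{thm:obs-dim} then computes $\dim \Zsp_\rho(\Hbb)$, but this is not needed for the statement.
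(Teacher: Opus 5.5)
Your proposal is correct and follows the paper's proof essentially verbatim: both verify $\rho\circ\partial_\beta = B^{\mathrm{cl}}$ on the basis $\{\mathbf{1}_e\}_{e\in Q_1}$ by splitting into non-loop edges, where $-(u+v)=u+v$ in characteristic $2$, and loops, where both columns vanish. The only cosmetic difference is that you compute $\rho(2(v\otimes v)) = 2v = 0$ instead of first simplifying $A_e = 0$, which is the same use of $\mathrm{char}(\F)=2$.
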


\begin{proof}
We compute $\rho(\partial_\beta(\mathbf{1}_e))$ for each edge of Construction (1).

For a non-loop edge $e = \{u, v\}$ ($u \neq v$), using $\mathrm{char}(\F) = 2$,
$$\rho(\partial_\beta(\mathbf{1}_e)) = 0 - (u + v) = -(u + v) = u + v.$$

For a loop edge $e = \{v\}$, $\beta(e) = (2(v \otimes v),\, 1) = (0, 1)$, so $\partial_\beta(\mathbf{1}_e) = 1$ and $\rho(\partial_\beta(\mathbf{1}_e)) = \rho(1) = 0$.

Hence, for any $\xi = \sum_{e \in Q_1} a_e \mathbf{1}_e \in \F^{Q_1}$,
\begin{equation}\label{eq:rho-circ-Delta}
(\rho \circ \partial_\beta)(\xi) = \sum_{\substack{e \in Q_1 \\ e = \{u_e, v_e\} \text{ non-loop}}} a_e (u_e + v_e) \in \F^{Q_0}.
\end{equation}
On the other hand, $B^{\mathrm{cl}}$ sends a non-loop edge $\{u, v\}$ to  $u + v$ and a loop edge to the zero vector. Its entries are $0$ or $1$, so $\rho \circ \partial_\beta$ and $B^{\mathrm{cl}}$ coincide on the basis $\{\mathbf{1}_e\}_{e \in Q_1}$, hence $\rho \circ \partial_\beta = B^{\mathrm{cl}}$ as $\F$-linear maps.
\end{proof}

\subsection{Filtration structure of observation maps}

Corresponding to the degree filtration of the tensor algebra $T(\F^{Q_0})$,
$$T^{\leq 0}(\F^{Q_0}) \subset T^{\leq 1}(\F^{Q_0}) \subset T^{\leq 2}(\F^{Q_0}) \subset \cdots, \quad T^{\leq k}(\F^{Q_0}) := \bigoplus_{j \leq k} T^j(\F^{Q_0}),$$
we define a degree filtration of observation maps.

\begin{definition}\label{def:rank-k-obs}
Let $\pi_{\leq k} : T(\F^{Q_0}) \to T^{\leq k}(\F^{Q_0})$ be the projection onto degrees $\leq k$. We call $\Zsp_{\leq k}(\Hbb) := \Zsp_{\pi_{\leq k}}(\Hbb)$ the \textbf{degree-$\leq k$ cycle space}.
\end{definition}

\begin{proposition}\label{prop:rank-filtration}
Let $K$ be the maximal tensor degree appearing in $\Hbb$. Then
$$\Zsp(\Hbb) = \Zsp_{\leq K}(\Hbb) \subseteq \Zsp_{\leq K-1}(\Hbb) \subseteq \cdots \subseteq \Zsp_{\leq 0}(\Hbb).$$
\end{proposition}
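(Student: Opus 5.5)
The plan is to obtain both claims, the equality at the top and the chain of inclusions, directly from how the kernels of composed maps behave. The observation maps $\pi_{\leq k}$ are nested through projections.

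\textbf{Step 1: the chain of inclusions.} For $j \leq k$, let $\pi^{k}_{\leq j} : T^{\leq k}(\F^{Q_0}) \to T^{\leq j}(\F^{Q_0})$ be the projection onto degrees $\leq j$. Then
$$\pi_{\leq j} = \pi^{k}_{\leq j} \circ \pi_{\leq k}.$$
Consequently $\Ker(\pi_{\leq k} \circ \partial_\beta) \subseteq \Ker(\pi_{\leq j} \circ \partial_\beta)$. In particular $\Zsp_{\leq k}(\Hbb) \subseteq \Zsp_{\leq k-1}(\Hbb)$ for each $1 \leq k \leq K$. Alternatively, one could invoke the corollary following Theorem~\ref{thm:obs-dim}, since
$$\Ker(\pi_{\leq k} \circ \hphi) \subseteq \Ker(\pi_{\leq k-1} \circ \hphi).$$
The direct factorization argument is cleaner, however.

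\textbf{Step 2: the equality $\Zsp(\Hbb) = \Zsp_{\leq K}(\Hbb)$.} Interpret $K$ as the maximal degree $j$ such that some $A_e$ or $B_e$ has a nonzero component in $T^j(\F^{Q_0})$. Since $Q_1$ is finite and each tensor in $T(\F^{Q_0})$ has only finitely many nonzero homogeneous components, $K$ is finite whenever $Q_1 \neq \emptyset$. In the degenerate case $Q_1 = \emptyset$, or when all boundary tensors vanish, every space in the chain equals $\F^{Q_1}$ and there is nothing to prove.

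Under this interpretation, every $\partial_\beta(\mathbf{1}_e) = B_e - A_e$ lies in $T^{\leq K}(\F^{Q_0})$, so $\Img(\partial_\beta) \subseteq T^{\leq K}(\F^{Q_0})$. The projection $\pi_{\leq K}$ restricts to the identity on $T^{\leq K}(\F^{Q_0})$, which gives
$$\pi_{\leq K} \circ \partial_\beta = \partial_\beta,$$
and hence $\Zsp_{\leq K}(\Hbb) = \Zsp(\Hbb)$.

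\textbf{Obstacle and optional remark.} The only genuine care needed is the precise meaning of ``maximal tensor degree appearing in $\Hbb$''. I would fix it as the largest degree carrying a nonzero component of some boundary tensor. Even if the phrase is read as the maximal degree among the components of $\partial_\beta$, the argument of Step 2 goes through unchanged. As a cross-check, I would record that Proposition~\ref{prop:observation-quotient} gives
$$\dim_\F \Zsp_{\leq k}(\Hbb) - \dim_\F \Zsp(\Hbb) = \dim_\F\bigl(\Img(\partial_\beta) \cap \Ker(\pi_{\leq k})\bigr),$$
which is zero at $k = K$ and nonincreasing in $k$. This is consistent with the chain, but it is not needed for the proof.
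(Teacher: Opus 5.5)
Your proposal is correct and follows essentially the same route as the paper: the inclusions come from the factorization of $\pi_{\leq k-1}$ through $\pi_{\leq k}$. The paper's proof leaves the top equality $\Zsp(\Hbb) = \Zsp_{\leq K}(\Hbb)$ implicit, and you make it explicit through $\Img(\partial_\beta) \subseteq T^{\leq K}(\F^{Q_0})$, so that $\pi_{\leq K} \circ \partial_\beta = \partial_\beta$.
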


\begin{proof}
$\Ker(\pi_{\leq k} \circ \partial_{\beta}) \supseteq \Ker(\partial_{\beta})$, and from $\pi_{\leq k-1} = \pi_{\leq k-1} \circ \pi_{\leq k}$ we get $\Ker(\pi_{\leq k-1}) \supseteq \Ker(\pi_{\leq k})$.
\end{proof}

\begin{definition}\label{def:rank-k-defect}
For each $k \geq 0$, the \textbf{degree-$\leq k$ defect invariant} is
$$\delta_{\leq k}(\Hbb) := \dim_{\F}\left(\Img(\Bmacro) \cap \Ker(\pi_{\leq k} \circ \hphi)\right).$$
In particular, for $k \geq K$ (observing all tensor components), $\pi_{\leq k} \circ \hphi = \hphi$ and $\delta_{\leq k}(\Hbb) = \delta(\Hbb)$.
\end{definition}

\begin{theorem}\label{thm:rank-dim}
For each $k \geq 0$,
\begin{equation}\label{eq:rank-dim}
\dim_{\F} \Zsp_{\leq k}(\Hbb) = |Q_1| - |\Vmacro| + \cmacro + \delta_{\leq k}(\Hbb).
\end{equation}
\end{theorem}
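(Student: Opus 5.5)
This is a direct specialization of Theorem~\ref{thm:obs-dim}. We take the observation map $\rho := \pi_{\leq k} : T(\F^{Q_0}) \to T^{\leq k}(\F^{Q_0})$, with $U' = T^{\leq k}(\F^{Q_0})$. By Definition~\ref{def:rank-k-obs}, $\Zsp_{\leq k}(\Hbb) = \Zsp_{\pi_{\leq k}}(\Hbb) = \Ker(\pi_{\leq k} \circ \partial_\beta)$. Comparing Definition~\ref{def:rank-k-defect} with Definition~\ref{def:obs-defect}, we get $\delta_{\leq k}(\Hbb) = \delta_{\pi_{\leq k}}(\Hbb)$ literally. Substituting these two identifications into \eqref{eq:obs-dim} gives \eqref{eq:rank-dim}.

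For a self-contained version, we would spell out the underlying mechanism. By Proposition~\ref{prop:tensor-factor}, $\pi_{\leq k} \circ \partial_\beta = (\pi_{\leq k} \circ \hphi) \circ \Bmacro$. Hence $\pi_{\leq k}\circ\partial_\beta$ is the vector-valued incidence map of $\Hmacro$ for the labeling $w \mapsto \pi_{\leq k}(w)$, whose linear extension is $\pi_{\leq k}\circ \hphi$. Theorem~\ref{thm:rank-nullity} with $D = \Hmacro$, $X = \Vmacro$, $E = Q_1$, $c(D) = \cmacro$ then yields the formula.

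We expect no genuine obstacle. The one point needing care is that the combinatorial data $|\Vmacro|$ and $\cmacro$ are those of the original macrograph, not of a macrograph built from the truncated tensors $\pi_{\leq k}(w)$, which may coincide for distinct $w$. We would remark that this causes no issue. Theorem~\ref{thm:rank-nullity} allows non-injective labelings, so any collapse under truncation is absorbed entirely into the defect term $\delta_{\leq k}(\Hbb)$.
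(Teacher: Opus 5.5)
Your proposal is correct and matches the paper's proof: both specialize Theorem~\ref{thm:obs-dim} to $\rho = \pi_{\leq k}$ via the factorization $\pi_{\leq k} \circ \partial_\beta = (\pi_{\leq k} \circ \hphi) \circ \Bmacro$ and Theorem~\ref{thm:rank-nullity}, then identify the defect term with $\delta_{\leq k}(\Hbb)$. Your remark that truncation-induced collapse of labels is absorbed into the defect term is accurate, since Theorem~\ref{thm:rank-nullity} permits arbitrary labelings.
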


\begin{proof}
Apply Theorem \ref{thm:obs-dim} with observation map $\rho := \pi_{\leq k}$. From $\rho \circ \partial_{\beta} = (\pi_{\leq k} \circ \hphi) \circ \Bmacro$, applying Theorem \ref{thm:rank-nullity} to the linear extension $\pi_{\leq k} \circ \hphi : \F^{\Vmacro} \to T^{\leq k}(\F^{Q_0})$ gives
$$\dim_{\F} \Ker(\rho \circ \partial_{\beta}) = |Q_1| - |\Vmacro| + \cmacro + \dim_{\F}\left(\Img(\Bmacro) \cap \Ker(\pi_{\leq k} \circ \hphi)\right).$$
The last term equals $\delta_{\leq k}(\Hbb)$ by Definition \ref{def:rank-k-defect}.
\end{proof}

\begin{corollary}\label{cor:filtration-chain}
If $k \leq k'$, then $\Ker(\pi_{\leq k} \circ \hphi) \supseteq \Ker(\pi_{\leq k'} \circ \hphi)$, so
$$\delta_{\leq 0}(\Hbb) \geq \delta_{\leq 1}(\Hbb) \geq \cdots \geq \delta_{\leq K}(\Hbb) = \delta(\Hbb).$$
\end{corollary}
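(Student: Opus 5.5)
The statement to prove is Corollary~\ref{cor:filtration-chain}. The plan is a direct kernel-inclusion argument followed by monotonicity of dimension.

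\emph{Step 1: the kernel inclusion.} For $k \leq k'$, I will show the projections factor as
$$\pi_{\leq k} = \pi_{\leq k} \circ \pi_{\leq k'}.$$
Here $\pi_{\leq k}$ on the right is regarded as a map on $T^{\leq k'}(\F^{Q_0})$. This identity holds because both sides keep exactly the homogeneous components of degree $\leq k$ and discard the rest. Composing on the right with $\hphi$ gives $\pi_{\leq k}\circ\hphi = \pi_{\leq k}\circ(\pi_{\leq k'}\circ\hphi)$. Hence any $\eta\in\F^{\Vmacro}$ with $\pi_{\leq k'}(\hphi(\eta))=0$ also satisfies $\pi_{\leq k}(\hphi(\eta))=0$, which is the inclusion $\Ker(\pi_{\leq k}\circ\hphi)\supseteq\Ker(\pi_{\leq k'}\circ\hphi)$. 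This is the same observation already used in the proof of Proposition~\ref{prop:rank-filtration}.

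\emph{Step 2: monotonicity of the defects.} Intersecting both kernels with the fixed subspace $\Img(\Bmacro)$ preserves the inclusion:
$$\Img(\Bmacro)\cap\Ker(\pi_{\leq k}\circ\hphi)\supseteq \Img(\Bmacro)\cap\Ker(\pi_{\leq k'}\circ\hphi).$$
Taking dimensions, Definition~\ref{def:rank-k-defect} gives $\delta_{\leq k}(\Hbb)\geq\delta_{\leq k'}(\Hbb)$. Applying this with consecutive indices $k' = k+1$ produces the chain.

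\emph{Step 3: the final equality.} It remains to identify $\delta_{\leq K}(\Hbb)=\delta(\Hbb)$. Every element of $\Vmacro$ is some $A_e$ or $B_e$, and these have degree at most $K$ by the definition of $K$. So $\hphi(\F^{\Vmacro})=\Span_\F(\Vmacro)\subseteq T^{\leq K}(\F^{Q_0})$. On this subspace $\pi_{\leq K}$ acts as the identity, so $\pi_{\leq K}\circ\hphi=\hphi$, and the two intersections coincide.

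The only point needing care is this last step: $K$ must bound the degrees of all boundary tensors in $\Vmacro$, not merely the degrees appearing in $\partial_\beta$. For instance, if $A_e=B_e$ had a high-degree component, that component would cancel in $B_e - A_e$. I interpret ``maximal tensor degree appearing in $\Hbb$'' as the maximal degree among all $A_e, B_e$, which is exactly the reading that makes Definition~\ref{def:rank-k-defect}'s remark valid. Under that reading the whole argument is routine.
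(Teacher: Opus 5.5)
Your proof is correct and follows the paper's argument. The factorization $\pi_{\leq k} = \pi_{\leq k}\circ\pi_{\leq k'}$ gives the kernel inclusion, intersecting with $\Img(\Bmacro)$ gives the monotone chain, and the endpoint $\delta_{\leq K}(\Hbb) = \delta(\Hbb)$ is the remark in Definition~\ref{def:rank-k-defect}.

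Your caveat about how to read $K$ is more cautious than needed. For $\eta = \Bmacro(\xi)$ one has $\hphi(\eta) = \partial_\beta(\xi)$, so $\pi_{\leq K}\circ\hphi$ and $\hphi$ already agree on $\Img(\Bmacro)$ once $\Img(\partial_\beta)\subseteq T^{\leq K}(\F^{Q_0})$. That is all the intersection with $\Img(\Bmacro)$ depends on.
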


\begin{proof}
From $\pi_{\leq k} = \pi_{\leq k} \circ \pi_{\leq k'}$ ($k \leq k'$), we have $\Ker(\pi_{\leq k} \circ \hphi) \supseteq \Ker(\pi_{\leq k'} \circ \hphi)$. The dimensions of intersections with $\Img(\Bmacro)$ are therefore monotonically non-increasing.
\end{proof}

The associated graded pieces of the filtration $\{\Zsp_{\leq k}(\Hbb)\}_k$ realize, at each tensor degree $k$, the space of cycles captured at degree $\leq k - 1$ but not at degree $\leq k$. We formulate this in the next proposition.

\begin{proposition}\label{prop:gr-realization}
Let $K$ be the maximal tensor degree appearing in $\Hbb$. For each $1 \leq k \leq K$, let $\pi_k : T(\F^{Q_0}) \to T^k(\F^{Q_0})$ be the projection onto degree $k$, and put $\partial_\beta^{(k)} := \pi_k \circ \partial_\beta : \F^{Q_1} \to T^k(\F^{Q_0})$. Then $\Zsp_{\leq k}(\Hbb) = \bigcap_{j=0}^{k} \Ker(\partial_\beta^{(j)})$, and the kernel of the restriction
$$\partial_\beta^{(k)}\big|_{\Zsp_{\leq k-1}(\Hbb)} : \Zsp_{\leq k-1}(\Hbb) \longrightarrow T^k(\F^{Q_0})$$
is $\Zsp_{\leq k}(\Hbb)$. In particular, $\Zsp_{\leq k-1}(\Hbb) / \Zsp_{\leq k}(\Hbb) \cong \partial_\beta^{(k)}(\Zsp_{\leq k-1}(\Hbb))$, and
$$\dim_\F\left(\Zsp_{\leq k-1}(\Hbb) / \Zsp_{\leq k}(\Hbb)\right) = \delta_{\leq k-1}(\Hbb) - \delta_{\leq k}(\Hbb).$$
\end{proposition}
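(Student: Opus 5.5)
The plan is to prove the three assertions in order, using only the graded decomposition $T(\F^{Q_0}) = \bigoplus_j T^j(\F^{Q_0})$ and the dimension formula of Theorem~\ref{thm:rank-dim}.

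First, the identification $\Zsp_{\leq k}(\Hbb) = \bigcap_{j=0}^{k} \Ker(\partial_\beta^{(j)})$. Since $T^{\leq k}(\F^{Q_0}) = \bigoplus_{j\le k} T^j(\F^{Q_0})$ is a direct sum, an element $x$ satisfies $\pi_{\leq k}(x)=0$ if and only if $\pi_j(x)=0$ for every $j\le k$. Applying this to $x=\partial_\beta(\xi)$ gives
$$\xi\in\Ker(\pi_{\leq k}\circ\partial_\beta) \iff \partial_\beta^{(j)}(\xi)=0 \text{ for all } 0\le j\le k.$$
Here $\partial_\beta^{(0)} := \pi_0\circ\partial_\beta$ is defined in the same way as the $\partial_\beta^{(j)}$ with $j\ge 1$.

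Second, the kernel of the restriction. The first step gives $\Zsp_{\leq k}(\Hbb)=\Zsp_{\leq k-1}(\Hbb)\cap\Ker(\partial_\beta^{(k)})$. This is exactly the kernel of $\partial_\beta^{(k)}$ restricted to $\Zsp_{\leq k-1}(\Hbb)$. The first isomorphism theorem then yields
$$\Zsp_{\leq k-1}(\Hbb)/\Zsp_{\leq k}(\Hbb)\cong \partial_\beta^{(k)}\bigl(\Zsp_{\leq k-1}(\Hbb)\bigr).$$
The inclusion $\Zsp_{\leq k}\subseteq\Zsp_{\leq k-1}$ is Proposition~\ref{prop:rank-filtration}.

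Third, the dimension count. Since $\Zsp_{\leq k}(\Hbb) \subseteq \Zsp_{\leq k-1}(\Hbb)$, the quotient has dimension $\dim\Zsp_{\leq k-1}(\Hbb)-\dim\Zsp_{\leq k}(\Hbb)$. Theorem~\ref{thm:rank-dim}, applied at $k-1$ and at $k$, gives both dimensions. The common term $|Q_1|-|\Vmacro|+\cmacro$ cancels, leaving $\delta_{\leq k-1}(\Hbb)-\delta_{\leq k}(\Hbb)$.

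There is no real obstacle in this argument. The only point needing care is that $k-1\ge 0$ requires $k\ge 1$, which the hypothesis $1\le k\le K$ ensures. One should also note that the direct-sum property of the grading is what makes $\pi_{\leq k}(x)=0$ equivalent to the vanishing of each homogeneous component.
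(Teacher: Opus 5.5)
Your proposal is correct and matches the paper's proof step for step. The graded direct-sum decomposition gives the intersection description, hence $\Zsp_{\leq k}(\Hbb) = \Zsp_{\leq k-1}(\Hbb) \cap \Ker(\partial_\beta^{(k)})$; the first isomorphism theorem gives the quotient, and Theorem~\ref{thm:rank-dim} at $k-1$ and $k$ gives the dimension. Your remark that $\partial_\beta^{(0)} := \pi_0 \circ \partial_\beta$ must be defined analogously is a fair small clarification, since the statement only defines $\partial_\beta^{(k)}$ for $k \geq 1$.
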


\begin{proof}
For any $\xi \in \F^{Q_1}$, $\pi_{\leq k}(\partial_\beta(\xi)) = 0$ if and only if all degree-$0, 1, \ldots, k$ components vanish, so $\Zsp_{\leq k}(\Hbb) = \Ker(\pi_{\leq k} \circ \partial_\beta) = \bigcap_{j=0}^{k} \Ker(\partial_\beta^{(j)})$. Consequently $\Zsp_{\leq k}(\Hbb) = \Zsp_{\leq k-1}(\Hbb) \cap \Ker(\partial_\beta^{(k)})$, and the kernel of the restriction of $\partial_\beta^{(k)}$ to $\Zsp_{\leq k-1}(\Hbb)$ is precisely $\Zsp_{\leq k}(\Hbb)$. The isomorphism follows from the first isomorphism theorem, and the dimension equality from Theorem \ref{thm:rank-dim}.
\end{proof}

\section{The edge Gram operator of the tensor incidence operator}\label{sec:spectrum}

We introduce an inner product structure over $\F = \R$ and study the edge-edge symmetric positive semi-definite operator $\Lap := \partial_{\beta}^{*} \partial_{\beta}$, which is the Gram matrix of the family $\{\partial_{\beta}(\mathbf{1}_e)\}$ of edge difference vectors. The operator $\Lap$ is formally analogous to the classical graph Laplacian \cite{Chung, ChungSpectral}. In the hypergraph setting, alternative spectral notions have been studied via tensor eigenvalues \cite{CooperDutle} and via second-eigenvalue estimates \cite{FriedmanWigderson}. Whether the spectrum of $\Lap$ reflects the adjacency structure depends strongly on the construction: in Construction (2) it coincides with the classical edge Laplacian of a directed graph (Proposition \ref{prop:dir-graph-Lap}), while for a simple graph in Construction (1) it depends only on the number of edges (Proposition \ref{prop:loopless-simple-Lap}). We treat $\Lap$ here as an edge operator of Gram type, and give combinatorial descriptions of its kernel ($= \Zsp(\Hbb)$) and rank.

\subsection{The edge Gram operator \texorpdfstring{$\Lap$}{L\_beta}}

Throughout this section we work over $\F = \R$. The same results hold over $\C$ using the Hermitian inner product with the standard basis as an orthonormal basis and defining the adjoint as the conjugate transpose (for example, $\Lap = B_D^{\top} B_D$ in Proposition \ref{prop:dir-graph-Lap} becomes $\Lap = B_D^{*} B_D$ over $\C$). We restrict to the real case for the simplicity of notation. All defect invariants in this section are computed over $\R$. We write $\delta_{\R}(\Hbb)$ when explicit reference to the field is required, and continue to write $\delta(\Hbb)$ when no ambiguity arises.

Equip $\R^{Q_0}$ with the standard inner product having $\{\mathbf{1}_v\}_{v \in Q_0}$ as an orthonormal basis, and equip $\R^{Q_1}$ with the standard inner product having $\{\mathbf{1}_e\}_{e \in Q_1}$ as an orthonormal basis. Under the identification of Remark \ref{rem:word-space}, equip the tensor algebra $T(\R^{Q_0})$ with the inner product $\langle \cdot, \cdot \rangle$ having the standard basis $\mathcal{B}$ \eqref{eq:standard-basis} as an orthonormal basis. Explicitly, on pure tensors,
\begin{equation}\label{eq:tensor-inner-product}
\langle u_1 \otimes \cdots \otimes u_k, w_1 \otimes \cdots \otimes w_l \rangle = \delta_{kl} \prod_{i=1}^{k} \langle u_i, w_i \rangle_{\R^{Q_0}},
\end{equation}
where $\delta_{kl}$ is the Kronecker delta and pure tensors of different degrees are orthogonal.

\begin{proposition}\label{prop:adjoint}
The adjoint $\partial_{\beta}^{*} : T(\R^{Q_0}) \to \R^{Q_1}$ of the tensor incidence operator $\partial_{\beta} : \R^{Q_1} \to T(\R^{Q_0})$ is given by
\begin{equation}\label{eq:adjoint}
\partial_{\beta}^{*}(w) = \sum_{e \in Q_1} \langle B_e - A_e, w \rangle \cdot \mathbf{1}_e \quad (w \in T(\R^{Q_0})).
\end{equation}
In particular, $\Img(\partial_{\beta}^{*}) = \Ker(\partial_{\beta})^{\perp} = \Zsp(\Hbb)^{\perp}$.
\end{proposition}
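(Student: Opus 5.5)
The plan is to verify the formula directly from the definition of the adjoint, i.e.\ from the identity $\langle \partial_\beta(\xi), w\rangle = \langle \xi, \partial_\beta^*(w)\rangle_{\R^{Q_1}}$ for all $\xi \in \R^{Q_1}$ and $w \in T(\R^{Q_0})$. One technical point needs attention first. $T(\R^{Q_0})$ is infinite-dimensional, but it is the algebraic direct sum $\bigoplus_k T^k(\R^{Q_0})$. Every element is therefore a finite combination of elements of the orthonormal basis $\mathcal{B}$, so the inner product \eqref{eq:tensor-inner-product} is well defined. I would also record that $\partial_\beta$ has finite-dimensional domain, so its image lies in the finite-dimensional subspace $T^{\le K}(\R^{Q_0})$. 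As a result, no completeness or boundedness issues arise.

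Define the candidate map $S(w) := \sum_{e} \langle B_e - A_e, w\rangle \mathbf{1}_e$, which is clearly linear in $w$. For a basis vector $\mathbf{1}_e$ we have
$$\langle \mathbf{1}_e, S(w)\rangle = \langle B_e - A_e, w\rangle = \langle \partial_\beta(\mathbf{1}_e), w\rangle.$$
Extending by linearity in $\xi$ gives $\langle \xi, S(w)\rangle = \langle \partial_\beta \xi, w\rangle$ for all $\xi$ and $w$. Uniqueness of the adjoint follows by testing against the orthonormal basis $\{\mathbf{1}_e\}$: any map $S'$ with the same property satisfies $\langle \mathbf{1}_e, S(w) - S'(w)\rangle = 0$ for every $e$, hence $S' = S$. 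This establishes \eqref{eq:adjoint}.

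For the second assertion I would show $\Img(\partial_\beta^*)^{\perp} = \Ker(\partial_\beta)$ inside the finite-dimensional space $\R^{Q_1}$. Take $\xi \perp \Img(\partial_\beta^*)$. Then $\langle \partial_\beta \xi, w\rangle = 0$ for all $w$, and choosing $w = \partial_\beta\xi$ gives $\|\partial_\beta \xi\|^2 = 0$, so $\xi \in \Ker(\partial_\beta)$. Conversely, if $\partial_\beta\xi = 0$ then $\langle \xi, \partial_\beta^* w\rangle = \langle \partial_\beta\xi, w\rangle = 0$ for every $w$. Since $\R^{Q_1}$ is finite-dimensional, taking orthogonal complements once more yields $\Img(\partial_\beta^*) = \Ker(\partial_\beta)^{\perp} = \Zsp(\Hbb)^{\perp}$, using Definition~\ref{def:tensor-incidence}.

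The main obstacle is minor: making sure the infinite-dimensionality of $T(\R^{Q_0})$ does not invalidate the usual finite-dimensional adjoint argument. This is handled by the observation above that everything is finitely supported. In particular, both the double-complement step and the choice $w = \partial_\beta\xi$ take place within $\R^{Q_1}$ and $T^{\le K}(\R^{Q_0})$.
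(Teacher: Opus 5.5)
Your proof is correct and follows the paper's argument: check $\langle \partial_\beta \xi, w\rangle = \bigl\langle \xi, \sum_e \langle B_e - A_e, w\rangle \mathbf{1}_e \bigr\rangle$ by linearity in $\xi$, then invoke the definition of the adjoint. Your extra derivation of $\Img(\partial_\beta^*) = \Ker(\partial_\beta)^{\perp}$ is also sound; it shows $\Img(\partial_\beta^*)^{\perp} = \Ker(\partial_\beta)$ and takes a double complement in the finite-dimensional space $\R^{Q_1}$, which fills in a step the paper states without proof.
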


\begin{proof}
For any $\xi = \sum_e a_e \mathbf{1}_e \in \R^{Q_1}$ and $w \in T(\R^{Q_0})$,
\begin{align*}
\langle \partial_{\beta}(\xi), w \rangle &= \left\langle \sum_e a_e (B_e - A_e), w \right\rangle = \sum_e a_e \langle B_e - A_e, w \rangle \\
&= \left\langle \xi, \sum_e \langle B_e - A_e, w \rangle \mathbf{1}_e \right\rangle_{\R^{Q_1}}.
\end{align*}
Since this holds for every $\xi$, the definition of the adjoint yields \eqref{eq:adjoint}.
\end{proof}

\begin{definition}\label{def:Laplacian}
The \textbf{edge Gram operator}\footnote{This is precisely the Gram matrix of the family $\{\partial_{\beta}(\mathbf{1}_e)\}_{e \in Q_1}$ of edge difference vectors. Since $\partial_{\beta}$ corresponds to the classical incidence matrix, $\Lap$ is formally analogous to the classical graph Laplacian, but its spectrum does not in general reflect the adjacency structure of the graph (Proposition \ref{prop:loopless-simple-Lap}).}  $\Lap : \R^{Q_1} \to \R^{Q_1}$ of $\Hbb = (Q_0, Q_1, \beta)$ is defined by
\begin{equation}\label{eq:Laplacian-def}
\Lap := \partial_{\beta}^{*} \partial_{\beta}.
\end{equation}
\end{definition}

\begin{proposition}\label{prop:Lap-structure}
$\Lap$ is a symmetric positive semi-definite operator on $\R^{Q_1}$, and the matrix entries in the standard basis $\{\mathbf{1}_e\}_{e \in Q_1}$ are
\begin{equation}\label{eq:Laplacian-matrix}
(\Lap)_{e, e'} = \langle B_e - A_e, B_{e'} - A_{e'} \rangle \quad (e, e' \in Q_1).
\end{equation}
In particular, all eigenvalues of $\Lap$ are non-negative real numbers.
\end{proposition}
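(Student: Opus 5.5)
The plan is to verify the three assertions of Proposition~\ref{prop:Lap-structure} directly from the definition $\Lap = \partial_{\beta}^{*}\partial_{\beta}$ and the adjoint formula of Proposition~\ref{prop:adjoint}. No combinatorial input is needed, only the inner product structure on $\R^{Q_1}$ and $T(\R^{Q_0})$.

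First, for the matrix entries, we compute $\langle \mathbf{1}_e, \Lap \mathbf{1}_{e'}\rangle_{\R^{Q_1}}$. By the defining property of the adjoint this equals $\langle \partial_{\beta}(\mathbf{1}_e), \partial_{\beta}(\mathbf{1}_{e'})\rangle$. By \eqref{eq:tensor-incidence} this is $\langle B_e - A_e, B_{e'} - A_{e'}\rangle$, which gives \eqref{eq:Laplacian-matrix}. Equivalently, we can substitute $w = B_{e'} - A_{e'}$ into \eqref{eq:adjoint} and read off the coefficient of $\mathbf{1}_e$. One small point must be noted: the inner product on $T(\R^{Q_0})$ is a genuine (finite-sum) inner product. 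Every element of $T(\R^{Q_0})$ is a finite linear combination of elements of $\mathcal{B}$, so the adjoint exists. Its image lies in $\R^{Q_1}$, which is finite-dimensional, so no completion issues arise. Here $\partial_{\beta}$ has finite-dimensional domain, and the adjoint is given explicitly by \eqref{eq:adjoint}.

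Second, symmetry follows because the inner product on $T(\R^{Q_0})$ is symmetric. Hence $(\Lap)_{e,e'} = (\Lap)_{e',e}$, and $\Lap$ is self-adjoint with respect to the orthonormal basis $\{\mathbf{1}_e\}$. Positive semi-definiteness follows from $\langle \xi, \Lap \xi\rangle = \langle \partial_{\beta}\xi, \partial_{\beta}\xi\rangle = \|\partial_{\beta}\xi\|^2 \ge 0$ for every $\xi \in \R^{Q_1}$.

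Finally, the statement about eigenvalues uses the spectral theorem for real symmetric matrices: $\Lap$ is orthogonally diagonalizable with real eigenvalues. If $\Lap \xi = \lambda \xi$ with $\xi \neq 0$, then $\lambda\|\xi\|^2 = \|\partial_{\beta}\xi\|^2 \ge 0$, so $\lambda \ge 0$. There is no real obstacle in this argument. The only step deserving care is justifying that the standard basis $\mathcal{B}$ gives a well-defined inner product on the infinite-dimensional $T(\R^{Q_0})$. This is immediate, since only finitely many coordinates of any element are nonzero, and all computations take place in the finite-dimensional subspace $T^{\le K}(\R^{Q_0})$ containing $\Img(\partial_{\beta})$.
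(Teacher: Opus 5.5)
Your proof is correct and follows essentially the same route as the paper: the matrix entries come from $\langle \Lap \mathbf{1}_{e'}, \mathbf{1}_e\rangle = \langle \partial_\beta \mathbf{1}_{e'}, \partial_\beta \mathbf{1}_e\rangle$, symmetry comes from the symmetry of the inner product, and positive semi-definiteness comes from $\langle \Lap\xi,\xi\rangle = \|\partial_\beta\xi\|^2 \geq 0$. Your added remarks on why the adjoint exists (the domain $\R^{Q_1}$ is finite-dimensional) and the eigenvalue step $\lambda\|\xi\|^2 = \|\partial_\beta \xi\|^2$ only spell out points the paper leaves implicit.
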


\begin{proof}
The matrix entry in the standard basis is
$$\langle \Lap \mathbf{1}_{e'}, \mathbf{1}_e \rangle = \langle \partial_{\beta} \mathbf{1}_{e'}, \partial_{\beta} \mathbf{1}_e \rangle = \langle B_{e'} - A_{e'}, B_e - A_e \rangle,$$
which by the symmetry of the inner product coincides with the right side of \eqref{eq:Laplacian-matrix}. Thus $\Lap$ is represented by a symmetric matrix. Positive semi-definiteness follows from
$$\langle \Lap \xi, \xi \rangle = \langle \partial_{\beta}^{*} \partial_{\beta} \xi, \xi \rangle = \langle \partial_{\beta} \xi, \partial_{\beta} \xi \rangle = \|\partial_{\beta} \xi\|^2 \geq 0$$
for any $\xi \in \R^{Q_1}$.
\end{proof}

\begin{proposition}\label{prop:kernel-Lap}
The kernel of the edge Gram operator $\Lap$ coincides with the tensor cycle space $\Zsp(\Hbb)$ of $\Hbb$:
$$\Ker(\Lap) = \Zsp(\Hbb).$$
\end{proposition}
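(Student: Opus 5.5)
We prove the two inclusions $\Zsp(\Hbb) \subseteq \Ker(\Lap)$ and $\Ker(\Lap) \subseteq \Zsp(\Hbb)$ separately, using only the definition $\Lap = \partial_\beta^* \partial_\beta$ and the positive semi-definiteness computation from Proposition~\ref{prop:Lap-structure}.

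First, if $\xi \in \Zsp(\Hbb) = \Ker(\partial_\beta)$, then $\Lap \xi = \partial_\beta^*(\partial_\beta \xi) = \partial_\beta^*(0) = 0$, so $\xi \in \Ker(\Lap)$. This gives $\Zsp(\Hbb) \subseteq \Ker(\Lap)$ immediately.

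Conversely, suppose $\Lap \xi = 0$. Taking the inner product with $\xi$ and using the identity already derived in the proof of Proposition~\ref{prop:Lap-structure}, we obtain
$$0 = \langle \Lap \xi, \xi \rangle = \langle \partial_\beta \xi, \partial_\beta \xi \rangle = \|\partial_\beta \xi\|^2 .$$
Since $\langle \cdot,\cdot\rangle$ on $T(\R^{Q_0})$ is a genuine inner product, with the standard basis $\mathcal{B}$ orthonormal, it is positive definite. Hence $\partial_\beta \xi = 0$, that is, $\xi \in \Zsp(\Hbb)$.

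The argument has essentially no obstacle. The only point needing care is that $\partial_\beta$ takes values in the infinite-dimensional space $T(\R^{Q_0})$, so the adjoint and the norm should be justified there. This causes no trouble: $\Img(\partial_\beta)$ lies in the finite-dimensional subspace $T^{\leq K}(\R^{Q_0})$. Moreover, the explicit adjoint formula of Proposition~\ref{prop:adjoint} involves only finite sums, so all quantities are well defined.

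As an alternative consistency check, Proposition~\ref{prop:adjoint} gives $\Img(\partial_\beta^*) = \Zsp(\Hbb)^\perp$. Then $\Ker(\Lap) = \Ker(\partial_\beta)$ also follows from the general fact that $\Ker(T^*T) = \Ker(T)$ for any linear map $T$ between real inner product spaces.
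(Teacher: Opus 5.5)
Your proof is correct and is the paper's argument: the trivial inclusion $\Zsp(\Hbb)\subseteq\Ker(\Lap)$, then $0=\langle \Lap\xi,\xi\rangle=\|\partial_\beta\xi\|^2$ combined with positive definiteness of the inner product on $T(\R^{Q_0})$. Your remark that $\Img(\partial_\beta)$ lies in the finite-dimensional space $T^{\leq K}(\R^{Q_0})$, so the adjoint and norm are well defined, is a sensible justification that the paper leaves implicit.
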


\begin{proof}
The inclusion $\Zsp(\Hbb) \subseteq \Ker(\Lap)$ is immediate. For the reverse inclusion, let $\xi \in \Ker(\Lap)$. Then $\Lap(\xi) = 0$ gives $\langle \Lap(\xi), \xi \rangle = 0$. By the definition of the adjoint,
$$\langle \Lap(\xi), \xi \rangle = \langle \partial_\beta^* \partial_\beta(\xi), \xi \rangle = \langle \partial_\beta(\xi), \partial_\beta(\xi) \rangle = \|\partial_\beta(\xi)\|^2,$$
so $\|\partial_\beta(\xi)\|^2 = 0$. By the positive definiteness of the inner product, $\partial_\beta(\xi) = 0$, that is, $\xi \in \Ker(\partial_\beta) = \Zsp(\Hbb)$.
\end{proof}

\begin{theorem}\label{thm:spectrum}
The rank of the edge Gram operator $\Lap$ is expressed in terms of  $|\Vmacro|$,  $\cmacro$, and $\delta(\Hbb)$ as
\begin{equation}\label{eq:spectrum-formula}
\rank(\Lap) = |Q_1| - \dim_{\R} \Zsp(\Hbb) = |\Vmacro| - \cmacro - \delta(\Hbb).
\end{equation}
In particular, the sum of the multiplicities of the non-zero eigenvalues of $\Lap$ also equals this value.
\end{theorem}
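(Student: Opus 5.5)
The argument is a direct application of rank--nullity combined with the kernel identification already established. I would first apply rank--nullity to the symmetric operator $\Lap$ on $\R^{Q_1}$, which gives $\rank(\Lap) = |Q_1| - \dim_\R \Ker(\Lap)$. Proposition~\ref{prop:kernel-Lap} gives $\Ker(\Lap) = \Zsp(\Hbb)$, and substituting yields the first equality $\rank(\Lap) = |Q_1| - \dim_\R \Zsp(\Hbb)$.

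For the second equality I would insert the dimension formula of Theorem~\ref{thm:tensor-dim}, specialized to $\F = \R$:
\[
\dim_\R \Zsp(\Hbb) = |Q_1| - |\Vmacro| + \cmacro + \delta(\Hbb).
\]
Subtracting from $|Q_1|$ gives $|\Vmacro| - \cmacro - \delta(\Hbb)$. As a consistency check, the same value follows from Corollary~\ref{cor:tensor-defect-rank-drop}, because $\rank(\Lap) = \rank(\partial_\beta^*\partial_\beta) = \rank(\partial_\beta)$: the kernels coincide and both maps have domain $\R^{Q_1}$.

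For the eigenvalue statement, I would use Proposition~\ref{prop:Lap-structure}: $\Lap$ is real symmetric, so it is orthogonally diagonalizable. Its rank is therefore the number of non-zero eigenvalues counted with multiplicity. Equivalently, the multiplicity of the eigenvalue $0$ is $\dim \Ker(\Lap)$, so the non-zero multiplicities sum to $|Q_1| - \dim\Ker(\Lap)$.

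There is no real obstacle here. The only point needing care is that $\delta(\Hbb)$ in Theorem~\ref{thm:tensor-dim} must be taken over $\R$, which the section's convention $\delta = \delta_\R$ already ensures.
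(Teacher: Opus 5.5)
Your proposal is correct and follows essentially the same route as the paper: identify $\Ker(\Lap)=\Zsp(\Hbb)$, use $\rank(\Lap)=|Q_1|-\dim_{\R}\Zsp(\Hbb)$ together with orthogonal diagonalizability to get the multiplicity statement, and substitute Theorem~\ref{thm:tensor-dim}. Your consistency check via $\rank(\partial_\beta^*\partial_\beta)=\rank(\partial_\beta)$ and Corollary~\ref{cor:tensor-defect-rank-drop} is a valid independent confirmation, but it is not needed.
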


\begin{proof}
Since $\Lap$ is symmetric positive semi-definite, it is diagonalizable and $\R^{Q_1}$ decomposes as an orthogonal direct sum of eigenspaces. The eigenspace at $0$ is $\Ker(\Lap)$, which coincides with $\Zsp(\Hbb)$ by Proposition \ref{prop:kernel-Lap}, so the multiplicity of $0$ is $\dim_{\R} \Zsp(\Hbb)$. The complementary subspace of dimension $|Q_1| - \dim_{\R} \Zsp(\Hbb)$ is the orthogonal direct sum of the non-zero eigenspaces. Its dimension is $\rank(\Lap)$ and equals the total multiplicity of the non-zero eigenvalues. By Theorem \ref{thm:tensor-dim},
$$\dim_{\R} \Zsp(\Hbb) = |Q_1| - |\Vmacro| + \cmacro + \delta(\Hbb),$$
and substituting this gives \eqref{eq:spectrum-formula}.
\end{proof}

\begin{remark}\label{rem:rank-interpretation}
The quantities $|\Vmacro|$ and $\cmacro$ on the right side of Theorem \ref{thm:spectrum} are purely combinatorial invariants of the associated macrograph $\Hmacro$, whereas $\delta(\Hbb) = \dim_{\R}(\Img(\Bmacro) \cap \Ker(\hphi))$ measures the linear dependencies among the tensor labels via the evaluation map $\hphi$. In the general framework of Definition \ref{def:tensor-hg}, $\delta_\F(\Hbb)$ may depend on the base field or characteristic, so the rank formula \eqref{eq:spectrum-formula} reduces to purely combinatorial invariants only when $\delta_\R(\Hbb) = 0$. For $\Hbb$ following a single standard construction, Theorem \ref{thm:defect-vanishing} guarantees $\delta_\F(\Hbb) = 0$ for every field $\F$, and in that case $\rank(\Lap) = |\Vmacro| - \cmacro$.
\end{remark}

\begin{corollary}\label{cor:spectral-bounds}
Let $\lambda_{\max}$ and $\lambda_{\min}^{+}$ be the largest eigenvalue of $\Lap$  and the smallest non-zero eigenvalue of $\Lap$ , respectively. Let $P_{\Zsp}$ be the orthogonal projection onto $\Zsp(\Hbb)$. Then for any $\xi \in \R^{Q_1}$,
\begin{equation}\label{eq:spectral-bounds}
\lambda_{\min}^{+} \|\xi - P_{\Zsp} \xi\|^2 \leq \|\partial_{\beta} \xi\|^2 \leq \lambda_{\max} \|\xi - P_{\Zsp} \xi\|^2.
\end{equation}
\end{corollary}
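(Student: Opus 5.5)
The plan is to reduce the corollary to a Rayleigh-quotient estimate for $\Lap$ on the orthogonal complement of its kernel. Put $\eta := \xi - P_{\Zsp}\xi$. By Proposition~\ref{prop:kernel-Lap}, $P_{\Zsp}\xi \in \Zsp(\Hbb) = \Ker(\partial_\beta)$, so $\partial_\beta \xi = \partial_\beta \eta$. Hence it suffices to prove
$$\lambda_{\min}^{+}\|\eta\|^2 \le \|\partial_\beta\eta\|^2 \le \lambda_{\max}\|\eta\|^2 \quad\text{for } \eta \in \Zsp(\Hbb)^{\perp}.$$

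Next, I rewrite the middle term as a quadratic form. The computation in the proof of Proposition~\ref{prop:Lap-structure} gives $\|\partial_\beta\eta\|^2 = \langle \Lap\eta,\eta\rangle$. Since $\Lap$ is symmetric (Proposition~\ref{prop:Lap-structure}), the spectral theorem provides an orthonormal eigenbasis. As in the proof of Theorem~\ref{thm:spectrum}, $\Zsp(\Hbb)^{\perp} = \Ker(\Lap)^{\perp}$ is the orthogonal direct sum of the eigenspaces for the non-zero eigenvalues. Each of these eigenvalues lies in $[\lambda_{\min}^{+},\lambda_{\max}]$, and they are positive by positive semi-definiteness.

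Now I expand $\eta = \sum_i c_i u_i$ in an orthonormal eigenbasis $\{u_i\}$ of $\Zsp(\Hbb)^{\perp}$ with eigenvalues $\lambda_i$. This gives $\langle\Lap\eta,\eta\rangle = \sum_i \lambda_i c_i^2$ and $\|\eta\|^2 = \sum_i c_i^2$. Bounding each $\lambda_i$ between $\lambda_{\min}^{+}$ and $\lambda_{\max}$ yields both inequalities at once.

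The only delicate point is the degenerate case $\rank(\Lap)=0$, where $\lambda_{\min}^{+}$ is undefined. In that case $\Zsp(\Hbb)^{\perp}=0$, so $\eta=0$ and both sides vanish; the statement then holds under any convention, and I will make this explicit. Otherwise the argument is routine; the main care needed is the identification $\Zsp(\Hbb)^{\perp} = \Ker(\Lap)^{\perp}$, which is exactly what Proposition~\ref{prop:kernel-Lap} provides.
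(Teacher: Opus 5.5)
Your proof is correct and follows the paper's argument: both restrict to the component of $\xi$ in $\Zsp(\Hbb)^{\perp}=\Ker(\Lap)^{\perp}$ (using Proposition~\ref{prop:kernel-Lap}) and bound the quadratic form there by the extreme non-zero eigenvalues. The differences are cosmetic. You drop $P_{\Zsp}\xi$ via $\partial_\beta(P_{\Zsp}\xi)=0$ rather than via $\Lap(P_{\Zsp}\xi)=0$, and you treat the degenerate case $\rank(\Lap)=0$ explicitly, which the paper leaves implicit.
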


\begin{proof}
The diagonalizability of $\Lap$ gives the orthogonal direct sum $\R^{Q_1} = \Ker(\Lap) \oplus \Ker(\Lap)^{\perp}$. By Proposition \ref{prop:kernel-Lap}, $\Ker(\Lap) = \Zsp(\Hbb)$, so $\xi$ decomposes as $\xi = P_{\Zsp} \xi + \xi^{\perp}$ with $\xi^{\perp} := \xi - P_{\Zsp} \xi \in \Zsp(\Hbb)^{\perp}$. Since $\Lap(P_{\Zsp} \xi) = 0$,
$$\|\partial_{\beta} \xi\|^2 = \langle \Lap \xi, \xi \rangle = \langle \Lap \xi^{\perp}, \xi^{\perp} \rangle.$$
The eigenvalues of $\Lap$ restricted to $\Zsp(\Hbb)^{\perp}$ are positive real numbers in the interval $[\lambda_{\min}^{+}, \lambda_{\max}]$, so
$$\lambda_{\min}^{+} \|\xi^{\perp}\|^2 \leq \langle \Lap \xi^{\perp}, \xi^{\perp} \rangle \leq \lambda_{\max} \|\xi^{\perp}\|^2,$$
which yields \eqref{eq:spectral-bounds}.
\end{proof}

\subsection{Observation Gram operators and spectral filtration}\label{subsec:obs-Gram}

Corresponding to the degree filtration $\{\Zsp_{\leq k}(\Hbb)\}_k$ introduced in Section \ref{sec:observation}, we construct a filtration of edge Gram operators.

\begin{definition}\label{def:obs-Lap}
Let $\pi_{\leq k} : T(\R^{Q_0}) \to T^{\leq k}(\R^{Q_0})$ be the orthogonal projection onto degrees $\leq k$, and $\pi_k : T(\R^{Q_0}) \to T^k(\R^{Q_0})$ the orthogonal projection onto degree $k$. The compositions
$$\partial_{\leq k} := \pi_{\leq k} \circ \partial_{\beta} : \R^{Q_1} \to T^{\leq k}(\R^{Q_0}), \quad \partial_{\beta}^{(k)} := \pi_k \circ \partial_{\beta} : \R^{Q_1} \to T^k(\R^{Q_0})$$
are called the \textbf{degree-$\leq k$ observed tensor incidence operator} and the \textbf{degree-$k$ component incidence operator}, respectively. The compositions with their adjoints,
$$L_{\leq k} := \partial_{\leq k}^{*} \partial_{\leq k} : \R^{Q_1} \to \R^{Q_1}, \quad L^{(k)} := (\partial_{\beta}^{(k)})^{*} \partial_{\beta}^{(k)} : \R^{Q_1} \to \R^{Q_1},$$
are called the \textbf{degree-$\leq k$ observation Gram operator} and the \textbf{degree-$k$ component Gram operator}, respectively.
\end{definition}

From the self-adjointness $\pi_{\leq k}^{*} = \pi_{\leq k}$ and $\pi_k^{*} = \pi_k$ of the orthogonal projections, together with the orthogonality of distinct degree components of $T(\R^{Q_0})$, for any $\xi \in \R^{Q_1}$,
\begin{equation}\label{eq:obs-quad}
\langle L_{\leq k} \xi, \xi \rangle = \|\pi_{\leq k} \partial_{\beta}(\xi)\|^2 = \sum_{j=0}^{k} \|\partial_{\beta}^{(j)}(\xi)\|^2.
\end{equation}

\begin{proposition}\label{prop:obs-Lap-sum}
Let $K$ be the maximal tensor degree appearing in $\Hbb$. For each $k \geq 0$,
$$L_{\leq k} = \sum_{j=0}^{k} L^{(j)},$$
and in particular $\Lap = L_{\leq K} = \sum_{j=0}^{K} L^{(j)}$. Consequently, for any $\xi \in \R^{Q_1}$,
\begin{equation}\label{eq:norm-decomposition}
\|\partial_{\beta}(\xi)\|^2 = \sum_{j=0}^{K} \|\partial_{\beta}^{(j)}(\xi)\|^2,
\end{equation}
which is an orthogonal decomposition of the squared norm of $\partial_{\beta}(\xi)$ by degree, equivalently a degree-wise orthogonal decomposition of the quadratic form $\xi \mapsto \langle \Lap \xi, \xi \rangle$.
\end{proposition}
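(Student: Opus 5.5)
The plan is to prove the operator identity $L_{\leq k} = \sum_{j=0}^{k} L^{(j)}$ by a direct adjoint computation, and then read off \eqref{eq:norm-decomposition} by pairing with $\xi$.

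First, the degree decomposition $T^{\leq k}(\R^{Q_0}) = \bigoplus_{j=0}^{k} T^j(\R^{Q_0})$ is orthogonal for the inner product \eqref{eq:tensor-inner-product}, since pure tensors of different degrees are orthogonal. Hence the orthogonal projection satisfies $\pi_{\leq k} = \sum_{j=0}^{k} \pi_j$, where each $\pi_j$ is self-adjoint and idempotent, and $\pi_i \pi_j = 0$ for $i \neq j$. Consequently $\pi_{\leq k}^{*}\pi_{\leq k} = \pi_{\leq k}^2 = \pi_{\leq k} = \sum_{j=0}^{k} \pi_j = \sum_{j=0}^{k} \pi_j^{*}\pi_j$.

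Sandwiching this identity between $\partial_\beta^{*}$ and $\partial_\beta$ gives
$$L_{\leq k} = \partial_\beta^{*}\pi_{\leq k}^{*}\pi_{\leq k}\partial_\beta = \sum_{j=0}^{k} \partial_\beta^{*}\pi_j^{*}\pi_j\partial_\beta = \sum_{j=0}^{k} (\partial_\beta^{(j)})^{*}\partial_\beta^{(j)} = \sum_{j=0}^{k} L^{(j)}.$$
Here the adjoint of $\partial_\beta^{(j)} = \pi_j \circ \partial_\beta$ is taken as a map into $T(\R^{Q_0})$, or equivalently into $T^j(\R^{Q_0})$ with the restricted inner product; the two readings give the same composite operator on $\R^{Q_1}$.

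For the special case $k = K$, note that every $A_e$ and $B_e$ lies in $T^{\leq K}(\R^{Q_0})$ by the definition of $K$. Therefore $\pi_{\leq K}\partial_\beta = \partial_\beta$, which gives $L_{\leq K} = \Lap$.

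Finally, \eqref{eq:norm-decomposition} follows by taking $\langle \cdot\,\xi, \xi\rangle$ of the identity $\Lap = \sum_{j=0}^{K} L^{(j)}$ and using $\langle L^{(j)}\xi,\xi\rangle = \|\partial_\beta^{(j)}\xi\|^2$. It can also be read off directly from \eqref{eq:obs-quad} at $k = K$.

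The only step needing care is the claim $\pi_{\leq k} = \sum_{j} \pi_j$ with mutually orthogonal summands. This holds precisely because the standard basis $\mathcal{B}$ is orthonormal and each of its elements is homogeneous; the rest is formal.
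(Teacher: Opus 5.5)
Your proposal is correct and follows essentially the same route as the paper: write $\pi_{\leq k}=\sum_{j=0}^{k}\pi_j$ as a sum of mutually orthogonal self-adjoint idempotents, sandwich it between $\partial_\beta^{*}$ and $\partial_\beta$, use $\pi_{\leq K}\partial_\beta=\partial_\beta$ to get $L_{\leq K}=\Lap$, and read off the norm identity from \eqref{eq:obs-quad} at $k=K$. Your remark that the adjoint of $\partial_\beta^{(j)}$ may be taken into either $T^j(\R^{Q_0})$ or $T(\R^{Q_0})$ without changing $L^{(j)}$ is a small point the paper leaves implicit.
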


\begin{proof}
By the orthogonality of distinct degree componants, $\pi_{\leq k} = \sum_{j=0}^{k} \pi_j$, where each $\pi_j$ is self-adjoint and satisfies $\pi_j \pi_{j'} = \delta_{jj'} \pi_j$. Hence
$$L_{\leq k} = \partial_{\beta}^{*} \pi_{\leq k} \partial_{\beta} = \partial_{\beta}^{*} \left(\sum_{j=0}^{k} \pi_j\right) \partial_{\beta} = \sum_{j=0}^{k} \partial_{\beta}^{*} \pi_j \partial_{\beta} = \sum_{j=0}^{k} L^{(j)}.$$
The equality $\Lap = L_{\leq K}$ follows from $\partial_{\beta}(\R^{Q_1}) \subseteq T^{\leq K}(\R^{Q_0})$, giving $\pi_{\leq K} \partial_{\beta} = \partial_{\beta}$. The orthogonal decomposition \eqref{eq:norm-decomposition} is obtained by setting $k = K$ in \eqref{eq:obs-quad}.
\end{proof}

\begin{theorem}\label{thm:obs-Lap-spectrum}
The degree-$\leq k$ observation Gram operator $L_{\leq k}$ satisfies the following.
\begin{enumerate}
\item[\textup{(1)}] $L_{\leq k}$ is a symmetric positive semi-definite operator on $\R^{Q_1}$.
\item[\textup{(2)}] $\Ker(L_{\leq k}) = \Zsp_{\leq k}(\Hbb)$.
\item[\textup{(3)}] $\rank(L_{\leq k}) = |\Vmacro| - \cmacro - \delta_{\leq k}(\Hbb)$.
\item[\textup{(4)}] If $k \leq \ell$, then $L_{\leq k} \preceq L_{\leq \ell}$ in the Loewner order. Writing the eigenvalues of $L_{\leq k}$ in ascending order as $0 \leq \lambda_1^{(k)} \leq \cdots \leq \lambda_{|Q_1|}^{(k)}$, we have $\lambda_i^{(k)} \leq \lambda_i^{(\ell)}$ for each $i$.
\end{enumerate}
\end{theorem}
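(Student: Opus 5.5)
The plan is to treat $L_{\leq k}$ exactly as $\Lap$ was treated in Propositions~\ref{prop:Lap-structure} and~\ref{prop:kernel-Lap} and Theorem~\ref{thm:spectrum}, with $\partial_\beta$ replaced by $\partial_{\leq k}$. Then the Loewner comparison comes from the degree decomposition of Proposition~\ref{prop:obs-Lap-sum}.

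For (1), note that $L_{\leq k} = \partial_{\leq k}^*\partial_{\leq k}$ is manifestly self-adjoint. Its quadratic form is $\langle L_{\leq k}\xi,\xi\rangle = \|\partial_{\leq k}\xi\|^2 \geq 0$ by \eqref{eq:obs-quad}, so it is positive semi-definite.

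For (2), the inclusion $\Ker(\partial_{\leq k}) \subseteq \Ker(L_{\leq k})$ is trivial. Conversely, if $L_{\leq k}\xi = 0$, then $0 = \langle L_{\leq k}\xi,\xi\rangle = \|\pi_{\leq k}\partial_\beta\xi\|^2$. Positive definiteness of the inner product on $T^{\leq k}(\R^{Q_0})$ then forces $\pi_{\leq k}\partial_\beta\xi = 0$, that is, $\xi \in \Zsp_{\leq k}(\Hbb)$ by Definition~\ref{def:rank-k-obs}.

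For (3), I would apply rank--nullity to $L_{\leq k}$ on $\R^{Q_1}$. This gives $\rank(L_{\leq k}) = |Q_1| - \dim \Zsp_{\leq k}(\Hbb)$, and substituting Theorem~\ref{thm:rank-dim} yields $|\Vmacro| - \cmacro - \delta_{\leq k}(\Hbb)$. The only care needed is that the degree-$\leq k$ defect is computed over $\R$, which is the standing convention of this section.

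For (4), let $k \leq \ell$. Proposition~\ref{prop:obs-Lap-sum} gives
\[
L_{\leq \ell} - L_{\leq k} = \sum_{j=k+1}^{\ell} L^{(j)},
\]
and each $L^{(j)} = (\partial_\beta^{(j)})^*\partial_\beta^{(j)}$ is positive semi-definite, since $\langle L^{(j)}\xi,\xi\rangle = \|\partial^{(j)}_\beta\xi\|^2$. Hence $L_{\leq k} \preceq L_{\leq \ell}$.

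The eigenvalue inequality then follows from the Courant--Fischer min-max characterization:
\[
\lambda_i^{(k)} = \min_{\dim S = i}\ \max_{\xi \in S,\,\|\xi\|=1} \langle L_{\leq k}\xi,\xi\rangle,
\]
and the pointwise inequality $\langle L_{\leq k}\xi,\xi\rangle \leq \langle L_{\leq \ell}\xi,\xi\rangle$ passes through both the max and the min. This is Weyl's monotonicity theorem, which I would cite rather than reprove.

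No step is a real obstacle. The only point requiring attention is (3): one must confirm that Theorem~\ref{thm:rank-dim} holds for $\pi_{\leq k}$ regarded as the orthogonal projection. It does, because the orthogonal projection coincides with the algebraic degree projection, since the homogeneous components are mutually orthogonal.
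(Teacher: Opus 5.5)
Your proposal is correct and follows the paper's proof essentially step for step. Parts (1)--(3) go through the quadratic-form identity \eqref{eq:obs-quad} and Theorem~\ref{thm:rank-dim}, and part (4) through nonnegativity of the degree-wise difference followed by Courant--Fischer; the only cosmetic difference is that you write $L_{\leq \ell} - L_{\leq k} = \sum_{j=k+1}^{\ell} L^{(j)}$ via Proposition~\ref{prop:obs-Lap-sum}, whereas the paper reads the same nonnegativity directly off \eqref{eq:obs-quad}.
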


\begin{proof}
(1) Symmetry is immediate from the form $L_{\leq k} = \partial_{\leq k}^{*} \partial_{\leq k}$, and positive semi-definiteness is clear from \eqref{eq:obs-quad}.

(2) Since $L_{\leq k}$ is positive semi-definite, $\xi \in \Ker(L_{\leq k})$ if and only if $\langle L_{\leq k} \xi, \xi \rangle = 0$. By \eqref{eq:obs-quad} this is equivalent to $\pi_{\leq k} \partial_{\beta}(\xi) = 0$, that is, $\xi \in \Ker(\pi_{\leq k} \circ \partial_{\beta}) = \Zsp_{\leq k}(\Hbb)$.

(3) By (2), $\rank(L_{\leq k}) = |Q_1| - \dim \Zsp_{\leq k}(\Hbb)$, and combining with Theorem \ref{thm:rank-dim} gives the formula.

(4) For $k \leq \ell$, applying \eqref{eq:obs-quad} to the difference $L_{\leq \ell} - L_{\leq k}$, for any $\xi \in \R^{Q_1}$,
$$\langle (L_{\leq \ell} - L_{\leq k}) \xi, \xi \rangle = \sum_{j=k+1}^{\ell} \|\partial_{\beta}^{(j)}(\xi)\|^2 \geq 0.$$
Hence $L_{\leq \ell} - L_{\leq k}$ is positive semi-definite, that is, $L_{\leq k} \preceq L_{\leq \ell}$ in the Loewner order. The monotonicity of the eigenvalues follows from the Courant-Fischer min-max principle.
\end{proof}

\begin{corollary}\label{cor:rank-increment}
Under the conventions $\delta_{\leq -1}(\Hbb) := |\Vmacro| - \cmacro$ and $L_{\leq -1} := 0$, for each $k \geq 0$, the following equation holds.
$$\rank(L_{\leq k}) - \rank(L_{\leq k-1}) = \delta_{\leq k-1}(\Hbb) - \delta_{\leq k}(\Hbb)$$
\end{corollary}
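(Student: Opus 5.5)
The plan is to read off both ranks from Theorem~\ref{thm:obs-Lap-spectrum}~(3) and subtract. Two cases arise, depending on whether $k-1 \geq 0$ or $k = 0$.

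\emph{Case $k \geq 1$.} Here $k-1 \geq 0$, so Theorem~\ref{thm:obs-Lap-spectrum}~(3) applies at both indices:
$$\rank(L_{\leq k}) = |\Vmacro| - \cmacro - \delta_{\leq k}(\Hbb), \qquad \rank(L_{\leq k-1}) = |\Vmacro| - \cmacro - \delta_{\leq k-1}(\Hbb).$$
Subtracting, the combinatorial term $|\Vmacro| - \cmacro$ cancels and leaves $\delta_{\leq k-1}(\Hbb) - \delta_{\leq k}(\Hbb)$, as claimed. As a cross-check, the same increment equals $\dim_\R(\Zsp_{\leq k-1}(\Hbb)/\Zsp_{\leq k}(\Hbb))$ by Proposition~\ref{prop:gr-realization}. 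This matches, since $\rank(L_{\leq j}) = |Q_1| - \dim \Zsp_{\leq j}(\Hbb)$ by Theorem~\ref{thm:obs-Lap-spectrum}~(2).

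\emph{Case $k = 0$.} By convention $L_{\leq -1} = 0$, so its rank is $0$. The left side is therefore $\rank(L_{\leq 0}) = |\Vmacro| - \cmacro - \delta_{\leq 0}(\Hbb)$ by Theorem~\ref{thm:obs-Lap-spectrum}~(3). Using the convention $\delta_{\leq -1}(\Hbb) = |\Vmacro| - \cmacro$, the right side is $(|\Vmacro| - \cmacro) - \delta_{\leq 0}(\Hbb)$. The two sides agree.

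I do not expect a real obstacle; the only care needed is the boundary case $k=0$. The conventions are chosen precisely so that the formula holds there. They are also consistent with $\delta_{\leq -1}$ being the defect for the zero observation map: then $\Ker(0 \circ \hphi) = \F^{\Vmacro}$, and its intersection with $\Img(\Bmacro)$ has dimension $\rank(\Bmacro) = |\Vmacro| - \cmacro$ by Lemma~\ref{lem:classical-rank}. I would mention this in one line to justify the convention.
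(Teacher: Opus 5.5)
Your proof is correct and follows essentially the paper's route. The paper also reads off both ranks from Theorem~\ref{thm:obs-Lap-spectrum}~(3), notes that the conventions make that formula valid at $k=-1$ (so that $\rank(L_{\leq -1})=0$), and then cites Proposition~\ref{prop:gr-realization}, which you use only as a cross-check. Your separate handling of $k=0$ and your justification of the convention through the zero observation map are sound.
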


\begin{proof}
By Theorem \ref{thm:obs-Lap-spectrum}~(3), $\rank(L_{\leq k}) = |\Vmacro| - \cmacro - \delta_{\leq k}(\Hbb)$, and under the convention $k = -1$ this also gives $\rank(L_{\leq -1}) = 0$. The equation now follows from Proposition \ref{prop:gr-realization}.
\end{proof}

\subsection{Computations under standard constructions}

\subsubsection{Reduction to the classical edge Laplacian in Construction (2)}

\begin{proposition}\label{prop:dir-graph-Lap}
If $\Hbb$ follows Construction (2), then the tensor incidence operator $\partial_{\beta}$ coincides with the classical incidence matrix $B_D$, and
$$\Lap = B_D^{\top} B_D.$$
That is, $\Lap$ coincides with the signed edge Laplacian of the directed graph.
\end{proposition}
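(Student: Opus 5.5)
The plan is to compare the two linear maps on the standard basis $\{\mathbf{1}_e\}_{e \in Q_1}$, after identifying the degree-one part of the tensor algebra with $\R^{Q_0}$. Under Construction (2), $\beta(e) = (s(e), t(e))$ with $s(e), t(e) \in Q_0 \subset T^1(\R^{Q_0})$. Here each vertex $v$ is identified with $\mathbf{1}_v$, as in Remark~\ref{rem:word-space}. Hence
$$\partial_\beta(\mathbf{1}_e) = \mathbf{1}_{t(e)} - \mathbf{1}_{s(e)} = B_D(\mathbf{1}_e).$$
So $\partial_\beta$ takes values in $T^1(\R^{Q_0}) = \R^{Q_0}$. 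Composed with the inclusion $\iota : \R^{Q_0} = T^1(\R^{Q_0}) \hookrightarrow T(\R^{Q_0})$, it agrees with $B_D$ on a basis, and therefore $\partial_\beta = \iota \circ B_D$ by linearity. This gives the first assertion, in the sense already noted after Construction (2) and in Section~\ref{subsec:Z-vs-classical}.

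For the Gram operator, the key point is that $\iota$ is an isometry. The inner product \eqref{eq:tensor-inner-product} restricted to degree one is the standard inner product on $\R^{Q_0}$, because the standard basis $\mathcal{B}$ contains $\{\mathbf{1}_v\}_{v\in Q_0}$ as an orthonormal family. Thus $\iota^*\iota = \id_{\R^{Q_0}}$, and
$$\Lap = \partial_\beta^*\partial_\beta = B_D^*\iota^*\iota B_D = B_D^* B_D.$$
Since $\R^{Q_1}$ and $\R^{Q_0}$ carry the standard inner products with orthonormal standard bases, the adjoint $B_D^*$ is represented by the transpose $B_D^{\top}$. Hence $\Lap = B_D^\top B_D$.

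As a cross-check, I would also verify the result entrywise via Proposition~\ref{prop:Lap-structure}:
$$(\Lap)_{e,e'} = \langle \mathbf{1}_{t(e)} - \mathbf{1}_{s(e)}, \mathbf{1}_{t(e')} - \mathbf{1}_{s(e')}\rangle.$$
This is exactly the $(e,e')$ entry of $B_D^\top B_D$, namely the signed edge Laplacian.

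No step is a real obstacle. The only point needing care is that the codomain of $\partial_\beta$ is the full tensor algebra while that of $B_D$ is $\R^{Q_0}$, which is why the identification is made through the isometric embedding of degree one. Loops and parallel edges need no special treatment, since a loop gives a zero column in both maps.
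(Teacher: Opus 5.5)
Your proof is correct and follows the same route as the paper. You compare $\partial_\beta$ with $B_D$ on the basis $\{\mathbf{1}_e\}$, use that the tensor inner product restricts to the standard one on $T^1(\R^{Q_0})$, and read off $\Lap = B_D^{\top}B_D$; your isometric inclusion $\iota$, giving $\partial_\beta^* = B_D^{\top}\iota^*$ with $\iota^*\iota = \id$, just makes precise the paper's direct identification $\partial_\beta^* = B_D^{\top}$, whose codomain is really all of $T(\R^{Q_0})$.
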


\begin{proof}
In Construction (2), $A_e = s(e)$ and $B_e = t(e) \in Q_0 \subset T^1(\R^{Q_0})$ for each edge $e$. The inner product \eqref{eq:tensor-inner-product} restricted to $T^1(\R^{Q_0})$ is the standard inner product of $\R^{Q_0}$, so $\partial_{\beta}(\mathbf{1}_e) = t(e) - s(e) = B_D(\mathbf{1}_e)$ as $\R$-linear maps. The adjoint $\partial_{\beta}^{*} = B_D^{\top}$ is the transpose with respect to the standard inner product, and $\Lap = \partial_{\beta}^{*} \partial_{\beta} = B_D^{\top} B_D$.
\end{proof}

\subsubsection{\texorpdfstring{$\Lap$}{L\_beta} for loopless simple graphs in Construction (1)}

\begin{proposition}\label{prop:loopless-simple-Lap}
Let $\Hbb$ be an undirected graph encoded via Construction (1) that contains neither loop edges nor parallel edges. Set $m := |Q_1|$. Then
\begin{equation}\label{eq:Lap-2I-J}
\Lap = 2 I_m + J_m,
\end{equation}
where $I_m$ is the $m \times m$ identity matrix and $J_m$ is the $m \times m$ matrix with all entries equal to $1$.
\end{proposition}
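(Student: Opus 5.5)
The plan is to compute the matrix entries directly from the formula $(\Lap)_{e,e'} = \langle B_e - A_e, B_{e'} - A_{e'}\rangle$ of Proposition~\ref{prop:Lap-structure}. In Construction~(1) with no loops, every edge $e$ with $\psi(e) = \{u_e, v_e\}$, $u_e \neq v_e$, has $B_e - A_e = 1 - S_e$, where $S_e := u_e \otimes v_e + v_e \otimes u_e \in T^2(\R^{Q_0})$. Since $1 \in T^0$ and $S_e \in T^2$ lie in different degrees, they are orthogonal for the inner product \eqref{eq:tensor-inner-product}. This gives
$$\langle 1 - S_e, 1 - S_{e'}\rangle = \langle 1, 1\rangle + \langle S_e, S_{e'}\rangle = 1 + \langle S_e, S_{e'}\rangle.$$
So the whole computation reduces to the Gram matrix of the symmetric tensors $S_e$.

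For the diagonal entries, I will expand $S_e$ in the standard basis $\mathcal{B}$. Because $u_e \neq v_e$, the words $u_e \otimes v_e$ and $v_e \otimes u_e$ are distinct orthonormal basis elements. Hence $\|S_e\|^2 = 2$, and $(\Lap)_{e,e} = 3$.

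For the off-diagonal entries $e \neq e'$, the absence of parallel edges gives $\{u_e, v_e\} \neq \{u_{e'}, v_{e'}\}$. The supports $\{u_e\otimes v_e, v_e\otimes u_e\}$ and $\{u_{e'}\otimes v_{e'}, v_{e'}\otimes u_{e'}\}$ are therefore disjoint, by the same observation used in the proof of Lemma~\ref{lem:Vmacro-nonzero-indep}. A word $x \otimes y$ determines the unordered pair $\{x, y\}$, so it cannot lie in both supports. Orthonormality of $\mathcal{B}$ then yields $\langle S_e, S_{e'}\rangle = 0$, and so $(\Lap)_{e,e'} = 1$.

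Combining the two cases gives diagonal entries $3 = 2 + 1$ and off-diagonal entries $1$, which is exactly $2I_m + J_m$. There is no real obstacle here. The only points needing care are that the loopless hypothesis is used for $\|S_e\|^2 = 2$ (a loop would give $\|2\,v\otimes v\|^2 = 4$), and that the no-parallel-edges hypothesis is used for the vanishing of the cross terms (parallel edges would give $\langle S_e, S_{e'}\rangle = 2$).
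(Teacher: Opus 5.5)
Your proposal is correct and follows essentially the same route as the paper: the entries come from $\langle B_e - A_e, B_{e'} - A_{e'}\rangle$, degrees $0$ and $2$ are orthogonal, $\|u\otimes v + v\otimes u\|^2 = 2$ on the diagonal, and cross terms vanish when $\{u,v\}\neq\{u',v'\}$. The only cosmetic difference is that the paper checks this last vanishing by a Kronecker-delta expansion, while you use disjointness of supports in $\mathcal{B}$; the two arguments are equivalent.
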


\begin{proof}
In Construction (1), each non-loop edge $e = \{u, v\}$ ($u \neq v$) has $A_e = u \otimes v + v \otimes u \in T^2(\R^{Q_0})$ and $B_e = 1 \in T^0(\R^{Q_0})$, so $\partial_{\beta}(\mathbf{1}_e) = 1 - (u \otimes v + v \otimes u)$. By \eqref{eq:tensor-inner-product}, the components in $T^0(\R^{Q_0})$ and $T^2(\R^{Q_0})$ are orthogonal, and the standard basis $\{u \otimes v\}_{u, v \in Q_0}$ is orthonormal in $T^2(\R^{Q_0})$. Hence the diagonal entry is
$$(\Lap)_{e,e} = \|1\|^2 + \|u \otimes v + v \otimes u\|^2 = 1 + 2 = 3.$$
By the absence of parallel edges, distinct edges $e \neq e'$ correspond to distinct vertex pairs $\{u, v\} \neq \{u', v'\}$, so the inner product expansion
$$\langle u \otimes v + v \otimes u,\, u' \otimes v' + v' \otimes u' \rangle = \delta_{u,u'} \delta_{v,v'} + \delta_{u,v'} \delta_{v,u'} + \delta_{v,u'} \delta_{u,v'} + \delta_{v,v'} \delta_{u,u'}$$
vanishes term by term, giving $\langle A_e, A_{e'} \rangle = 0$. Hence the off-diagonal entry is
$$(\Lap)_{e,e'} = \langle 1, 1 \rangle - \langle 1, A_{e'} \rangle - \langle A_e, 1 \rangle + \langle A_e, A_{e'} \rangle = 1 - 0 - 0 + 0 = 1,$$
and combining diagonal entries $3$ with off-diagonal entries $1$ yields $\Lap = 2 I_m + J_m$.
\end{proof}

\begin{remark}\label{rem:Lap-rigidity}
Proposition \ref{prop:loopless-simple-Lap} asserts that for a loopless simple graph in Construction (1), $\Lap$ depends only on the number of edges $m$, hence its spectrum reflects no adjacency structure. The rigidity persists at the level of the observation filtration of Section~\ref{subsec:obs-Gram}: $L^{(0)} = J_m$ and $L^{(2)} = 2 I_m$ likewise depend only on $m$. This is a fundamental difference from the classical graph Laplacian and justifies treating $\Lap$ as an edge operator of Gram type rather than a ``Laplacian''. Non-trivial structural information requires parallel edges, loops, symmetric tensor degenerations, or an encoding such as Construction~(2) (Proposition~\ref{prop:dir-graph-Lap}, Theorem~\ref{thm:F2-recovery}).
\end{remark}

\begin{corollary}\label{cor:Kn-spectrum}
For the complete undirected graph $K_n$ ($n \geq 2$) on $n$ vertices encoded via Construction (1) with $m := \binom{n}{2}$, $\Lap = 2 I_m + J_m$, and the eigenvalues are $m + 2$ with multiplicity $1$ (eigenvector $(1, 1, \ldots, 1)^{\top}$) and $2$ with multiplicity $m - 1$ (eigenspace $\{(c_1, \ldots, c_m) \in \R^m : \sum_e c_e = 0\}$). In particular, $\Lap$ has full rank and $\Zsp(K_n) = 0$.
\end{corollary}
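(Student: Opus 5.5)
The plan is to deduce this corollary directly from Proposition~\ref{prop:loopless-simple-Lap}, then diagonalize $J_m$. For the first claim we only need to check that $K_n$ meets that proposition's hypotheses. Encoded via Construction~(1), $K_n$ has exactly one edge $\{u,v\}$ for each unordered pair $u\neq v$. Hence it is loopless, has no parallel edges, and $m=|Q_1|=\binom{n}{2}$. The proposition then gives $\Lap = 2I_m + J_m$.

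For the spectrum, we use the standard fact that $J_m=\mathbf{j}\mathbf{j}^{\top}$, where $\mathbf{j}=(1,\dots,1)^{\top}$, is a rank-one symmetric matrix. We have $J_m\mathbf{j}=m\mathbf{j}$, and $J_m c=(\sum_e c_e)\mathbf{j}=0$ whenever $c$ lies in the hyperplane $\{\sum_e c_e=0\}$. That hyperplane is exactly $\mathbf{j}^{\perp}$, so it has dimension $m-1$, and $\R^m=\R\mathbf{j}\oplus\mathbf{j}^{\perp}$. Adding $2I_m$ shifts every eigenvalue by $2$. This gives eigenvalue $m+2$ on $\R\mathbf{j}$ with multiplicity $1$, and eigenvalue $2$ on $\mathbf{j}^{\perp}$ with multiplicity $m-1$. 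When $m=1$ (that is, $n=2$) the second eigenspace is zero and the statement remains consistent.

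For the final claims, every eigenvalue is at least $2>0$, so $\Lap$ is invertible and $\rank(\Lap)=m$. By Proposition~\ref{prop:kernel-Lap}, $\Zsp(K_n)=\Ker(\Lap)=0$. As a cross-check, Theorem~\ref{thm:spectrum} together with Theorem~\ref{thm:defect-vanishing} gives $\rank(\Lap)=|\Vmacro|-\cmacro=(m+1)-1=m$. This holds because $\Vmacro$ consists of the $m$ distinct symmetric tensors together with $1$, and the macrograph is a connected star. None of these steps is a real obstacle; the only care needed is verifying the no-parallel-edge hypothesis and the degenerate case $n=2$.
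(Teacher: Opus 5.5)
Your proposal is correct and follows the paper's proof: you invoke Proposition~\ref{prop:loopless-simple-Lap}, read off the spectrum of $2I_m+J_m$ from that of $J_m$, and conclude $\Zsp(K_n)=\Ker(\Lap)=0$ via Proposition~\ref{prop:kernel-Lap}. The only differences are cosmetic: you deduce invertibility from all eigenvalues being at least $2$, whereas the paper uses $\det(\Lap)=(m+2)\,2^{m-1}>0$; and your rank cross-check via Theorem~\ref{thm:spectrum} is an optional extra.
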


\begin{proof}
$K_n$ is a loopless simple graph, so Proposition \ref{prop:loopless-simple-Lap} gives $\Lap = 2 I_m + J_m$. The eigenvalues of $J_m$ are $m$ with multiplicity $1$ (eigenvector $(1, \ldots, 1)^{\top}$) and $0$ with multiplicity $m - 1$ (eigenspace $\sum_e c_e = 0$), so the eigenvalues of $\Lap = 2 I_m + J_m$ are $m + 2$ and $2$. Since $\det(\Lap) = (m + 2) \cdot 2^{m-1} > 0$, $\Lap$ is invertible, $\Ker(\Lap) = 0$, and by Proposition \ref{prop:kernel-Lap}, $\Zsp(K_n) = 0$.
\end{proof}

\begin{remark}\label{rem:Z-not-classical}
The cycle space $\Zsp(\Hbb)$ in Construction (1) does not in general coincide with the classical undirected cycle space. For example, the classical cycle space of $K_n$ ($n \geq 3$) has dimension $\binom{n-1}{2}$, whereas Corollary \ref{cor:Kn-spectrum} gives $\Zsp(K_n) = \Ker(\Lap) = 0$. Recovery of the classical cycle space requires the observation map $\rho$ of Theorem \ref{thm:F2-recovery}.
\end{remark}

\begin{example}\label{ex:Lap-simple-graphs}
We compute $\Lap$ explicitly for typical loopless simple graphs.

For $K_3$, $m = 3$ and
$$\Lap = 2 I_3 + J_3 = \begin{pmatrix} 3 & 1 & 1 \\ 1 & 3 & 1 \\ 1 & 1 & 3 \end{pmatrix},$$
with eigenvalues $5$ of multiplicity $1$ (eigenvector $(1, 1, 1)^{\top}$) and $2$ of multiplicity $2$. We have $\det(\Lap) = 5 \cdot 4 = 20 \neq 0$.

For the path $P_n$ ($n \geq 2$), the number of edges is $m = n - 1$, so $\Lap = 2 I_{n-1} + J_{n-1}$, with eigenvalues $n + 1$ of multiplicity $1$ and $2$ of multiplicity $n - 2$. The set of boundary tensors is $\Vmacro = \{1\} \cup \{A_e\}_{e \in Q_1}$ of size $|\Vmacro| = n$, the associated macrograph is a star centred at $1$ with $\cmacro = 1$, and the single standard Construction (1) gives $\delta(P_n) = 0$ by Theorem \ref{thm:defect-vanishing}. Theorem \ref{thm:spectrum} yields $\rank(\Lap) = n - 1 - 0 = m$, consistent with the direct computation.

For the cycle $C_n$ ($n \geq 3$), the number of edges is $m = n$, so $\Lap = 2 I_n + J_n$, with eigenvalues $n + 2$ of multiplicity $1$ and $2$ of multiplicity $n - 1$. We have $|\Vmacro| = n + 1$, $\cmacro = 1$, $\delta(C_n) = 0$, hence $\rank(\Lap) = n = m$.

In each case, $\Lap$ has full rank and $\Zsp = 0$. The classical cycle space ($0$-dimensional for $P_n$, $1$-dimensional for $C_n$) does not appear directly in $\Zsp(\Hbb)$. Its recovery requires the observation map of Theorem \ref{thm:F2-recovery}.
\end{example}

\section{Correspondence with oriented hypergraph theory}\label{sec:related}

Oriented hypergraph theory \cite{ReffRusnak, Rusnak, ChenLiuRobinsonRusnakWang}, which has its precursors in the theory of signed graphs \cite{Zaslavsky} and bidirected graphs \cite{Bouchet}, describes vertex--edge incidences via signed scalar entries $\{-1, 0, +1\}$, and forms a parallel independent theory to the framework of this paper. In this section we construct a natural correspondence (a map) between the two, and make explicit the relation between their cycle spaces.

\begin{definition}\label{def:oriented-hg}
An \textbf{oriented hypergraph} $\Hbb_o = (Q_0, Q_1, \mathbb{B}^{\mathrm{oh}})$ is a triple consisting of a finite set of vertices $Q_0$, a finite set of hyperedges $Q_1$, and an incidence matrix $\mathbb{B}^{\mathrm{oh}} \in \{-1, 0, +1\}^{Q_0 \times Q_1}$. The entry $\mathbb{B}^{\mathrm{oh}}_{v, e} \in \{-1, 0, +1\}$ indicates whether the vertex $v$ is on the ``source side'' ($-1$), the ``target side'' ($+1$), or ``non-incident'' ($0$) for the hyperedge $e$.
\end{definition}

\begin{definition}\label{def:natural-F}
Given an oriented hypergraph $\Hbb_o = (Q_0, Q_1, \mathbb{B}^{\mathrm{oh}})$, we construct a directed tensor-labeled hypergraph $F(\Hbb_o) = (Q_0, Q_1, \beta_F)$ by
\begin{equation}\label{eq:natural-F}
\beta_F(e) := \left( \sum_{v : \mathbb{B}^{\mathrm{oh}}_{v, e} = -1} v, \quad \sum_{v : \mathbb{B}^{\mathrm{oh}}_{v, e} = +1} v \right) \in T^1(\F^{Q_0}) \times T^1(\F^{Q_0}).
\end{equation}
\end{definition}

\begin{remark}\label{rem:F-image}
By $\beta_F$, both $A_e$ and $B_e$ lie in $T^1(\F^{Q_0}) = \F^{Q_0}$. For a hyperedge $e$ with more than one source-side or target-side vertex, $A_e$ or $B_e$ is a non-trivial linear combination of vertex vectors, neither a pure tensor nor an image of $\Sym_k$. Hence $F(\Hbb_o)$ does not follow any of the standard constructions (1)--(6), but lies in the general framework of Definition \ref{def:tensor-hg}. Theorem \ref{thm:defect-vanishing}, which holds only under standard constructions, does not apply directly to $F(\Hbb_o)$, but the general theorems (Theorem \ref{thm:rank-nullity} and Theorem \ref{thm:tensor-dim}) do.
\end{remark}

\begin{theorem}\label{thm:comparison-oh}
For an oriented hypergraph $\Hbb_o = (Q_0, Q_1, \mathbb{B}^{\mathrm{oh}})$, the tensor cycle space of $F(\Hbb_o)$ coincides with the kernel of the oriented hypergraph incidence matrix:
\begin{equation}\label{eq:oh-comparison}
\Zsp(F(\Hbb_o)) = \Ker(\mathbb{B}^{\mathrm{oh}}).
\end{equation}
\end{theorem}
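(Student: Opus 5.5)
The plan is to show that $\partial_{\beta_F}$ and $\mathbb{B}^{\mathrm{oh}}$ are literally the same $\F$-linear map $\F^{Q_1} \to \F^{Q_0}$. We identify $T^1(\F^{Q_0})$ with $\F^{Q_0}$ via $v \leftrightarrow \mathbf{1}_v$ as in Remark~\ref{rem:word-space}, and regard $\mathbb{B}^{\mathrm{oh}}$ as the map sending $\mathbf{1}_e$ to its $e$-th column $\sum_{v} \mathbb{B}^{\mathrm{oh}}_{v,e}\mathbf{1}_v$. Equality of kernels then follows at once.

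The key step is a column-by-column computation. For each $e \in Q_1$, Definition~\ref{def:natural-F} gives
\[
\partial_{\beta_F}(\mathbf{1}_e) = B_e - A_e = \sum_{v:\,\mathbb{B}^{\mathrm{oh}}_{v,e}=+1} v \;-\; \sum_{v:\,\mathbb{B}^{\mathrm{oh}}_{v,e}=-1} v .
\]
Since each entry $\mathbb{B}^{\mathrm{oh}}_{v,e}$ lies in $\{-1,0,+1\}$, the vertices split into three disjoint classes. Vertices with entry $0$ contribute to neither sum. Vertices with entry $+1$ contribute $+v$ exactly once, and vertices with entry $-1$ contribute $-v$ exactly once. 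Hence the right-hand side equals $\sum_{v\in Q_0} \mathbb{B}^{\mathrm{oh}}_{v,e}\, v$, which is the $e$-th column of $\mathbb{B}^{\mathrm{oh}}$ read inside $T^1(\F^{Q_0})$.

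Both maps are $\F$-linear and agree on the basis $\{\mathbf{1}_e\}_{e \in Q_1}$, so $\partial_{\beta_F} = \iota \circ \mathbb{B}^{\mathrm{oh}}$. Here $\iota : \F^{Q_0} \hookrightarrow T(\F^{Q_0})$ is the inclusion as the degree-one component. Because $\iota$ is injective, $\Ker(\partial_{\beta_F}) = \Ker(\mathbb{B}^{\mathrm{oh}})$, which is \eqref{eq:oh-comparison}.

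There is no real obstacle. The only points to keep in view are the following.
\begin{enumerate}
\item Empty source or target sides give the zero vector in $T^1$, not the unit $1 \in T^0$; this is harmless, since the formula still holds.
\item A vertex cannot appear on both sides of the same hyperedge, because each entry of $\mathbb{B}^{\mathrm{oh}}$ takes a single value.
\item The entries $\pm1$ are interpreted in $\F$, so the statement holds over any field, even in characteristic $2$ where $-1 = +1$. The identity is coefficientwise and needs no sign distinctions.
\end{enumerate}
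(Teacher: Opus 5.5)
Your proposal is correct and follows essentially the same route as the paper. Both arguments compute the column identity $\partial_{\beta}(\mathbf{1}_e) = \sum_{v \in Q_0} \mathbb{B}^{\mathrm{oh}}_{v,e}\, v$ from the definition of $\beta_F$, extend by linearity, and finish with the linear independence of $Q_0$ in $T^1(\F^{Q_0})$. The paper phrases that last step as $\partial_\beta(\xi) = \sum_v (\mathbb{B}^{\mathrm{oh}}\xi)_v\, v$ vanishing if and only if $\mathbb{B}^{\mathrm{oh}}\xi = 0$, which is your injectivity of $\iota$.
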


\begin{proof}
By the definition of $\beta_F$, for each $e \in Q_1$, we have
$$\partial_\beta(\mathbf{1}_e) = B_e - A_e = \sum_{v : \mathbb{B}^{\mathrm{oh}}_{v, e} = +1} v - \sum_{v : \mathbb{B}^{\mathrm{oh}}_{v, e} = -1} v = \sum_{v \in Q_0} \mathbb{B}^{\mathrm{oh}}_{v, e} \cdot v.$$
By the $\F$-linearity of $\partial_\beta$, for any $\xi = \sum_e a_e \mathbf{1}_e \in \F^{Q_1}$,
$$\partial_\beta(\xi) = \sum_e a_e \sum_v \mathbb{B}^{\mathrm{oh}}_{v, e} v = \sum_v \left(\sum_e \mathbb{B}^{\mathrm{oh}}_{v, e} a_e\right) v = \sum_v (\mathbb{B}^{\mathrm{oh}} \xi)_v \cdot v.$$
Since $Q_0$ is a basis of $\F^{Q_0}$, the right side vanishes if and only if $\mathbb{B}^{\mathrm{oh}} \xi = 0$. Hence $\Zsp(F(\Hbb_o)) = \Ker(\partial_\beta) = \Ker(\mathbb{B}^{\mathrm{oh}})$.
\end{proof}

\begin{corollary}\label{cor:oh-dim-formula}
The kernel dimension of the incidence matrix $\mathbb{B}^{\mathrm{oh}}$ of an oriented hypergraph $\Hbb_o$ is
$$\dim_{\F} \Ker(\mathbb{B}^{\mathrm{oh}}) = |Q_1| - |\Vmacro(F(\Hbb_o))| + \cmacro(F(\Hbb_o)) + \delta(F(\Hbb_o)).$$
\end{corollary}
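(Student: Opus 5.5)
This is a direct combination of Theorem~\ref{thm:comparison-oh} with the general dimension formula, Theorem~\ref{thm:tensor-dim}. No hypothesis on the shape of the labels is needed. I would first note that $F(\Hbb_o) = (Q_0, Q_1, \beta_F)$ is a directed tensor-labeled hypergraph in the sense of Definition~\ref{def:tensor-hg}. Indeed, the data $\{(A_e, B_e)\}_{e \in Q_1}$ from \eqref{eq:natural-F} determine a unique $\F$-linear map $\beta_F : \F^{Q_1} \to T(\F^{Q_0}) \times T(\F^{Q_0})$. Its macrograph $\Hmacro(F(\Hbb_o))$, the number $\cmacro(F(\Hbb_o))$ and the defect $\delta(F(\Hbb_o))$ are therefore defined as in Definitions~\ref{def:macro} and~\ref{def:tensor-defect}.

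Next, I would apply Theorem~\ref{thm:tensor-dim} to $F(\Hbb_o)$. That theorem holds for an arbitrary directed tensor-labeled hypergraph; it rests only on the factorization $\partial_\beta = \hphi \circ \Bmacro$ (Proposition~\ref{prop:tensor-factor}) and Theorem~\ref{thm:rank-nullity}. It gives
\[
\dim_\F \Zsp(F(\Hbb_o)) = |Q_1| - |\Vmacro(F(\Hbb_o))| + \cmacro(F(\Hbb_o)) + \delta(F(\Hbb_o)).
\]
As stressed in Remark~\ref{rem:F-image}, I would not invoke Theorem~\ref{thm:defect-vanishing} here, so the defect term stays in the formula.

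Finally, Theorem~\ref{thm:comparison-oh} gives the equality of subspaces $\Zsp(F(\Hbb_o)) = \Ker(\mathbb{B}^{\mathrm{oh}})$ inside $\F^{Q_1}$. The two dimensions therefore agree, and substituting into the display above yields the claim.

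The only point needing care, which I expect to be the mild obstacle, is degenerate hyperedges. If $e$ has no source-side vertex, then $A_e = 0$, and similarly $B_e = 0$ if it has no target-side vertex; if $e$ has no incident vertex at all, both vanish. In these cases $0 \in \Vmacro$ is allowed, and an edge with $A_e = B_e$ is a macrograph loop. Both situations are already covered by the general framework, since $D$ may have loops and the labels may be zero. Thus no case split is required, but I would remark on this to confirm that Theorem~\ref{thm:tensor-dim} applies verbatim.
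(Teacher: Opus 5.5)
Your proposal is correct and follows the paper's proof: you identify $\Zsp(F(\Hbb_o))$ with $\Ker(\mathbb{B}^{\mathrm{oh}})$ via Theorem~\ref{thm:comparison-oh}, then apply Theorem~\ref{thm:tensor-dim} to $F(\Hbb_o)$. Your remark on degenerate hyperedges (zero labels, macrograph loops) is a harmless extra check, since the general framework already allows both.
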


\begin{proof}
By Theorem \ref{thm:comparison-oh}, $\dim_\F \Ker(\mathbb{B}^{\mathrm{oh}}) = \dim_\F \Zsp(F(\Hbb_o))$, and applying Theorem \ref{thm:tensor-dim} to the right side gives the claim.
\end{proof}

\begin{example}\label{ex:oh-defect}
Whereas $\delta(\Hbb) = 0$ for $\Hbb$ following the standard constructions (1)--(6) by Theorem \ref{thm:defect-vanishing}, examples with $\delta(F(\Hbb_o)) > 0$ arise naturally within the oriented hypergraph framework. We give one such example.

Let $Q_0 = \{a, b, c, d\}$, $Q_1 = \{e_0, e_1, e_2, e_3\}$, and define an oriented hypergraph $\Hbb_o$ as follows. Take $d$ as the unique source-side vertex of every $e_i$ ($\mathbb{B}^{\mathrm{oh}}_{d, e_i} = -1$), and target-side vertex sets
$$e_0 : \{c\}, \quad e_1 : \{a, c\}, \quad e_2 : \{b, c\}, \quad e_3 : \{a, b, c\}$$
(with $\mathbb{B}^{\mathrm{oh}}_{v, e_i} = +1$ for these vertices and $0$ otherwise). For each $e_i$, the source side $\{d\}$ and the target-side vertex set are disjoint, so $\Hbb_o$ is indeed an oriented hypergraph in the sense of Definition \ref{def:oriented-hg}. The correspondence $F$ of Definition \ref{def:natural-F} yields
\begin{align*}
& \beta_F(e_0) = (d,\ c), \quad \beta_F(e_1) = (d,\ a + c), \\ 
&\beta_F(e_2) = (d,\ b + c), \quad \beta_F(e_3) = (d,\ a + b + c).
\end{align*}

The associated macrograph $\Hmacro$ of $F(\Hbb_o)$ is a star with source tensor $d$ at the centre and four distinct target tensors $c, a + c, b + c, a + b + c$ as leaves, so $|\Vmacro| = 5$, $\cmacro = 1$, and there is no topological cycle. The tensor incidence operator gives
\begin{align*}
& \partial_\beta(\mathbf{1}_{e_0}) = c - d, \quad \partial_\beta(\mathbf{1}_{e_1}) = a + c - d, \\
& \partial_\beta(\mathbf{1}_{e_2}) = b + c - d, \quad \partial_\beta(\mathbf{1}_{e_3}) = a + b + c - d.
\end{align*}
Taking differences, $\Img(\partial_\beta) = \Span_\F\{c - d,\ a,\ b\}$, so $\rank_\F(\partial_\beta) = 3$. Applying Corollary \ref{cor:tensor-defect-rank-drop},
$$\delta(F(\Hbb_o)) = (|\Vmacro| - \cmacro) - \rank_\F(\partial_\beta) = (5 - 1) - 3 = 1.$$

We extract a generator of the cycle space explicitly. For $\xi := \mathbf{1}_{e_0} - \mathbf{1}_{e_1} - \mathbf{1}_{e_2} + \mathbf{1}_{e_3} \in \F^{Q_1}$,
$$\partial_\beta(\xi) = (c - d) - (a + c - d) - (b + c - d) + (a + b + c - d) = 0,$$
which corresponds to the case $w_0 = d$, $r = 4$, $(\alpha_i) = (1, -1, -1, 1)$ of Proposition \ref{prop:star-defect}. The tensor dimension formula \eqref{eq:tensor-dim} gives $\dim_\F \Zsp(F(\Hbb_o)) = 4 - 5 + 1 + 1 = 1$, so $\Zsp(F(\Hbb_o)) = \Span_\F\{\xi\}$, and via Theorem \ref{thm:comparison-oh} this simultaneously identifies $\Ker(\mathbb{B}^{\mathrm{oh}}) = \Span_\F\{\xi\}$. 
\end{example}

\begin{remark}\label{rem:defect-meaning}
The four target-side vertex sets $\{c\}, \{a, c\}, \{b, c\}, \{a, b, c\}$ of Example \ref{ex:oh-defect} are precisely $X \cap Y, X, Y, X \cup Y$ for $X = \{a, c\}$ and $Y = \{b, c\}$. The linear relation $\partial_\beta(\xi) = 0$ supporting the cycle $\xi$ reduces, after the source tensor $d$ cancels out, to the inclusion-exclusion identity for the indicator vectors on the target side:
$$\mathbf{1}_X + \mathbf{1}_Y = \mathbf{1}_{X \cup Y} + \mathbf{1}_{X \cap Y}.$$
Under the standard constructions (1)--(6), Lemma \ref{lem:Vmacro-nonzero-indep} ensures the $\F$-linear independence of non-zero boundary tensors, so such dependencies cannot arise and $\delta = 0$ as in Theorem \ref{thm:defect-vanishing}.

To realize $\delta(\mathcal{H}) > 0$, one needs incidence data bundling multiple vertices. Example \ref{ex:alg-cycle} achieved $\delta = 1$ by permitting signed linear combinations $a - b$ in the target tensors. Example \ref{ex:oh-defect}, by contrast, realizes $\delta > 0$ within the oriented hypergraph framework, where incidence data is restricted to vertex-set indicator vectors (entries $0$ or $1$). This shows that the defect invariant operates non-trivially even within this existing class.
\end{remark}

\begin{lemma}\label{lem:01-affine}
In a vector space over a field $\F$, an \textbf{indicator vector} is a vector each of whose entries is $0$ or $1$. Any family of three or fewer distinct indicator vectors is affinely independent over $\F$.
\end{lemma}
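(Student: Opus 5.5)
We argue by cases on the number of distinct indicator vectors. Zero vectors form the empty family, and one vector alone is always affinely independent. Two distinct vectors $x \neq y$ are affinely independent because $y - x \neq 0$. So the real content is the case of three distinct indicator vectors $x, y, z$, where we must show that $y - x$ and $z - x$ are $\F$-linearly independent. Following Proposition~\ref{prop:affine-defect}~(3), we may instead show that any relation $\alpha x + \beta y + \gamma z = 0$ with $\alpha + \beta + \gamma = 0$ forces $\alpha = \beta = \gamma = 0$.

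In the three-vector case we suppose, for contradiction, that $y - x$ and $z - x$ are dependent. Both differences are non-zero, so there is $\lambda \in \F$ with $z - x = \lambda (y - x)$ and $\lambda \neq 0$. Also $\lambda \neq 1$, since otherwise $z = y$. Every coordinate of $y - x$ and of $z - x$ lies in $\{-1, 0, 1\}$, and both vectors have the same support.

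Since $y \neq x$, we can choose a coordinate $i$ where $x_i \neq y_i$. By the symmetry $x \leftrightarrow 1 - x$ applied coordinatewise, we may assume $x_i = 0$ and $y_i = 1$, so $(y-x)_i = 1$. Then $(z - x)_i = z_i = \lambda$, and because $z_i \in \{0, 1\}$, this gives $\lambda \in \{0, 1\}$ in $\F$. Both values contradict what we established above. Note that only the values $0$ and $1$ of $\F$ are involved here, so the argument works in every characteristic, including characteristic $2$, where $-1 = 1$.

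Where we invoked symmetry, the only genuine point to check is the other sign case, $x_i = 1$ and $y_i = 0$. Then $(y - x)_i = -1$ and $(z - x)_i = z_i - 1 = -\lambda$, so $z_i = 1 - \lambda \in \{0, 1\}$. This again forces $\lambda \in \{0, 1\}$, giving the same contradiction. Once this case is settled, the three cases together complete the lemma.
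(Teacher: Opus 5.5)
Your proof is correct and takes essentially the paper's approach. Both arguments write the third vector as an affine combination of the other two: you use $z-x=\lambda(y-x)$, while the paper uses $x_2=t\,x_1+(1-t)\,x_3$ after ruling out a vanishing coefficient. Both then pick a coordinate where the two reference vectors differ, which forces the parameter into $\{0,1\}$ and makes two of the vectors coincide. The appeals to Proposition~\ref{prop:affine-defect}~(3) and to the complementation symmetry are unnecessary, since you never use the relation form and you check both sign cases directly.
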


\begin{proof}
A family of one vector is trivially affinely independent.

For two distinct indicator vectors $x_1, x_2$, an affine dependence relation $\alpha_1 x_1 + \alpha_2 x_2 = 0$ with $\alpha_1 + \alpha_2 = 0$ gives $\alpha_1 (x_1 - x_2) = 0$. Since $x_1 \neq x_2$, $\alpha_1 = \alpha_2 = 0$ and the family is affinely independent.

Suppose three distinct indicator vectors $x_1, x_2, x_3$ are affinely dependent. We derive a contradiction. There exists a non-trivial $(\alpha_i)$ with $\sum_i \alpha_i x_i = 0$ and $\sum_i \alpha_i = 0$. If some $\alpha_i$ is zero, the relation reduces to a non-trivial affine dependence among the remaining two, contradicting the two-vector case. So we may assume $\alpha_i \neq 0$ for $i = 1, 2, 3$. From $\alpha_2 \neq 0$ and $\alpha_1 + \alpha_3 = -\alpha_2$,
$$x_2 = t\, x_1 + (1 - t)\, x_3, \quad t := -\alpha_1/\alpha_2.$$
Since $x_1 \neq x_3$, there is some coordinate $v$ at which $(x_1)_v$ and $(x_3)_v$ differ, one being $0$ and the other $1$. The value $(x_2)_v$ at this coordinate is $t$ or $1 - t$, but the indicator property forces $(x_2)_v \in \{0, 1\}$, so $t \in \{0, 1\}$. However, $t = 0$ gives $x_2 = x_3$ and $t = 1$ gives $x_2 = x_1$, both contradicting the distinctness of $x_1, x_2, x_3$.
\end{proof}

\begin{proposition}\label{prop:oh-star-minimality}
Let $\Hbb_o$ be an oriented hypergraph such that, in $F(\Hbb_o)$, all hyperedges share a common non-empty source-side vertex set $S$ and the target-side vertex sets $H_1, \ldots, H_r$ ($r = |Q_1|$) are distinct. Then the associated macrograph is a star, and $\delta(F(\Hbb_o)) > 0$ if and only if the indicator vectors $\mathbf{1}_{H_1}, \ldots, \mathbf{1}_{H_r}$ are affinely dependent over $\F$. In particular, $\delta(F(\Hbb_o)) > 0$ implies $r \geq 4$.
\end{proposition}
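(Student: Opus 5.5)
The plan is to reduce the statement to Corollary~\ref{cor:tensor-affine-defect} applied to the star macrograph, and then use Lemma~\ref{lem:01-affine} for the bound $r \geq 4$.

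\textbf{Step 1: the macrograph.} Put $w_0 := \mathbf{1}_S = \sum_{v \in S} v$, which is the common source tensor $A_e$. The target tensors are $w_i := \mathbf{1}_{H_i}$. Since the $H_i$ are distinct subsets, the $w_i$ are pairwise distinct elements of $T^1(\F^{Q_0})$.

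We also need $w_0 \neq w_i$, i.e.\ $S \neq H_i$. This holds because in an oriented hypergraph a vertex has a single entry $\mathbb{B}^{\mathrm{oh}}_{v,e}$, so $S \cap H_i = \emptyset$. Together with $S \neq \emptyset$, this gives $S \neq H_i$.

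Hence $\Vmacro = \{w_0, w_1, \ldots, w_r\}$ has $r+1$ elements. Every edge $e_i$ runs $w_0 \to w_i$, so $\Hmacro$ is a star centred at $w_0$, it is connected, and $\cmacro = 1$.

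\textbf{Step 2: apply Corollary~\ref{cor:tensor-affine-defect}.} Take the basepoint $r_C := w_0$. Then $\delta(F(\Hbb_o)) = \dim_\F \Ker(A_{\Hbb})$, where $A_{\Hbb}(\mathbf{1}_{w_i}) = w_i - w_0$. So $\delta > 0$ exactly when the differences $\mathbf{1}_{H_i} - \mathbf{1}_S$ ($1 \le i \le r$) are linearly dependent. It remains to compare this with affine dependence of $\mathbf{1}_{H_1},\ldots,\mathbf{1}_{H_r}$.

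\textbf{Step 3: the key point.} The comparison uses disjointness of supports.

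Suppose $\sum_i \alpha_i(\mathbf{1}_{H_i} - \mathbf{1}_S) = 0$. Pick $v \in S$ (possible since $S \neq \emptyset$). Because $v \notin H_i$ for every $i$, the $v$-coordinate of the relation gives $-\sum_i \alpha_i = 0$. Then $\sum_i \alpha_i \mathbf{1}_{H_i} = (\sum_i \alpha_i)\mathbf{1}_S = 0$, which is a nontrivial affine dependence.

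Conversely, if $\sum_i \alpha_i \mathbf{1}_{H_i} = 0$ with $\sum_i \alpha_i = 0$ and not all $\alpha_i$ zero, then $\sum_i \alpha_i(\mathbf{1}_{H_i} - \mathbf{1}_S) = 0$ is a nontrivial linear relation.

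This step is the main obstacle, since it is where $S \cap H_i = \emptyset$ and $S \neq \emptyset$ are needed. Without them, $\mathbf{1}_S$ could be absorbed and the equivalence could fail.

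\textbf{Step 4: $r \geq 4$.} By Lemma~\ref{lem:01-affine}, any family of at most three distinct indicator vectors is affinely independent. So $\delta > 0$ forces $r \geq 4$. Example~\ref{ex:oh-defect} then shows this bound is attained.
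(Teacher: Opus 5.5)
Your proof is correct and follows the paper's argument essentially step for step. You show the macrograph is a star with $|\Vmacro| = r+1$ and $\cmacro = 1$, reduce $\delta > 0$ to linear dependence of the differences $\mathbf{1}_{H_i} - \mathbf{1}_S$, use the disjointness of $S$ from every $H_i$ to turn this into affine dependence, and invoke Lemma~\ref{lem:01-affine} for $r \geq 4$. The differences are only cosmetic: you use Corollary~\ref{cor:tensor-affine-defect} where the paper uses the rank-drop form Corollary~\ref{cor:tensor-defect-rank-drop}, and you justify $S \cap H_i = \emptyset$ from the single entry $\mathbb{B}^{\mathrm{oh}}_{v,e}$, which the paper asserts without comment.
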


\begin{proof}
The source tensor of each hyperedge is $\mathbf{1}_S$, and the target tensor is $\mathbf{1}_{H_i}$. From $S \neq \emptyset$ and $S \cap H_i = \emptyset$, we have $\mathbf{1}_S \neq \mathbf{1}_{H_i}$. Since the $H_i$ are distinct, the associated macrograph is a star with centre $\mathbf{1}_S$ and $r$ leaves $\mathbf{1}_{H_i}$. Hence $|\Vmacro| = r + 1$, $\cmacro = 1$, and Corollary \ref{cor:tensor-defect-rank-drop} gives
\begin{align*}
\delta(F(\Hbb_o)) & = (|\Vmacro| - \cmacro) - \rank_{\F}(\partial_{\beta}) \\
& = r - \dim_{\F} \Span_\F\{\mathbf{1}_{H_i} - \mathbf{1}_S\mid i = 1, \ldots, r\}.
\end{align*}
Therefore $\delta(F(\Hbb_o)) > 0$ if and only if the $r$ vectors $\mathbf{1}_{H_i} - \mathbf{1}_S$ are linearly dependent over $\F$.

Consider a linear dependence relation $\sum_i \alpha_i (\mathbf{1}_{H_i} - \mathbf{1}_S) = 0$. Since $H_i \subseteq Q_0 \setminus S$, the support of $\sum_i \alpha_i \mathbf{1}_{H_i}$ lies in $Q_0 \setminus S$ and the support of $(\sum_i \alpha_i) \mathbf{1}_S$ lies in $S$. These are disjoint. Hence the relation is equivalent to
$$\sum_i \alpha_i \mathbf{1}_{H_i} = 0 \quad \text{and} \quad \sum_i \alpha_i = 0$$
(using $S \neq \emptyset$, so $\mathbf{1}_S \neq 0$), which is precisely an affine dependence relation among the indicator vectors $\mathbf{1}_{H_1}, \ldots, \mathbf{1}_{H_r}$. This establishes the equivalence of $\delta(F(\Hbb_o)) > 0$ and the affine dependence of $\{\mathbf{1}_{H_i}\}$.

By Lemma \ref{lem:01-affine}, any three or fewer distinct indicator vectors are affinely independent, so $r \geq 4$ is neccesary for affine dependence.
\end{proof}

The correspondence $F$ embeds oriented hypergraphs faithfully into Definition \ref{def:tensor-hg} (Remark \ref{rem:F-image}), but $F(\Hbb_o)$ generally does not follow a single standard construction and belongs to an extended class allowing linear combinations of vertex vectors. Conversely, our standard constructions employ higher-degree symmetric tensors $\Sym_k$ as boundaries and do not fit into the signed-scalar framework $\{-1, 0, +1\}$. The two theories differ in three respects: (i) the incidence data is restricted to signed scalars in oriented hypergraphs, while ours allows source and target tensors of arbitrary degree; (ii) this difference enables the direct treatment of multiset hyperedges via $\Sym_k$ in our framework; (iii) the oriented hypergraph cycle space is defined as the kernel of an incidence matrix, while $\Zsp(\Hbb)$ is the kernel of a difference map within the tensor algebra, with the two connected through observation maps (Section \ref{sec:observation}). Theorem \ref{thm:comparison-oh} and Example \ref{ex:oh-defect} concretely bridge this gap. A recent extension of oriented hypergraph theory in a different direction is given in \cite{Rusnak2025}, where the Harary--Sachs theorem is generalized to integer matrices via incidence-based cycle covers; this remains within the signed-scalar incidence regime and is complementary to the tensor-valued framework developed here.


\end{document}